\theoremstyle{definition}
\newtheorem{Def}{Definition}[subsection]
\newtheorem{Eg}{Example}[section]
\newtheorem{Rm}{Remark}[section]
\theoremstyle{plain}
\newtheorem{Prop}[Def]{Proposition}
\newtheorem{Lem}[Def]{Lemma}
\newtheorem{Thm}[Def]{Theorem}
\newtheorem{Cor}[Def]{Corollary}
\numberwithin{equation}{section}
\newcommand{\authorfootnotesA}{\renewcommand\thefootnote{$\flat$}}%
\newcommand{\authorfootnotesB}{\renewcommand\thefootnote{$\sharp$}}%
\newcommand{\authorfootnotesC}{\renewcommand\thefootnote{$\diamond$}}%
\renewcommand{\thefootnote}{\fnsymbol{footnote}}
\begin{document}
\begin{center}
	\LARGE 
	Convergence Implications via Dual Flow Method \par \bigskip

	\normalsize
	\authorfootnotesA
	Takafumi Amaba\footnote{The first author was supported by
	JSPS KAKENHI Grant Number 15K17562.}\textsuperscript{1},
	\authorfootnotesB
	Dai Taguchi\textsuperscript{2}
	and
	\authorfootnotesC
	G\^o Y\^uki\textsuperscript{3}
	\par \bigskip

	\textsuperscript{1,}\textsuperscript{2,}\textsuperscript{3}Ritsumeikan University,
	1-1-1 Nojihigashi, Kusatsu, Shiga, 525-8577, Japan\par \bigskip

	\email{(T. Amaba) fm-amaba@fc.ritsumei.ac.jp}
	\email{(D. Taguchi) dai.taguchi.dai@gmail.com}
	\email{(G. Y\^uki) goyuki@fc.ritsumei.ac.jp}

	\today

\begin{quote}{\small {\bf Abstract.}
	Given a one-dimensional stochastic differential equation,
	one can associate to this equation
	a stochastic flow
	on $[0,+\infty )$,
	which has an absorbing barrier at zero.
	Then one can define its dual stochastic flow.
	In \cite{AW}, Akahori and Watanabe showed that
	its one-point motion solves
	a corresponding stochastic differential equation
	of Skorokhod-type.
	In this paper, we consider a discrete-time stochastic-flow
	which approximates the original stochastic flow.
	We show that under some assumptions, one-point motions of its dual flow
	also approximates the corresponding reflecting diffusion.
	We investigate the relation between them
	in weak and strong approximation sense.
}
	\end{quote}
\if0 
\begin{quote}{\small {\bf Abstract.}
	Given a one-dimensional stochastic differential equation,
	one can associate to this equation
	a stochastic flow
	on $[0,+\infty )$,
	which has an absorbing barrier at zero.
	Then one can define the dual stochastic flow
	and this is known to be a stochastic flow
	whose one-point motion solves
	a corresponding stochastic differential equation
	of Skorokhod-type
	(Theorem \ref{Principle}).
	Therefore, if we have a sequence
	of discrete-time stochastic-flows
	approximating the original stochastic flow,
	the one-point motions of the discrete-time dual flows
	would be a natural candidate of approximation
	for the reflecting diffusion.
	We investigate the relation between their convergences
	in weak and strong sense.
}
	\end{quote}
\fi 

\end{center}

\footnote[0]{ 
2010 \textit{Mathematics Subject Classification}.
Primary 60H10; 
Secondary
60J60, 
60J25. 
}

\footnote[0]{ 
\textit{Key words and phrases}.
Stochastic-flow on $[0, +\infty )$,
Dual stochastic flow,
Siegmund's duality,
Absorbing diffusion,
Reflecting diffusion,
Euler-Maruyama approximation.
}

\section{Introduction}
\noindent
\underline{{\it Main result.}}

We consider an absorbing stochastic flow 
$\{ X_{s,t} \}_{s \leq t}$ 
associated with the following stochastic differential equation
(the precise definition and assumptions will be given in the subsection \ref{SFAB}):
\begin{equation}
\label{Intr-Abs} 
\mathrm{d} X_{t}
=
\sigma ( X_{t} ) \mathrm{d}w(t)
+
b ( X_{t} ) \mathrm{d}t ,
\quad
X_{0} > 0.
\end{equation}
One can define its {\it dual stochastic flow}
$\{ X_{s,t}^{*} \}_{s \leq t}$
by
$
X_{s,t}^{*} := X_{-t,-s}^{-1}
$
(the right-continuous inverse).
In \cite{AW}, Akahori and Watanabe showed that
the one-point motion of
$\{ X_{s,t}^{*} \}_{s \leq t}$
solves a corresponding Skorokhod-type stochastic differential equation. 
This fact will be recalled in Theorem \ref{Principle}.

In this paper, we introduce a discrete-time stochastic-flow
$\{ X_{k,l}^{n} \}_{k \leq l}$, $n \in \mathbb{N}$
constructed by the Euler-Maruyama type approximation for $\{ X_{s,t} \}_{s \leq t}$
(see Definition \ref{EM-AbsFl}).
Then the result of Akahori and Watanabe suggests that
the sequence of one-point motions of the dual flow
$\{ X_{k,l}^{n*} \}_{k \leq l}$
would be a natural candidate of the approximation for
solution to the Skorokhod-type stochastic differential equation.
Hence we are motivated to investigate
the relation between
convergences
$
\{ X_{k,l}^{n} \}_{k \leq l}
\to
\{ X_{s,t} \}_{s \leq t}
$
and
$
\{ X_{k,l}^{n*} \}_{k \leq l}
\to
\{ X_{s,t}^{*} \}_{s \leq t}
$.

The next theorem says that
to know the rate of convergence for
$
\vert
\mathbf{E} [ f(X_{0,T}^{*}(x)) ]
-
\mathbf{E} [ f(X_{0,n}^{n*}(x)) ]
\vert
$,
it suffices to investigate the estimate for
$
\vert
\mathbf{P} ( X_{T}(y) > x )
-
\mathbf{P} ( X_{T}^{n}(y) > x )
\vert
$,
{\it with clarifying the dependence on the initial point $y>0$}.

\begin{Thm} 
\label{intro_main1} 
Assume the conditions (i)--(iv) in the subsection \ref{SFAB}.
Let $f:[0, +\infty ) \to \mathbb{R}$ be a
differentiable function with $f(0)=0$
and with compact support.
For $x > 0$, we have
\begin{equation*}
\begin{split}
&
\mathbf{E} [ f( X_{0,T}^{*}(x) ) ]
-
\mathbf{E} [ f( X_{0,n}^{n*}(x) ) ] \\
&=
\int_{0}^{+\infty}
f^{\prime} (y)
\left(
	\mathbf{P} ( X_{0,n}^{n} (y) > x )
	-
	\mathbf{P} ( X_{0,T} (y) > x )
\right)
\mathrm{d}y .
\end{split}
\end{equation*}
\end{Thm}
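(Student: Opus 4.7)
The plan is to exploit the Siegmund-type duality built into the definition $X_{s,t}^{*} = X_{-t,-s}^{-1}$, combined with a Fubini-type representation of $\mathbf{E}[f(\cdot)]$ that uses the hypothesis $f(0)=0$ to rewrite $f$ as an integral of $f'$. First, since $f(0)=0$ and $f$ is differentiable with compact support, the fundamental theorem of calculus gives $f(z)=\int_0^{+\infty} f'(y)\mathbf{1}_{\{y<z\}}\,\mathrm{d}y$ for every $z\geq 0$. Setting $z=X_{0,T}^{*}(x)$ and $z=X_{0,n}^{n*}(x)$, taking expectations, and applying Fubini (legitimate because $f'$ is bounded with compact support) yields
\[
\mathbf{E}[f(X_{0,T}^{*}(x))] - \mathbf{E}[f(X_{0,n}^{n*}(x))] = \int_0^{+\infty} f'(y)\bigl(\mathbf{P}(X_{0,T}^{*}(x)>y) - \mathbf{P}(X_{0,n}^{n*}(x)>y)\bigr)\,\mathrm{d}y.
\]

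The core step is Siegmund's duality for the two flows, which I claim takes the form
\[
\mathbf{P}(X_{0,T}^{*}(x)>y) = 1 - \mathbf{P}(X_{0,T}(y)>x) \quad \text{and} \quad \mathbf{P}(X_{0,n}^{n*}(x)>y) = 1 - \mathbf{P}(X_{0,n}^{n}(y)>x).
\]
For the continuous flow, since $X_{0,T}^{*}(x)=X_{-T,0}^{-1}(x)$ is the right-continuous inverse of the non-decreasing map $y\mapsto X_{-T,0}(y)$, one has the pathwise identity $\{X_{-T,0}^{-1}(x)>y\}=\{X_{-T,0}(y)\leq x\}$. Taking probabilities, passing to the complementary event, and invoking temporal stationarity of the SDE (so that $X_{-T,0}(y)\stackrel{d}{=}X_{0,T}(y)$) yields the first displayed identity. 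The discrete identity is proved analogously, using monotonicity of the Euler--Maruyama map $y\mapsto X_{0,n}^{n}(y)$ in its initial condition, time-homogeneity of the scheme, and the definition of the discrete dual flow via a right-continuous inverse.

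Substituting these two identities into the preceding integral, the additive $1$'s cancel, the signs flip, and one arrives at the stated formula. The main obstacle I anticipate is the Siegmund step for the discrete flow. Pathwise monotonicity of $y\mapsto X_{0,n}^{n}(y)$ does not hold for an arbitrary Euler--Maruyama scheme and typically requires a smallness condition on the step size relative to the Lipschitz constants of $\sigma$ and $b$; this must be extracted from the explicit conditions (i)--(iv) of Subsection~\ref{SFAB} and the particular form of $X^{n}_{k,l}$ given in Definition~\ref{EM-AbsFl}. Once monotonicity is in hand, the right-continuous-inverse formalism reproduces the continuous argument line for line, and the remaining algebra is straightforward.
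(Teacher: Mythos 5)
Your proof is correct and follows essentially the same path as the paper's: you use the layer-cake/fundamental-theorem representation $f(z)=\int_0^\infty f'(y)\mathbf{1}_{\{y<z\}}\,\mathrm{d}y$ plus Fubini, which is the same computation as the paper's integration by parts for the Lebesgue--Stieltjes integral in Proposition~\ref{Formula-Weak}, followed by the same Siegmund-duality step via right-continuous inverses (Propositions~\ref{seesaw} and~\ref{S-duality}) and stationarity. The one small slip is calling $\{X_{-T,0}^{-1}(x)>y\}=\{X_{-T,0}(y)\le x\}$ a \emph{pathwise} identity: only the inclusions $\{\varphi(y)<x\}\subset\{\varphi^{-1}(x)>y\}\subset\{\varphi(y)\le x\}$ hold pathwise, and equality in probability requires $\mathbf{P}(X_{-T,0}(y)=x)=0$, a hypothesis the paper makes explicit in Theorem~\ref{Weak_Err} and which you should acknowledge you are invoking.
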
 
This theorem is valid for
more generic
{\it stochastic flows on $[0, +\infty )$ in discrete-time}
(see Definition \ref{discrete-SF})
and will be proved in Section \ref{Conv>>W}
(see Theorem \ref{Weak_Err}).

For the strong error, we have the following:
\begin{Thm} 
\label{intro_main2} 
Assume the conditions (i)-(iv) in the subsection \ref{SFAB}.
If further $\sigma$ and $b$ are smooth
then $X_{0,T}^{*}$ is absolutely continuous
and we have for each $K > 0$,
\begin{equation*}
\begin{split}
&
\mathbf{E}
\left[
\sup_{0 \leq x \leq K}
\vert X_{0,T}^{*} (x) - X_{0,n}^{n*} (x) \vert
\right] \\
&\leq
\mathbf{E}
\left[
\left(
	1 + \sup_{0 \leq x \leq K} \vert ( X_{-T,0}^{*} )^{\prime} (x) \vert
\right)
\sup_{ 0 \leq x \leq m }
\vert
	X_{0,T} (x) - X_{0,n}^{n} (x)
\vert
\right] ,
\end{split}
\end{equation*}
where
$
m:= \min \{ X_{-T,0}^{*} (K), X_{-n,0}^{n*} (K) \}
$.
\end{Thm}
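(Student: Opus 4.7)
The plan is to reduce the claimed inequality to a deterministic pathwise comparison between the inverses of two monotone functions and then to estimate that by the fundamental theorem of calculus applied to the smooth inverse $\Phi^{-1}$, where $\Phi := X_{0,T}$.

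First, I unfold the definitions. By $X_{s,t}^{\ast} = X_{-t,-s}^{-1}$, the quantities on the left are $X_{0,T}^{\ast}=X_{-T,0}^{-1}$ and $X_{0,n}^{n\ast}=(X_{-n,0}^{n})^{-1}$, while on the right $X_{-T,0}^{\ast}=X_{0,T}^{-1}=\Phi^{-1}$ and $X_{-n,0}^{n\ast}=(X_{0,n}^{n})^{-1}=\Psi^{-1}$, where $\Psi := X_{0,n}^{n}$. The time-stationarity of the flow gives $(X_{-T,0},X_{-n,0}^{n})\stackrel{d}{=}(\Phi,\Psi)$ jointly, so the theorem reduces to the deterministic pathwise statement
\[
\sup_{x\in[0,K]}|\Phi^{-1}(x)-\Psi^{-1}(x)| \leq \Bigl(1+\sup_{[0,K]}(\Phi^{-1})'\Bigr)\sup_{[0,m]}|\Phi-\Psi|
\]
with $m=\min\{\Phi^{-1}(K),\Psi^{-1}(K)\}$. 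Under smoothness of $\sigma$ and $b$, the classical theory of differentiability in the initial condition gives that $\Phi$ is a.s.\ $C^{\infty}$ and strictly increasing in $x$; hence $\Phi^{-1}$ is $C^{1}$ with $(\Phi^{-1})'(y)=1/\Phi'(\Phi^{-1}(y))$, which in particular justifies the absolute continuity of $X_{0,T}^{\ast}$.

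For the pathwise inequality, fix $x\in[0,K]$, set $u=\Phi^{-1}(x)$, $v=\Psi^{-1}(x)$ and, by symmetry, assume $u\leq v$. The central identity is
\[
v-u = \Phi^{-1}(\Phi(v))-\Phi^{-1}(x) = \int_{x}^{\Phi(v)}(\Phi^{-1})'(y)\,\mathrm{d}y.
\]
If $v\leq m$ (equivalently $\Phi(v)\leq K$), the integral is bounded directly by $\sup_{[0,K]}(\Phi^{-1})'\cdot|\Phi(v)-\Psi(v)|\leq\sup_{[0,K]}(\Phi^{-1})'\cdot\sup_{[0,m]}|\Phi-\Psi|$. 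If $v>m$, necessarily $m=\Phi^{-1}(K)$, and one splits the integral at $y=K$: the piece over $[x,K]$ is bounded by $\sup_{[0,K]}(\Phi^{-1})'\cdot(K-x)$ using the estimate $K-x=\Phi(m)-\Psi(v)\leq(\Phi-\Psi)(m)\leq\sup_{[0,m]}|\Phi-\Psi|$, while the piece over $[K,\Phi(v)]$ equals exactly $v-m$.

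The main obstacle is then to absorb this residual $v-m$ into the ``$1\cdot\sup_{[0,m]}|\Phi-\Psi|$'' summand, without recourse to $(\Psi^{-1})'$ (which, for the Euler scheme $\Psi$, need not be uniformly bounded). The key identity is
\[
0 \leq \Psi(v)-\Psi(m) = (\Phi-\Psi)(m)-(K-x) \leq \sup_{[0,m]}|\Phi-\Psi|,
\]
from which, using monotonicity of $\Psi$ together with a careful comparison to $\Phi$ near $m$, one extracts a bound on $v-m$ of the desired form. Once the pathwise inequality is secured uniformly in $\omega$, taking expectation and retracing the identifications from the first step yields the theorem.
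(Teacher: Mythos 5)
Your opening reduction via stationarity to a deterministic pathwise estimate is the same step the paper performs implicitly, but your main idea --- integrate $(\Phi^{-1})'$ from $x$ to $\Phi(v)$ --- is a genuinely different route from the paper's: the paper chains Propositions \ref{LevKol}, \ref{KolLev} and \ref{RegInv} through a L\'evy-type quantity $\rho_K$ rather than integrating directly. Your Case~1 is essentially sound, modulo one fix: since $\Psi\in\mathcal{T}$ may have jumps, $\Psi(v)$ should be replaced by the left limit $\Psi(v-)\leq x$, which together with the continuity of $\Phi$ gives $\Phi(v)-x\leq\sup_{[0,m]}|\Phi-\Psi|$.

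Case~2, however, contains a genuine gap. The step you yourself flag as ``the main obstacle'' --- absorbing the residual $v-m$ into $1\cdot\sup_{[0,m]}|\Phi-\Psi|$ --- is left unproved (``one extracts a bound on $v-m$ of the desired form'' is an assertion, not an argument), and for general $\Phi,\Psi\in\mathcal{T}$ the intended bound is simply false. Take $\Phi=\mathrm{id}$ and $\Psi(y)=y$ on $[0,1]$, $\Psi\equiv 1$ on $[1,A]$, $\Psi(y)=y-A+1$ for $y>A$ (with $A>1$), and $K=1$. Then $m=\min\{\Phi^{-1}(1),\Psi^{-1}(1)\}=1$ and $\sup_{[0,m]}|\Phi-\Psi|=0$, while at $x=K$ one finds $v-m=\Psi^{-1}(1)-\Phi^{-1}(1)=A-1$, arbitrarily large. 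The failure occurs exactly when $\Psi$ is flat on an interval whose left endpoint is $\Phi^{-1}(K)$, i.e.\ when $K$ is a discontinuity of $\Psi^{-1}$. Your ``key identity'' $0\leq\Psi(v)-\Psi(m)\leq\sup_{[0,m]}|\Phi-\Psi|$ is correct but gives no control on $v-m$ without a lower bound on the modulus of increase of $\Psi$ --- precisely the control on $\Psi^{-1}$ you said you wanted to avoid. (This edge case is delicate for Proposition \ref{inversion_reg} as stated too; in the stochastic setting one must argue that the Euler flow $\Psi=X_{0,n}^n$ is flat only at the value $0$, so that $\Psi^{-1}$ jumps only at $x=0$ and the degeneracy is a.s.\ avoided for a fixed $K>0$ --- but you do not make that argument, and it is needed to close Case~2.)

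A secondary inaccuracy: you assert that the absorbing flow $\Phi=X_{0,T}$ is a.s.\ $C^{\infty}$ and strictly increasing with $\Phi^{-1}\in C^1$. By Lemma \ref{excit}, $\Phi$ is identically zero on some $[0,x_0)$ with $x_0>0$ and is a diffeomorphism only on $(x_0,\infty)$; in particular $(\Phi^{-1})'$ may blow up as its argument tends to $0$. This is why the paper isolates Lemma \ref{abs_conti} and proves absolute continuity carefully there, rather than invoking a global diffeomorphism property; your sketch should do the same rather than rely on strict monotonicity of $\Phi$.
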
 

This will be proved in Subsection \ref{CISE}.

\noindent
\underline{{\it Historical Background.}}

In \cite{Lev}, L\'{e}vy found the duality relation between
laws of the absorbing Brownian motion
$\{ \mathbf{P}_{x} \}_{x \geq 0}$
and the reflecting Brownian motion
$\{ \widehat{\mathbf{P}}_{y} \}_{y \geq 0}$
(both of their barriers are at zero):
\begin{equation*}
\begin{split}
\mathbf{P}_{x}
(
	X_{t} > y
)
=
\widehat{\mathbf{P}}_{y}
(
	X_{t} < x
),
\quad
\text{$x$, $y \geq 0$,}
\end{split}
\end{equation*}
where $X=(X_{t})_{t \geq 0}$ is the canonical process.
This is a typical form of the celebrated
{\it Siegmund's duality}
for absorbing and reflecting diffusions (\cite{Si}). 
This duality is usually formulated for two different diffusion measures
or their generators.

There is a lot of studies related to this kind of dualities.
An explicit appearance seems to date back at least to Karlin and McGregor's result (\cite[Section 6]{KMc}). 
The duality was observed between birth process and death process with interchanging their rates.
For other results, see \cite{CR}, \cite{DF}, \cite{DFPS} and \cite{Ja}.
We also refer the reader to \cite{JK} for a survey of this field.

We would like to note that notions of
Siegmund's duality also appear (with slight adaptive modifications) independently  in the study
of interacting particle systems.
For example,
in \cite[equation (3.18)]{Sp},
\cite{HL}
and
\cite[and see references therein]{Ha76},
the term ``associate" was used instead of ``dual".
It is worth referring to the idea of dual flow
appears in \cite{Wa07}.
In \cite{Wa07}, Warren explained
a duality between coalescing Brownian motions and
interlaced Brownian motions via dual flows,
in the study of Dyson's Brownian motion.

There are also sample paths approaches to Siegmund's duality. 
As was studied in \cite{ClSu} and \cite{Ha78},
it is described as follows.
$$
\mathbf{P}
( X_{t} > y \vert X_{0}=x )
=
\mathbf{P}
( \widehat{X}_{t} < x \vert \widehat{X}_{0}=y ) ,
\quad
\text{$x$, $y \geq 0$,}
$$
where $X$ and $\widehat{X}$ are Markov processes
defined on the same probability space.
Furthermore, as considered in \cite{AW} and \cite{WW},
this duality has a nice description
within the context of stochastic flows
associated with stochastic differential equations.
More precisely, for a given stochastic flow $\{ X_{s,t} \}_{s \leq t}$,
we can consider its
{\it dual stochastic flow} $\{ X_{s,t}^{*} \}_{s \leq t}$,
defined by
$
X_{s,t}^{*} := X_{-t, -s}^{-1}
$
(right-continuous inverse).
This might be a simple realization of
the so-called ``{\it graphical representation}"
in the field of dualities for Markov processes.

As explained in the beginning,
the dual flow $\{ X_{s,t}^{*} \}_{s \leq t}$
is the reflecting stochastic flow corresponding to a
Skorokhod-type stochastic differential equation 
under some assumptions on $\sigma$ and $b$.
Especially, what matters is that this method induces a transformation
for stochastic differential equations
describing the absorbing diffusion
and its dual reflecting diffusion.
Actually, the driving Wiener process $w$ of (\ref{Intr-Abs})
is transformed into
the time-reversal Wiener process $\widehat{w}$ which is the driving process for the corresponding
Skorokhod-type stochastic differential equation.
Hence we can deal with these two
stochastic differential equations
on the {\it same} probability space.
The dual flow method gives us
an algebraic-like approach to the pathwise uniqueness 
for Skorokhod-type stochastic differential equations.
This approach is completely different to existing literatures
such as \cite{LS}, \cite{Sa}, \cite{Ta} and so forth.
Indeed, in \cite{AW}, Akahori and Watanabe applied this method successfully
to prove the pathwise uniqueness 
for a large class of Skorokhod-type stochastic differential equations.
Although our stochastic flows are not diffeomorphisms,
we can partially apply Kunita's theory of stochastic flows of diffeomorphisms \cite{Ku}.
\\

\noindent
\underline{{\it The organization of this paper.}}

In Section 2, we introduce the notion of
stochastic flows on $[0, +\infty )$,
the dual stochastic flows and recall the duality
between them
in the context of stochastic differential equations 
(Theorem \ref{Principle}). 
We give a proof of this theorem in Appendix \ref{Appdx2}
in the case where
$\sigma \vert_{(0,+\infty)}$
and
$b \vert_{(0,+\infty)}$
are bounded.
The proof of Proposition \ref{Duality} is a bit complicated, 
and hence we put Proposition \ref{app-prop} in Appendix \ref{Appdx}
to prove that.
Furthermore, the proof of Proposition \ref{Duality}
uses also results from Subsection \ref{RC-inv_reg},
which can be read independently of Section 2.

In Section 3, we introduce the notion of
stochastic flows on $[0,+\infty )$ in discrete-time
and establish a discrete analogue of Theorem \ref{Principle}.

Section 4 and 5 are devoted to investigate the
convergence implications for
the weak error and strong error respectively.

\section{The duality relation}

Let
$w=(w(t))_{t \in \mathbb{R}}$ be a one-dimensional Wiener process.
For each $s \in \mathbb{R}$, we define
$w^{(s)} := ( w^{(s)} (t) )_{t\geq s}$
where
$w^{(s)} (t) := w(t) - w(s)$.

We denote by
$\mathcal{F}_{s,t}^{w}$
and 
$\mathcal{F}_{-\infty , t}^{w}$
the completion of
$\sigma ( w(u) - w(v) : s \leq v \leq u \leq t )$
and
$\vee_{-\infty < s \leq t} \mathcal{F}_{s,t}^{w}$,
respectively.

Along \cite{AW}, we review a duality relation between 
absorbing stochastic flows
and reflecting stochastic flows
associated with stochastic differential equations.
For this, we start with notions of
stochastic flows on $[0, +\infty )$.

\subsection{Stochastic flow on $[0, +\infty)$ and its dual stochastic flow}

Let $\mathcal{T}$ be the set of all non-decreasing and right-continuous
functions $\varphi : [0, +\infty ) \to [0, +\infty)$.
For a convention,
we define $\varphi (0-) := 0$ for $\varphi \in \mathcal{T}$.
The space $\mathcal{T}$ is a Polish space
under the metric
$
\rho ( \varphi , \psi )
:=
\sum_{n=1}^{\infty}
2^{-n} \min \{ \rho_{n} ( \varphi , \psi ) , 1 \}
$,
where
$$
\rho_{K} ( \varphi , \psi )
:=
\inf
\Big\{
	\varepsilon > 0:
	\begin{array}{c}
	\text{$
		\varphi ( x-\varepsilon ) - \varepsilon
		\leq
		\psi (x) \leq \varphi (x+\varepsilon ) + \varepsilon
	$} \\
	\text{for all $x \in [0,K]$}
	\end{array}
\Big\} ,
$$
for $K > 0$ and $\varphi$, $\psi \in \mathcal{T}$.
The space $\mathcal{T}$ is endowed with a semigroup structure
$
\mathcal{T} \times \mathcal{T} \ni ( \varphi , \psi )
\mapsto
\varphi \circ \psi \in \mathcal{T}
$.
We remark here that this operation and each evaluation map
$
\mathcal{T} \times [0, +\infty ) \ni ( \varphi , x )
\mapsto
\varphi (x) \in [0,+\infty )
$
are measurable but not continuous.

\begin{Def} 
\label{Def:Flow}
A family $\{ X_{s,t} \}_{s \leq t}$
of $\mathcal{T}$-valued random variables
is called a {\it stochastic flow} on $[0, +\infty )$
if it satisfies the following.
\begin{itemize}
\item[(i)]
(Flow property)
$
X_{s,u} = X_{t,u} \circ X_{s,t}
$
and
$X_{t,t} = \mathrm{id}_{[0,+\infty)}$
almost surely
for every $s \leq t \leq u$.

\item[(ii)]
(Independence)
For any sequence $t_{0} \leq t_{1} \leq \cdots \leq t_{n}$,
the $\mathcal{T}$-valued random variables
$\{X_{t_{k-1}, t_{k}}\}_{k=1}^{n}$ are independent.

\item[(iii)]
(Stationarity)
$X_{s,t} = X_{s+h, t+h}$ in law for each $s \leq t$ and $h >0$.

\item[(iv)]
(Stochastic continuity)
$X_{0,h} \to \mathrm{id}_{[0,+\infty )}$
in probability as $h \downarrow 0$.
\end{itemize}
\end{Def} 

For $\varphi\in\mathcal{T}$, 
we set
$
D( \varphi )
=
\{ x \geq 0 : \varphi (x-) \neq \varphi (x) \}
$
and
$$
R( \varphi )
:=
\Big\{
	\varphi (y):
	\begin{array}{c}
	\text{$y \geq 0$ and there exists $z > y$} \\
	\text{such that $\varphi (y) = \varphi (z)$}
	\end{array}
\Big\} .
$$
We give a sufficient condition to construct the dual flow of $\{ X_{s,t} \}_{s \leq t}$.

\begin{Prop} 
\label{Duality} 
Let
$\{ X_{s,t} \}_{s \leq t}$
be a stochastic flow on $[0,+\infty )$.
For $s \leq t$,
we define
$
X_{s,t}^{*} : [0, +\infty ) \to [0,+\infty )
$
by
$$
X_{s,t}^{*} (x)
:=
( X_{-t,-s} )^{-1} (x)
:=
\inf \{ y \in [0, +\infty ) : X_{-t,-s}(y) > x \} .
$$
If for each $s\leq t\leq u$,
$
\lim_{x \to +\infty} X_{s,t} (x) = +\infty
$
and
\begin{equation}
\label{RD-condi} 
R(X_{s,t}) \cap D(X_{t,u}) = \emptyset
\quad
\end{equation}
almost surely,
then $\{ X_{s,t}^{*} \}_{s \leq t}$ is a stochastic flow
on $[0, +\infty )$
and for each $s \leq t$,
$(X_{s,t}^{*})^{*} = X_{s,t}$
holds almost surely.
\end{Prop}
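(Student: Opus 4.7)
The plan is to verify the four flow axioms for $\{X^*_{s,t}\}_{s\leq t}$ and then the involution $(X^*_{s,t})^* = X_{s,t}$. I would first spell out a deterministic preliminary: for any non-decreasing right-continuous $\varphi:[0,+\infty)\to[0,+\infty)$ with $\lim_{x\to+\infty}\varphi(x) = +\infty$, the right-continuous inverse $\varphi^{-1}$ belongs to $\mathcal{T}$, and the inversion map $\varphi\mapsto\varphi^{-1}$ is measurable on this subset of $\mathcal{T}$ (this is the content of Subsection~\ref{RC-inv_reg}, which the statement explicitly allows us to invoke). Applied pointwise to $\varphi = X_{-t,-s}$, this immediately gives $X^*_{s,t}\in\mathcal{T}$ almost surely.

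The heart of the argument is a deterministic algebraic lemma: if $\varphi,\psi\in\mathcal{T}$ both tend to $+\infty$ and satisfy $R(\varphi)\cap D(\psi) = \emptyset$, then
\begin{equation*}
(\psi\circ\varphi)^{-1} = \varphi^{-1}\circ\psi^{-1}.
\end{equation*}
This is precisely the statement that the paper defers to Proposition~\ref{app-prop} in Appendix~\ref{Appdx}. The condition rules out the pathological scenario in which $\psi$ has a jump whose interval straddles a height $\varphi(y_0)$ at which $\varphi$ is constant on some interval $[y_0,y_0')$; in that scenario $(\psi\circ\varphi)^{-1}$ and $\varphi^{-1}\circ\psi^{-1}$ would select different endpoints of a flat piece, and the identity would fail. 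This case analysis is the main obstacle in the whole proof.

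Granted this lemma, axiom (i) is immediate. Applying the flow property of $\{X_{s,t}\}$ to the ordered triple $-u\leq -t\leq -s$ gives $X_{-u,-s} = X_{-t,-s}\circ X_{-u,-t}$; the hypothesis (\ref{RD-condi}) at the same indices gives $R(X_{-u,-t})\cap D(X_{-t,-s}) = \emptyset$ almost surely, so the lemma yields
\begin{equation*}
X^*_{s,u} = \bigl(X_{-t,-s}\circ X_{-u,-t}\bigr)^{-1} = (X_{-u,-t})^{-1}\circ (X_{-t,-s})^{-1} = X^*_{t,u}\circ X^*_{s,t}
\end{equation*}
almost surely. Axioms (ii) and (iii) then follow from the fact that $X^*_{s,t}$ is a measurable function of $X_{-t,-s}$: independence of $\{X_{-t_k,-t_{k-1}}\}_k$ descends to independence of $\{X^*_{t_{k-1},t_k}\}_k$, and the stationarity $X_{-t,-s}\stackrel{d}{=}X_{-t-h,-s-h}$ transports to $X^*_{s,t}\stackrel{d}{=}X^*_{s+h,t+h}$. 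Axiom (iv) follows from continuity of the inversion map at $\mathrm{id}_{[0,+\infty)}$ (again from Subsection~\ref{RC-inv_reg}) applied to $X_{-h,0}\to\mathrm{id}_{[0,+\infty)}$ in probability.

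Finally, $(X^*_{s,t})^* = X_{s,t}$ reduces to the pointwise involution $(\varphi^{-1})^{-1} = \varphi$ valid for $\varphi\in\mathcal{T}$ with $\varphi(+\infty)=+\infty$, a direct definition chase on right-continuous inverses (no hypothesis of the form (\ref{RD-condi}) is needed here). The only truly nontrivial ingredient is the inversion-of-composition lemma under $R(\varphi)\cap D(\psi) = \emptyset$; every other step is routine inheritance from $\{X_{s,t}\}$ through the measurable inversion operation.
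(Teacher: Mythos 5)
Your proposal is correct and follows essentially the same route as the paper: axiom (i) is reduced to the deterministic composition-inversion identity $(\psi\circ\varphi)^{-1}=\varphi^{-1}\circ\psi^{-1}$ under $R(\varphi)\cap D(\psi)=\emptyset$, which is exactly what the paper extracts from Proposition~\ref{app-prop}; axioms (ii) and (iii) are inherited through measurability of inversion; axiom (iv) comes from the Lipschitz-type continuity of inversion at $\mathrm{id}$ proved in Subsection~\ref{RC-inv_reg}. One small point in your favor: you explicitly handle $(X_{s,t}^{*})^{*}=X_{s,t}$ via the elementary fact $(\varphi^{-1})^{-1}=\varphi$ for $\varphi\in\mathcal{T}$ tending to $+\infty$, whereas the paper's own proof of Proposition~\ref{Duality} states this in the proposition but then silently omits it.
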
 

\begin{proof}
From the definition of 
$\{ X_{s,t}^{*} \}_{s \leq t}$,
(ii) and (iii) of Definition \ref{Def:Flow} are clearly satisfied.
We show that $\{ X_{s,t}^{*} \}_{s \leq t}$ satisfies (i) and (iv) of Definition \ref{Def:Flow}.

We first show that (i).
$X_{t,t}^{*} = \mathrm{id}_{[0, +\infty)}$ is obvious from the definition of $X_{t,t}^{*}$.
Furthermore, from the right-continuity of $\{ X_{s,t} \}_{s \leq t}$,
it is enough to prove that
$$
X_{s,u}^{-1} (x) = X_{s,t}^{-1} \circ X_{t,u}^{-1} (x)
\quad
\text{almost surely}
$$
for $x \geq 0$ and $s \leq t \leq u$.
To prove this, we will use Proposition \ref{app-prop} in Appendix.
Proposition \ref{app-prop}-(ii) implies that
$
X_{s,u}^{-1} (x) \leq X_{s,t}^{-1} \circ X_{t,u}^{-1} (x)
$.
Now we suppose that
\begin{equation}
\label{contra-hyp} 
X_{s,u}^{-1} (x) < X_{s,t}^{-1} \circ X_{t,u}^{-1} (x).
\end{equation}
Then from Proposition \ref{app-prop}-(iii), we have 
\begin{equation}
\label{stay} 
X_{t,u}^{-1} ( x )
=
X_{s,t}( X_{s,u}^{-1} (x) )
=
X_{s,t} ( X_{s,t}^{-1} \circ X_{t,u}^{-1} (x) - ) .
\end{equation}
Now \eqref{contra-hyp} and \eqref{stay} imply that
$
X_{t,u}^{-1} (x) \in R( X_{s,t} )
$.
Therefore, from \eqref{RD-condi}, 
we see that
$
X_{t,u}^{-1} (x)
$
is a left-continuous point of $X_{t,u}$.
This contradicts to Proposition \ref{app-prop}-(iv).
Hence we can conclude that
$
X_{s,u}^{-1} (x) = X_{s,t}^{-1} \circ X_{t,u}^{-1} (x)
$.

Now we turn to prove (iv).
Let $K$ and $h$ be arbitrary positive numbers.
By Proposition \ref{inversion_reg} and \ref{LevKol},
we have
\begin{equation*}
\begin{split}
&
\rho_{K} ( \mathrm{id}_{[0, +\infty)}, X_{0,h}^{*} )
\leq
2
\sup_{0 \leq x \leq m}
\vert
	x - X_{-h,0}(x)
\vert ,
\end{split}
\end{equation*}
where $m := \min \{ K, X_{-h,0}(K)  \} \leq K$.
Moreover, by Proposition \ref{KolLev}, we have
\begin{equation*}
\begin{split}
&
\sup_{0 \leq x \leq m}
\vert
	x - X_{-h,0} (x)
\vert
\leq
\sup_{0 \leq x \leq K}
\vert
	x - X_{-h,0} (x)
\vert
\leq
2
\rho_{K} ( \mathrm{id}_{[0,+\infty)} , X_{-h,0} ) .
\end{split}
\end{equation*}
Therefore, we obtain that
\begin{equation*}
\begin{split}
\rho_{K} ( \mathrm{id}_{[0, +\infty)}, X_{0,h}^{*} )
\leq
4
\rho_{K} ( \mathrm{id}_{[0,+\infty)} , X_{-h,0} ) .
\end{split}
\end{equation*}
Now the stationarity for $\{ X_{s,t} \}_{s \leq t}$ yields that for any $\varepsilon > 0$,
\begin{equation*}
\begin{split}
&
\mathbf{P}
\big(
	\rho_{K} ( \mathrm{id}_{[0, +\infty)}, X_{0,h}^{*} ) > \varepsilon
\big) \\
&\leq
\mathbf{P}
\big(
	\rho_{K} ( \mathrm{id}_{[0, +\infty)}, X_{-h,0} ) > \varepsilon /4
\big) \\
&=
\mathbf{P}
\big(
	\rho_{K} ( \mathrm{id}_{[0, +\infty)}, X_{0,h} ) > \varepsilon /4
\big)
\end{split}
\end{equation*}
holds. 
Since the last term of the above converges to zero as $h$ tends to zero, 
we obtain the desired result.
\end{proof} 

\begin{Def} 
We call $\{ X_{s,t}^{*} \}_{s \leq t}$
the {\it dual stochastic flow} of $\{ X_{s,t} \}_{s \leq t}$.
\end{Def} 

\subsection{Stochastic flow with a reflecting barrier}
Let $\sigma$ and $b$ be Borel-measurable functions on $[0, +\infty )$. 
We consider Skorokhod-type stochastic differential equations
\begin{equation}
\label{SKR} 
\begin{split}
\left\{\begin{array}{l}
\mathrm{d}X_{t}
=
\sigma ( X_{t} ) \mathrm{d} w^{(s)}(t)
+
b ( X_{t} ) \mathrm{d}t
+
\mathrm{d} \phi (t),
\quad
t \geq s \\
X_{s} = x ,\quad x\geq 0.
\end{array}\right.
\end{split}
\end{equation}

We assume that the pathwise uniqueness holds for \eqref{SKR}.
Then for each $s \leq t$, we can define
$
X_{s,t} (x) := X_{t}
$,
where $(X_{u})_{u \geq s}$ is a unique strong solution to
\eqref{SKR} with $X_{s}=x$.
It is clear that each $X_{s,t}$ takes value in $\mathcal{T}$
and $\{ X_{s,t} \}_{s \leq t}$ forms a stochastic flow on $[0, +\infty )$.
We call this the
{\it $(\sigma , b, w)$-stochastic flow with the reflecting barrier at zero}.

\subsection{Stochastic flow with an absorbing barrier}
\label{SFAB} 
Let $\sigma$ and $b$ be Borel-measurable functions on $[0, +\infty )$. 
We assume the following:
\begin{itemize}
\item[(i)]
$\sigma (x) > 0$ for $x \in (0, +\infty )$.

\item[(ii)]
$\sigma \vert_{(0, +\infty)}$ and 
$b \vert_{(0, +\infty)}$
belong to
$C^{2}(0, +\infty )$
and their first derivatives are bounded on $[1, +\infty )$.

\item[(iii)]
\begin{equation}
\label{BRY} 
\int_{0+}^{1}
\left(
\frac{ 1 }{ \sigma (x)^{2} }
\exp
\left(
	- \int_{x}^{1}
	\frac{ 2 b(y) }{ \sigma (y)^{2} }
	\mathrm{d}y
\right)
+
\exp
\left(
	\int_{x}^{1}
	\frac{ 2 b(y) }{ \sigma (y)^{2} }
	\mathrm{d}y
\right)
\right)
\mathrm{d}x
< +\infty .
\end{equation}

\item[(iv)]
The condition (\ref{BRY}) still holds if we replace $b$ with $\widehat{b}$,
where
$$
\widehat{b}(x)
:=
\sigma (x) \sigma^{\prime} (x) - b(x).
$$
\end{itemize}

Under these assumptions,
we consider the stochastic differential equation:
\begin{equation}
\label{ORD} 
\begin{split}
\left\{\begin{array}{l}
\mathrm{d}X_{t}
=
\sigma ( X_{t} ) \mathrm{d}w^{(s)}(t)
+
b ( X_{t} ) \mathrm{d}t,
\quad
t \geq s, \\
X_{s} = x ,\quad x > 0.
\end{array}\right.
\end{split}
\end{equation}

Let $x>0$ and $s\in\mathbb{R}$.
Then above assumptions (i) and (ii) imply that
\eqref{ORD} admits the unique strong solution 
$X=(X_{t})_{t\geq s}$
with $X_{s}=x$, as long as $X$ moves in $(0,+\infty )$.
On the other hand, from (iii), the point zero is a regular boundary for the corresponding generator
$
L
=
( \sigma^{2} / 2 ) \frac{\mathrm{d}^{2}}{\mathrm{d} x^{2}}
+
b \frac{\mathrm{d}}{\mathrm{d} x}
$.
Hence there exists a finite random time $\tau^{s,x} > s$
such that
$
\lim_{t \to \tau^{s,x}} X_{t} = 0
$
almost surely. 
Now we define
$$
X_{s,t}^{-} (x)
:=
\left\{\begin{array}{ll}
X_{t} & \text{if $t \in [s, \tau^{s,x})$,} \\
0 & \text{if $t \geq \tau^{s,x}$}
\end{array}\right.
$$
for $x>0$ and $X_{s,t}^{-} (0) := X_{s,t}^{-} (0+)$.

\begin{Rm} 
Although the function
$
[s, +\infty ) \ni t \mapsto X_{s,t}^{-} (x)
$
is continuous for each fixed $s$ and $x$,
the function
$
( -\infty , t ] \ni s \mapsto X_{s,t}^{-} (x)
$
is highly discontinuous for each fixed $t$ and $x$.
The meaning of the latter process is unclear,
though there is an interpretation
(see \cite[Theorem 2]{WaS})
in the relation with Skorokhod's equation
when the situation is in the context of
coalescing stochastic flow generated by Tanaka's equation.
\end{Rm} 

\begin{Lem} 
\label{excit} 
With probability one, there exists $x_{0} > 0$
such that
$
X_{s,t}^{-} \vert_{(x_{0} , +\infty )}
$
is strictly increasing and 
$
X_{s,t}^{-} (x) = 0
$
for $0 \leq x < x_{0}$.
\end{Lem}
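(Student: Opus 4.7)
The plan is to set $x_0 := \sup\{x \geq 0 : X_{s,t}^{-}(x) = 0\}$ and verify both assertions of the lemma for this choice. By standard pathwise comparison for \eqref{ORD} on $(0, +\infty)$ (using the local Lipschitz property of $\sigma, b$ coming from (i)--(ii)), the map $x \mapsto X_{s,u}^{-}(x)$ is non-decreasing for each $u \geq s$, and therefore so is $x \mapsto \tau^{s,x}$. Consequently the $\omega$-section $\{x \geq 0 : X_{s,t}^{-}(x) = 0\} = \{x \geq 0 : \tau^{s,x} \leq t\}$ is an interval of the form $[0, x_0)$ or $[0, x_0]$, and the identity $X_{s,t}^{-}(x) = 0$ for $0 \leq x < x_0$ is immediate from the definition.

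Next I would show $x_0 > 0$ almost surely. Condition (iii) is precisely the classical Feller regularity criterion at the boundary $0$ for the generator $L = \frac{\sigma(x)^2}{2} \partial_x^2 + b(x) \partial_x$: both the scale density $p'(y) = \exp(-\int_1^y 2b/\sigma^2)$ and the speed density $1/(\sigma^2 p')$ are integrable near $0$. From the Feller classification, the expected hitting time $\mathbf{E}[\tau^{s,x} - s]$ is finite on a neighbourhood of $0$ and tends to $0$ as $x \to 0+$. Combined with the monotonicity of $\tau^{s,\cdot}$ noted above, this forces $\tau^{s,x} \downarrow s$ almost surely as $x \downarrow 0$. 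Applied with $\epsilon := t - s > 0$, this produces a random $\delta > 0$ with $\tau^{s,x} < t$ for every $x \in (0, \delta)$, whence $x_0 \geq \delta > 0$ almost surely.

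To establish strict monotonicity of $X_{s,t}^{-}$ on $(x_0, +\infty)$, I would use the Lamperti transform $h(x) := \int_1^x \sigma(y)^{-1}\, \mathrm{d}y$, which is a $C^3$-diffeomorphism of $(0, +\infty)$ onto its image under (i)--(ii). Whenever $X^{(i)} := X_{s, \cdot}^{-}(x_i)$ takes values in $(0, +\infty)$, It\^o's formula yields $\mathrm{d} Z^{(i)} = \mathrm{d} w^{(s)} + \beta(Z^{(i)})\, \mathrm{d}u$ for $Z^{(i)} := h(X^{(i)})$, where $\beta(z) := (b/\sigma)(h^{-1}(z)) - \frac{1}{2} \sigma'(h^{-1}(z))$ is $C^1$ on $h((0, +\infty))$. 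For $x_0 < x_1 < x_2$, continuity forces $X^{(1)}, X^{(2)}$ to take values in a pathwise compact subinterval of $(0, +\infty)$ on $[s, t]$, so $\beta'$ is bounded on the pathwise range of the $Z^{(i)}$. The difference $\Delta(u) := Z^{(2)}(u) - Z^{(1)}(u)$ then satisfies the pathwise linear ODE $\dot\Delta(u) = \beta'(\xi_u) \Delta(u)$ by the mean value theorem, so $\Delta(t) = \Delta(s) \exp\bigl(\int_s^t \beta'(\xi_u)\, \mathrm{d}u\bigr) > 0$. Since $h$ is strictly increasing, $X_{s,t}^{-}(x_1) < X_{s,t}^{-}(x_2)$.

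The main obstacle is the positivity step $x_0 > 0$ almost surely: it is precisely here that the integrability hypothesis (iii) enters non-trivially, via the Feller consequence $\mathbf{E}[\tau^{s,x} - s] \to 0$ as $x \to 0+$. Without this input one could a priori have a set of positive probability on which every trajectory starting from $(0, +\infty)$ avoids the barrier on the whole interval $[s, t]$, and the conclusion of the lemma would fail.
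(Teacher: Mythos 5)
Your proof takes an entirely different route from the paper's, and the key analytic step is incorrect as stated. The paper establishes $x_{0}>0$ almost surely by a soft contradiction: if the event $A:=\{\,X_{0,t}^{-}(x)>0 \text{ for all } x>0\,\}$ had positive probability, then the flow property together with stationarity would give $\mathbf{P}(X_{0,T}^{-}(y)>0)\geq \mathbf{P}(A)>0$ for every $T>t$, which contradicts, via Fatou's lemma, the almost-sure convergence $X_{0,T}^{-}(y)\to 0$ as $T\to\infty$ coming from the already-established finiteness of $\tau^{0,y}$. No quantitative hitting-time estimate is needed. Your plan --- deduce $\tau^{s,x}\downarrow s$ almost surely as $x\downarrow 0$ by combining monotonicity of $x\mapsto\tau^{s,x}$ with the claim that $\mathbf{E}[\tau^{s,x}-s]$ is finite near $0$ and tends to $0$ --- is architecturally sound, but the expectation claim does not follow from condition (iii). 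Already for Brownian motion ($\sigma\equiv 1$, $b\equiv 0$), which satisfies (iii), the expected hitting time of $0$ from any $x>0$ is infinite. Boundary regularity gives $\tau^{s,x}<\infty$ almost surely, not integrability; (iii) contains no control over the behaviour of the process away from $0$, which is what governs whether $\tau^{s,x}$ is integrable. You could repair the step by working with the first exit time $\sigma_{x}:=\inf\{u>s : X_{s,u}^{-}(x)\notin(0,1)\}$ of a bounded interval: $\mathbf{E}[\sigma_{x}-s]$ is finite, tends to $0$ as $x\to 0+$, and the scale-function computation shows the exit is at $0$ with probability tending to $1$; together these give $\tau^{s,x}\to s$ in probability and hence, by monotonicity, almost surely. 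The paper's argument, however, sidesteps all such boundary-theoretic estimates.

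For the strict-monotonicity part, which the paper simply labels ``obvious,'' your Lamperti-transform argument and the pathwise linear ODE for the difference of two trajectories are correct, and do make that step rigorous; this is a reasonable complement to the paper's terse treatment.
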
 

\begin{proof} 
For simplicity, assume that $s=0$.
We show only that with probability one, there exists $x > 0$
such that
$
X_{0,t}^{-} (x) = 0
$.
Then the remaining is obvious.

Let $t$, $y>0$. 
Suppose that
$
\mathbf{P}
(
	\text{$X_{0,t}^{-} (x) > 0$ for all $x>0$}
)
>0
$
holds.
Then for any $T>t$, we have
\begin{equation*}
\begin{split}
&
\mathbf{P} ( X_{0,T}^{-} (y) > 0 )
=
\mathbf{E}
[
	\mathbf{P}
	(
		X_{T-t,T}^{-} \circ X_{0,T-t}^{-} (y) > 0
		\vert
		\mathcal{F}_{T-t, T}^{w}
	)
] \\
&=
\int_{ [0, +\infty ) }
\mathbf{P}
(
	X_{0,t}^{-} (x) > 0
)
\mathbf{P} ( X_{0,T-t}^{-} (y) \in \mathrm{d}x ) \\
&\geq
\mathbf{P}
(
	\text{$X_{0,t}^{-} (x) > 0$ for all $x>0$}
).
\end{split}
\end{equation*}
Therefore we see that
$$
\inf_{T > t}
\mathbf{P} ( X_{0,T}^{-} (y) > 0 )
> 0
$$
holds. 
However, since $\lim_{T \to \infty}X_{0,T}^{-} (y) = 0$,
by applying Fatou's lemma, we have
\begin{equation*}
\begin{split}
0
&=
\mathbf{P}
(
	\lim_{T \to \infty}X_{0,T}^{-} (y) > 0
) \\
&\geq
\limsup_{T \to \infty}
\mathbf{P}
(
	X_{0,T}^{-} (y) > 0
) \\
&\geq
\mathbf{P}
(
	\text{$X_{0,t}^{-} (x) > 0$ for all $x>0$}
)\\
&> 0.
\end{split}
\end{equation*}
This is a contradiction.
\end{proof} 

In particular we have
$
X_{t,u}^{-}( X_{s,t}^{-} (0) )
=
X_{s,u}^{-} (0)
$
for $s<t<u$.
Now we see that the family $\{ X_{s,t}^{-} \}_{s \leq t}$
forms a stochastic flow on $[0, +\infty )$.
We call this the
{\it $(\sigma , b, w)$-stochastic flow with the absorbing barrier at $0$}.

From Lemma \ref{excit}, we see that
$R ( X_{s,t}^{-} ) = \{ 0 \}$,
but
$
0\notin D( X_{s,t}^{-} )
$
for any $s \leq t$.
Therefore we have
$
R ( X_{s,t}^{-} ) \cap D ( X_{t,u}^{-} ) = \emptyset
$
for $s \leq t \leq u$.
Hence by Proposition \ref{Duality},
$\{ (X_{s,t}^{-})^{*} \}_{s \leq t}$
forms a dual stochastic flow.

\subsection{The duality relation}

Define
$\widehat{\sigma} := \sigma$,
$\widehat{b} := \sigma \sigma^{\prime} - b$
and
$
\widehat{w}
=
( \widehat{w}(t) )_{t \in \mathbb{R}}
$
as the time-reversal Wiener process given by
$
\widehat{w}(t)
:=
w(-t)
$.

Assume that (i)-(iv) in subsection \ref{SFAB} hold and the pathwise uniqueness holds for the Skorokhod-type stochastic differential equation:
\begin{equation}
\label{SKR^} 
\mathrm{d} X_{t}
=
\widehat{\sigma} ( X_{t} ) \mathrm{d} w^{(s)} (t)
+
\widehat{b} ( X_{t} ) \mathrm{d} t
+
\mathrm{d} \phi_{t},
\quad
t \geq s.
\end{equation}

\begin{Thm}[Akahori-Watanabe \cite{AW}, Warren-Watanabe \cite{WW}]
\label{Principle} 
Let
$\{ X_{s,t}^{-} \}_{s \leq t}$
be the 
$(\sigma , b, w)$-stochastic flow with the absorbing barrier at $0$.
Then the dual flow $\{ ( X_{s,t}^{-} )^{*} \}_{s \leq t}$
is the
$(\widehat{\sigma} , \widehat{b}, \widehat{w})$-stochastic flow
with the reflecting barrier at $0$ and vice versa.
\end{Thm}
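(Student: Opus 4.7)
The plan is to fix $s \in \mathbb{R}$ and $x > 0$ and to identify the one-point motion
$Y_t := X_{s,t}^{*}(x) = (X_{-t,-s}^{-})^{-1}(x)$, $t \geq s$,
as the strong solution to the Skorokhod SDE \eqref{SKR^} associated with $(\widehat{\sigma}, \widehat{b})$ and driven by $\widehat{w}^{(s)}$, with initial value $Y_s = x$. Once this is done for every $x > 0$, the assumed pathwise uniqueness for \eqref{SKR^} identifies $\{ X_{s,t}^{*} \}_{s \leq t}$ with the $(\widehat{\sigma}, \widehat{b}, \widehat{w})$-reflecting flow. The general flow axioms of Definition \ref{Def:Flow} for $\{ X_{s,t}^{*} \}$ are already supplied by Proposition \ref{Duality}, whose hypothesis $R(X_{-t,-s}^{-}) \cap D(X_{-u,-t}^{-}) = \emptyset$ is exactly what Lemma \ref{excit} delivers, namely $R(X_{-t,-s}^{-}) = \{0\}$ while $0 \notin D(X_{-u,-t}^{-})$.

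For the interior identification, I would combine Kunita's theory of stochastic flows of diffeomorphisms \cite{Ku} with a time-reversal of the driving Wiener process. By assumption (ii), $\sigma, b \in C^{2}(0, +\infty)$; by Lemma \ref{excit}, $X_{-t,-s}^{-}$ restricted to $(x_{0}(\omega), +\infty)$ is a strictly increasing $C^{1}$-map onto $(0, +\infty)$, and its right-continuous inverse agrees with the classical inverse there. Differentiating the identity $X_{-t,-s}^{-}(Y_t) = x$ in $t$ on excursions where $Y_t > 0$, using the time-reversal relation $\widehat{w}^{(s)}(t) = w(-s) - w(-t)$, and invoking the standard formula for the Itô SDE satisfied by the inverse of a diffeomorphism flow (diffusion $-\sigma$, drift $\sigma\sigma' - b$), the sign flip produced by replacing the backward Itô integral against $w$ with a forward integral against $\widehat{w}$ yields
\begin{equation*}
dY_t
=
\widehat{\sigma}(Y_t)\, d\widehat{w}^{(s)}(t)
+
\widehat{b}(Y_t)\, dt .
\end{equation*}
The Skorokhod reflection term $\phi$ is then defined as the discrepancy between $Y_t - x$ and this stochastic integral over $[s, t]$. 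Using Lemma \ref{excit} once more, one checks that $\phi$ is continuous, non-decreasing, and grows only on $\{ u : Y_u = 0 \}$: the flat left-piece of $X_{-u,-s}^{-}$ is the random interval $[0, x_{0}(\omega,u))$, and $Y_u = 0$ precisely when this flat piece engulfs the target level $x$, which is the very mechanism producing the reflecting push.

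The hard part is the interface with the absorbing boundary. Since $X_{-t,-s}^{-}$ is only a homeomorphism of $[0, +\infty)$ (not a diffeomorphism, because of the flat left-piece), Kunita's results apply only after localization away from $0$. Gluing the interior SDEs across the excursions of $Y$ in $(0, +\infty)$ and identifying the resulting boundary contribution as a genuine Skorokhod local time — rather than a pathological singular term — is the delicate step that forces the appendix-level argument. Here assumption (iii) is essential: regularity of $0$ as a boundary of the generator ensures that the absorbing time is finite and that the excursion structure of $Y$ near $0$ is non-trivial; assumption (iv) in turn ensures that the transformed system $(\widehat{\sigma}, \widehat{b}, \widehat{w})$ itself satisfies (i)--(iv) of Subsection \ref{SFAB}, so that everything is symmetric. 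Finally, the ``vice versa'' direction is immediate from the involution $(X_{s,t}^{*})^{*} = X_{s,t}$ in Proposition \ref{Duality}: dualizing the reflecting flow recovers the absorbing one.
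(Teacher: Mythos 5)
Your high-level plan matches the paper's: write $Y_t := (X_{-t,-s}^-)^{-1}(x)$, obtain the interior dynamics with coefficients $(\widehat\sigma,\widehat b)$ from the Kunita/Ikeda--Watanabe formula for the inverse of a diffeomorphism flow combined with time-reversal of $w$, and define $\phi$ as the discrepancy. The flow axioms via Proposition~\ref{Duality} together with Lemma~\ref{excit}, and the ``vice versa'' direction via the involution $(X^*)^* = X$, are also handled correctly.

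But at the step you yourself flag as ``delicate'', the proposal stops being a proof, and there are two genuine gaps. First, continuity of $t\mapsto Y_t$ at times when $Y_t=0$: you attribute it to Lemma~\ref{excit}, but that lemma describes only the spatial shape of a single map $X_{s,t}^-$ (flat near zero, strictly increasing above) and says nothing about how the inverse varies as $t$ moves once the flat piece has engulfed $x$. The paper devotes a separate argument to this (Lemma~\ref{Continuity}); right-continuity there uses stochastic continuity and stationarity of the absorbing flow, and left-continuity uses an approximation from below, neither of which follows from Lemma~\ref{excit}. Without this you cannot conclude that $\phi$ is continuous. Second, the actual construction and identification of $\phi$: ``gluing the interior SDEs across excursions'' is exactly what needs an argument, since excursions of $Y$ away from $0$ are not pre-organized into a countable family you can simply sum. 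The paper instead introduces stopping times $\tau_n^\varepsilon$ (successive $\varepsilon$-escape times), decomposes $Y_t - x$ over $[\tau_{n-1}^\varepsilon,\tau_n^\varepsilon]$, splits according to whether $Y_{\tau_{n-1}^\varepsilon}>\varepsilon$ or $\le\varepsilon$, and shows that the remainder $R_t^\varepsilon$ of stochastic and Lebesgue integrals over the small pieces tends to $0$ in probability as $\varepsilon\downarrow 0$. That limit argument relies on $\tau_n^\varepsilon\to+\infty$ a.s.\ (via a hitting-time estimate and L\'evy's Borel--Cantelli extension) and on $\mathbf P(Y_t=0)=0$ for each fixed $t$ (Lemma~\ref{no_time}), both of which use the additional hypothesis — absent from your sketch — that $\sigma\vert_{(0,+\infty)}$ and $b\vert_{(0,+\infty)}$ are bounded, which is the only case in which the paper's appendix actually proves the theorem.
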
 

Theorem \ref{Principle} has already been stated in \cite{AW} and \cite{WW}, though it is not main result of them.
However the proof is given in neither (a rough sketch of the proof is given in \cite{AW}).
We give a proof of this theorem in Appendix \ref{Appdx2} when $\sigma$ and $b$ are bounded on $(0,+\infty)$.

\begin{Rm}
We note that the pathwise uniqueness assumption 
for \eqref{SKR^} is relaxed to the law-uniqueness.
In fact,
suppose that $(X, w^{\prime})$ is a solution to \eqref{SKR^} with $X_{0}=x$.
Then we can consider
$(\sigma , b, w)$-stochastic flow
$\{ X_{s,t}^{-} \}_{s\leq t}$
with the absorbing barrier at $0$,
where $w:=\widehat{w^{\prime}}$.
As we will see in Appendix \ref{Appdx2}, the process $\widehat{X}_{t} := (X_{0,t}^{-})^{*} (x)$ solves \eqref{SKR^}
and hence is a strong solution to (\ref{SKR^})
with $\widehat{X}_{0} = x$ and with respect to
$\widehat{w} = w^{\prime}$.

Now we have two solutions on the same probability space.
Since $\widehat{\sigma} >0$, $w^{\prime}$ is reconstructed from
$X$ and $\widehat{X}=(\widehat{X})_{t\geq 0}$
by the same procedure respectively.
Therefore the law-uniqueness for \eqref{SKR^}
implies that $(X,w^{\prime}) = (\widehat{X} , w^{\prime})$ in law.
Furthermore, since $\widehat{X}$ is a strong solution, 
$X=\widehat{X}$ holds almost surely. 
This proves the pathwise uniqueness for \eqref{SKR^}.
\end{Rm}

\section{A Discrete Analogue of the Duality Relation}

In this section, we establish a discrete analogues of
Theorem \ref{Principle} in the case
of the Euler-Maruyama approximation.

Let $w=(w(t))_{t \in \mathbb{R}}$ be a one-dimensional
Wiener process.
Fix $h > 0$, and let $t_{k} := kh$ for $k \in \mathbb{Z}$.
We write
$\Delta t_{k} := t_{k} - t_{k-1} \equiv h$
and
$\Delta w_{k} := w(t_{k}) - w(t_{k-1})$.

\begin{Def} 
\label{discrete-SF} 
We call a family of $\mathcal{T}$-valued random variables
$\{ X_{k,l} \}_{k \leq l}$
(where $k$ and $l$ move in $\mathbb{Z}$)
a {\it stochastic flow on $[0, +\infty )$ in discrete-time}
if it satisfies the following.
\begin{itemize}
\item[(i)]
$
X_{k,l} \circ X_{j,k} = X_{j,l}
$
and $X_{k,k} = \mathrm{id}_{[0, +\infty)}$ almost surely
for every integers $j \leq k \leq l$.

\vspace{2mm}
\item[(ii)]
For any sequence of integers
$k_{0} \leq k_{1} \leq \cdots \leq k_{m}$, 
$\mathcal{T}$-valued random variables
$(X_{k_{i-1},k_{i}})_{i=1}^{m}$
are independent.

\vspace{2mm}
\item[(iii)]
$X_{k,l} = X_{k+a,l+a}$ in law
for each $k \leq l$ and $a\in\mathbb{N}$.

\end{itemize}

\end{Def} 
Similarly to the previous section, for a given stochastic flow 
$\{ X_{k,l} \}_{k\leq l}$ on $[0,+\infty)$ in discrete-time, 
we define its dual $\{ X_{k,l}^{*} \}_{k\leq l}$ by 
$X_{k,l}^{*}(x):=\inf\{ y\geq 0: X_{-l,-k}^{-1}(y)>x \}$, $x\geq 0$.

We consider the Euler-Maruyama approximation for
the absorbing barrier diffusion associated with (\ref{ORD})
as follows.

For $k,l\in\mathbb{Z}$ with $k \leq l$ and $x \in (0, +\infty )$,
we define
$$X_{k,k}^{-} (x) := x$$
and for $k < l$,
\begin{equation*}
\begin{split}
X_{k,l}^{-} (x)
&
:=
1_{ (0,+\infty) } (X_{k,l-1}^{-} (x)) \\
&\hspace{5mm}\times
\max
\big\{
	0,
	X_{k,l-1}^{-} (x)
	+
	\sigma ( X_{k,l-1}^{-} (x) ) \Delta w_{l}
	+
	b ( X_{k,l-1}^{-} (x) ) \Delta t_{l}
\big\} .
\end{split}
\end{equation*}
For $x=0$, we define $X_{k,l}^{-} (0) := X_{k,l}^{-} (0+)$.
By the construction, for each $k \in \mathbb{Z}$,
$X_{k,k+1}^{-} \vert_{(0, +\infty )}$
is continuous.
Furthermore, if
\begin{equation}
\label{Tech}
\text{{\it $X_{k,l}^{-} : [0, +\infty ) \to [0,+\infty )$
is non-decreasing almost surely for each $k \leq l$,}}
\end{equation}
then 
$\{ X_{k,l}^{-} \}_{k \leq l}$
forms a stochastic flow on $[0, +\infty)$
in discrete-time in the sense of Definition \ref{discrete-SF}.
\begin{Def} 
\label{EM-AbsFl} 
We call $\{ X_{k,l}^{-} \}_{k\leq l}$
the {\it Euler-Maruyama approximation} of the
$(\sigma ,b, w)$-stochastic flow with the absorbing barrier
at $0$
if the condition (\ref{Tech}) holds.
\end{Def} 

We give a simple sufficient condition for (\ref{Tech}).

\begin{Prop} 
\label{suff_eg} 
Assume that $\sigma$ and $b$ satisfy the following three conditions.
\begin{itemize}
\item[(1)]
$\sigma \vert_{(0, +\infty)}$
and
$b \vert_{(0, +\infty)}$
are continuously differentiable.

\vspace{2mm}
\item[(2)]
$\sigma (x) > 0$
and
$\sigma^{\prime} (x) \geq 0$
for $x>0$.

\vspace{2mm}
\item[(3)]
For each $x>0$, it holds that
$$
x ( \log \vert \sigma \vert )^{\prime} (x)
+
b(x)
\big\{
	( \log \vert \sigma \vert )^{\prime} (x)
	-
	( \log \vert b \vert )^{\prime} (x)
\big\}
\Delta t_{l}
\leq 1 .
$$
\end{itemize}
Then $X_{k,l}^{-}$ is non-decreasing almost surely.
\end{Prop}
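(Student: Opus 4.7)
The plan is to argue by induction on $l-k \geq 0$, reducing the problem to the monotonicity of a single Euler--Maruyama step, and to recognize that condition (3) is exactly a monotonicity statement for a certain auxiliary function. The base case $l = k$ is immediate since $X_{k,k}^{-} = \mathrm{id}_{[0, +\infty)}$. For the inductive step, the recursion reads $X_{k,l}^{-}(x) = \Phi_{l}(X_{k,l-1}^{-}(x))$, where the one-step map depends only on $\Delta w_{l}$ and is given by $\Phi_{l}(0) := 0$ and
$$\Phi_{l}(y) := \max\{ 0, G_{l}(y) \}, \quad G_{l}(y) := y + \sigma(y) \Delta w_{l} + b(y) \Delta t_{l} \quad (y > 0).$$
Since the composition of non-decreasing maps is non-decreasing and $\Phi_{l}(y) \geq 0 = \Phi_{l}(0)$ for all $y > 0$, it suffices to show that $\Phi_{l}$ is almost surely non-decreasing on $(0, +\infty)$.

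The key reformulation is as follows. Using $\sigma > 0$ (condition (2)), set
$$h(y) := -\frac{y + b(y)\Delta t_{l}}{\sigma(y)}, \qquad y > 0,$$
so that $G_{l}(y) > 0$ if and only if $\Delta w_{l} > h(y)$. A direct computation yields
$$h'(y) = \frac{(y + b(y)\Delta t_{l})\sigma'(y) - (1 + b'(y)\Delta t_{l})\sigma(y)}{\sigma(y)^{2}},$$
and multiplying condition (3) through by $\sigma(y) > 0$ shows that condition (3) is equivalent to $h'(y) \leq 0$ on $(0, +\infty)$. Hence $h$ is continuous and non-increasing, and the set $\{ y > 0 : G_{l}(y) > 0 \}$ is an interval of the form $(y_{0}, +\infty)$ for some (random) $y_{0} \in [0, +\infty]$; on $[0, y_{0}]$ we have $\Phi_{l} \equiv 0$.

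It remains to show $G_{l}$ is non-decreasing on $(y_{0}, +\infty)$. For such $y$, $\Delta w_{l} > h(y)$, and combined with $\sigma'(y) \geq 0$ (condition (2)) this gives $\sigma'(y)\Delta w_{l} \geq \sigma'(y) h(y)$, so
$$G_{l}'(y) = 1 + \sigma'(y)\Delta w_{l} + b'(y)\Delta t_{l} \geq 1 + \sigma'(y) h(y) + b'(y)\Delta t_{l} = \frac{(1 + b'(y)\Delta t_{l})\sigma(y) - (y + b(y)\Delta t_{l})\sigma'(y)}{\sigma(y)},$$
which is non-negative by the reformulated condition (3). Continuity of $G_{l}$ then forces $G_{l}(y_{0}) = 0$ at the transition point, so $\Phi_{l} = \max\{ 0, G_{l} \}$ is non-decreasing on $[0, +\infty)$. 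The main conceptual step is recognizing the auxiliary function $h$ and the equivalence between condition (3) and $h' \leq 0$; once that is in hand, the rest is an elementary case analysis, and handling the factor $1_{(0, +\infty)}$ together with the right-continuous convention at $y = 0$ is only a minor bookkeeping matter.
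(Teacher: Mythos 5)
Your proof is correct and the core computation is the same as the paper's: reduce to the one-step map, bound $\Delta w_{l}$ from below on the set where the pre-truncation increment is positive using $\sigma > 0$, plug this bound into the derivative and invoke condition (3). The difference is in how you assemble global monotonicity from the local derivative estimate. The paper argues that $x\mapsto\max\{0,h(x)\}$ is absolutely continuous, uses Fubini to show $\{h=0\}$ is Lebesgue-null almost surely, and concludes from a.e.\ nonnegativity of the derivative. You instead introduce the auxiliary function $h(y)=-(y+b(y)\Delta t_{l})/\sigma(y)$, observe that condition (3) is exactly $h'\leq 0$, and deduce that $\{G_{l}>0\}=\{\Delta w_{l}>h\}$ is a half-line $(y_{0},+\infty)$; monotonicity is then a clean two-piece gluing argument with a continuity check at the transition point. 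This buys you two things: it sidesteps the measure-theoretic step entirely, and it makes visible that condition (3) is precisely a monotonicity statement for $h$ rather than an ad hoc inequality. The paper's route is slightly more robust in that it would still work if $\{G_{l}>0\}$ were not an interval; yours is more transparent about why the hypotheses are what they are. Both are valid.

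One small remark: your explicit induction is the right way to make precise the paper's implicit "it is enough to prove non-decreaseness of $X_{l-1,l}^{-}$," but you should note (as you implicitly do by setting $\Phi_{l}(0):=0$) that the indicator $1_{(0,+\infty)}(X_{k,l-1}^{-}(x))$ in the definition is what makes the one-step map send $0\mapsto 0$, so that $\Phi_{l}$ really is non-decreasing on all of $[0,+\infty)$ and not only on $(0,+\infty)$.
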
 

\begin{proof}[Proof of Proposition \ref{suff_eg}.] 
It is enough to prove the non-decreaseness of
$X_{l-1,l}^{-} \vert_{(0,+\infty)}$
for each $l\in\mathbb{Z}$.
Put
$$
h(x)
:=
x + \sigma (x) \Delta w_{l} + b(x) \Delta t_{l},
\quad
x > 0.
$$
Since
$
X_{l-1,l}^{-} (x) = \max \{ 0, h(x) \}
$
is absolutely continuous,
it suffices to show that
$
( X_{l-1,l}^{-} )^{\prime} (x) \geq 0
$
for almost all $x > 0$.

Since $h(x)$ has the density for all $x>0$,
we have
$\mathbf{P}(\{h(x)=0\})=0$ for all $x>0$.
Therefore, from Fubini's theorem, we have
$$
\mathbf{E}
[
	\mathrm{Leb} ( \{ h = 0 \} )
]
=
\int_{0}^{+\infty} \mathbf{P} ( h(x) = 0 ) \mathrm{d}x
= 0.
$$
Hence $\{ x> 0 : h(x) = 0 \}$
is of Lebesgue measure zero almost surely.

Now it only needs to show the non-negativity of
$( X_{l-1,l}^{-} )^{\prime}$
on the two open sets
$\{ h < 0 \}$
and
$\{ h > 0 \}$.

If $x \in \{ h < 0 \}$, then
we clearly have
$
( X_{l-1,l}^{-} )^{\prime} (x) = 0
$.

On the other hand, if $x \in \{ h > 0 \}$, then the condition (2) yields that
$$
\Delta w_{l}
>
\frac{ -1 }{ \sigma (x) }
\left(
	x + b(x) \Delta t_{l}
\right).
$$
By using the condition (3), we obtain that
\begin{equation*}
\begin{split}
&
( X_{l-1,l}^{-} )^{\prime} (x)
=
1 +
\sigma^{\prime} (x)
\Delta w_{l}
+ b^{\prime} (x) \Delta t_{l} \\
&\geq
1 +
\sigma^{\prime} (x)
\frac{ -1 }{ \sigma (x) }
\left(
	x + b(x) \Delta t_{l}
\right)
+ b^{\prime} (x) \Delta t_{l} \\
&=
1-
\left(
x ( \log \vert \sigma \vert )^{\prime} (x)
+
b(x)
\left(
	( \log \vert \sigma \vert )^{\prime} (x)
	-
	( \log \vert b \vert )^{\prime} (x)
\right) \Delta t_{l}
\right) \\
&\geq 0.
\end{split}
\end{equation*}
This completes the proof.
\end{proof} 

\begin{Eg} 
The following pairs fulfill
assumptions in Proposition \ref{suff_eg}.
\begin{itemize}
\item[$\bullet$]
$\sigma (x) = \alpha x + \beta$
and
$b (x) = \gamma x + \delta$
with
$$
\alpha \geq 0,
\quad
\beta \geq 0
\quad
\text{and}
\quad
\det
\left(\begin{array}{cc}
	\alpha & \beta \\
	\gamma & \delta
\end{array}\right)
\leq
\beta .
$$

\item[$\bullet$]
In particular,
$\sigma (x) = \alpha x + \beta$
and
$b (x) = \gamma x + \delta$
with
$$
\alpha \geq 0,
\quad
\beta \geq 0,
\quad
\gamma \geq 0
\quad
\text{and}
\quad
\delta \leq 0.
$$

\item[$\bullet$]
$\sigma (x) = \alpha x + \beta$
($\alpha \geq 0$, $\beta \geq 0$)
and
$b(x) = - 1/x$.

\vspace{2mm}
\item[$\bullet$]
The pair
$
\sigma (x) = \sqrt{2ax}
$
($a>0$)
and
$
b(x) = cx + d
$
($c \in \mathbb{R}$, $d \leq 0$)
also satisfies the conditions in Proposition \ref{suff_eg}
if $n$ is large enough.

\end{itemize}
\end{Eg} 

We work with the above technical assumption
(\ref{Tech})
in the sequel.
We define the stochastic flow
$\{ \widehat{X}_{k,l} \}_{k \leq l}$
on $[0, +\infty)$
in discrete-time by
$$
\widehat{X}_{k,l} := ( X_{-l,-k}^{-} )^{-1}.
$$

\begin{Prop} 
\label{max-eq} 
Let $k \in \mathbb{Z}$ and $x \geq 0$ and 
assume that \eqref{Tech} holds.
If
$\widehat{X}_{k,k+1}(x) > 0$ then we have
$X_{-k-1, -k}^{-} ( \widehat{X}_{k,k+1}(x) ) = x$
and
\begin{equation*}
\begin{split}
&
x
=
\max
\big\{
	0,
	\widehat{X}_{k,k+1}(x)
	- \sigma ( \widehat{X}_{k,k+1}(x) ) \Delta \widehat{w}_{k+1}
	+ b( \widehat{X}_{k,k+1}(x) ) \Delta t_{k+1}
\big\} .
\end{split}
\end{equation*}
\end{Prop}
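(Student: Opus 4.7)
\medskip
\noindent\textbf{Proof plan.} The plan is to unfold the one-step Euler--Maruyama recursion for $X_{-k-1,-k}^{-}$, translate the Brownian increments and time steps on the negative time axis into those attached to $\widehat{w}$, and then read off the claim from the defining property of the right-continuous inverse.

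\medskip
\noindent\textbf{Step 1 (Rewriting a single step in terms of $\widehat{w}$).} First I would note that $\Delta w_{-k}=w(t_{-k})-w(t_{-k-1})=w(-t_k)-w(-t_{k+1})=\widehat{w}(t_k)-\widehat{w}(t_{k+1})=-\Delta \widehat{w}_{k+1}$, while $\Delta t_{-k}=h=\Delta t_{k+1}$. Since the Euler--Maruyama recursion, read from time $-k-1$ to $-k$ and evaluated at $z>0$, gives
\[
X_{-k-1,-k}^{-}(z)=\max\bigl\{0,\,z+\sigma(z)\Delta w_{-k}+b(z)\Delta t_{-k}\bigr\},
\]
we may rewrite this as
\[
X_{-k-1,-k}^{-}(z)=\max\bigl\{0,\,z-\sigma(z)\Delta \widehat{w}_{k+1}+b(z)\Delta t_{k+1}\bigr\},\quad z>0.
\]
Thus, once the first equality $X_{-k-1,-k}^{-}(\widehat{X}_{k,k+1}(x))=x$ is established, the displayed $\max$ identity in the statement follows by substituting $z=\widehat{X}_{k,k+1}(x)$.

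\medskip
\noindent\textbf{Step 2 (Right-continuous inverse at an interior point).} Set $y:=\widehat{X}_{k,k+1}(x)$ and suppose $y>0$. By the very definition of the right-continuous inverse,
\[
y=\inf\bigl\{z\geq 0:X_{-k-1,-k}^{-}(z)>x\bigr\}.
\]
I would argue as follows. For every $\varepsilon>0$ there exists $z\in(y,y+\varepsilon)$ with $X_{-k-1,-k}^{-}(z)>x$; letting $\varepsilon\downarrow 0$ and using the right-continuity of $X_{-k-1,-k}^{-}$ (in fact it is continuous on $(0,+\infty)$ by the remark preceding Definition \ref{EM-AbsFl}, since $y>0$), we obtain $X_{-k-1,-k}^{-}(y)\geq x$. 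Conversely, for $0<z<y$ one has $X_{-k-1,-k}^{-}(z)\leq x$ by the definition of the infimum, and the continuity of $X_{-k-1,-k}^{-}$ on $(0,+\infty)$ yields $X_{-k-1,-k}^{-}(y)\leq x$ in the limit $z\uparrow y$. (Here the hypothesis $y>0$ is essential; it places $y$ in the open interval where continuity has been asserted.) Combining both inequalities gives $X_{-k-1,-k}^{-}(y)=x$, which is the first conclusion.

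\medskip
\noindent\textbf{Step 3 (Assembling the identity).} Plugging $y=\widehat{X}_{k,k+1}(x)>0$ into the explicit one-step expression from Step 1 immediately yields
\[
x=\max\bigl\{0,\,\widehat{X}_{k,k+1}(x)-\sigma(\widehat{X}_{k,k+1}(x))\Delta \widehat{w}_{k+1}+b(\widehat{X}_{k,k+1}(x))\Delta t_{k+1}\bigr\},
\]
as required. The only real subtlety is the use of continuity in Step 2: without the hypothesis $\widehat{X}_{k,k+1}(x)>0$, the right-continuous inverse could hit the boundary $0$, where the function $X_{-k-1,-k}^{-}$ was defined by a right-limit rather than by the one-step formula, and the clean equality $X_{-k-1,-k}^{-}(y)=x$ could fail.
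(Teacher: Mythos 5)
Your proof is correct and follows essentially the same route as the paper: unfold the one-step Euler--Maruyama recursion and translate increments via $\Delta w_{-k}=-\Delta\widehat{w}_{k+1}$, $\Delta t_{-k}=\Delta t_{k+1}$, then establish $X_{-k-1,-k}^{-}(\widehat{X}_{k,k+1}(x))=x$ from the definition of the right-continuous inverse together with the continuity of $X_{-k-1,-k}^{-}$ on $(0,+\infty)$, which is what the hypothesis $\widehat{X}_{k,k+1}(x)>0$ makes available. The only cosmetic difference is in Step 2: the paper quotes the general inequality $\varphi(\varphi^{-1}(x))\geq x$ for the lower bound and uses a small $\varepsilon$-shift for the upper bound, whereas you prove both inequalities directly via right- and left-limits; the substance is identical.
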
 
\begin{proof} 
Since
$\Delta w_{-k} = - \Delta \widehat{w}_{k+1}$
and
$\Delta t_{-k} = \Delta t_{k+1}$,
for any $y > 0$, we have
\begin{equation*}
\begin{split}
&
\max
\{
	0,
	y
	- \sigma (y) \Delta \widehat{w}_{k+1}
	+ b(y) \Delta t_{k+1}
\} \\
&=
\max\{
	0,
	y + \sigma (y) \Delta w_{-k} + b(y) \Delta t_{-k}
\}
=
X_{-k-1, -k}^{-} (y).
\end{split}
\end{equation*}
By substituting $y=\widehat{X}_{k,k+1}(x) > 0$ for this equation,
we obtain that
\begin{equation*}
\begin{split}
&
X_{-k-1, -k}^{-} ( \widehat{X}_{k,k+1}(x) ) \\
&=
\max
\big\{
	0,
	\widehat{X}_{k,k+1}(x)
	- \sigma ( \widehat{X}_{k,k+1}(x) ) \Delta \widehat{w}_{k+1}
	+ b( \widehat{X}_{k,k+1}(x) ) \Delta t_{k+1}
\big\} .
\end{split}
\end{equation*}
We turn to prove
$X_{-k-1, -k}^{-} ( \widehat{X}_{k,k+1}(x) ) = x$. 
Since
$X_{-k-1, -k}^{-} ( \widehat{X}_{k,k+1}(x) ) \geq x$,
from the definition, it suffices to prove 
$X_{-k-1, -k}^{-} ( \widehat{X}_{k,k+1}(x) ) \leq x$.

Take an arbitrary $\varepsilon \in (0, \widehat{X}_{k, k+1}(x))$.
Then there exists $y \in [0, +\infty )$ such that
$X_{-k-1,-k}^{-} (y) > x$
and
$$\widehat{X}_{k,k+1} (x) + \varepsilon > y  > \widehat{X}_{k,k+1} (x).$$
Then noting that $y-\varepsilon > 0$ but
$y-\varepsilon \notin \{ z \geq 0 : X_{-k-1,-k}^{-} (z) > x \}$,
we see that
\begin{equation*}
\begin{split}
X_{-k-1,-k}^{-}
\big(
	\widehat{X}_{k, k+1}(x) - \varepsilon
\big)
\leq
X_{-k-1,-k}^{-}
(
	y - \varepsilon
)
\leq
x.
\end{split}
\end{equation*}
Since $X_{-k-1,-k}^{-}$ is continuous on $(0,+\infty )$
and $\varepsilon > 0$ is arbitrary,
we find that
$
X_{-k-1,-k}^{-}
\big(
	\widehat{X}_{k, k+1}(x)
\big)
\leq
x
$.
\end{proof} 

\begin{Cor} 
\label{EM-ref}
Suppose that
$\sigma$ and $b$ are constants.
Then for any $k \in \mathbb{Z}$ and $x \geq 0$,
we have
\begin{equation*}
\begin{split}
\widehat{X}_{k,k+1}(x)
=
\max
\{
	0,
	x + \sigma \Delta \widehat{w}_{k+1} - b \Delta t_{k+1}
\} .
\end{split}
\end{equation*}
\end{Cor}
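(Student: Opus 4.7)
The plan is to unwind the definition of $\widehat{X}_{k,k+1}$ as a right-continuous inverse and compute it directly from the explicit one-step Euler-Maruyama formula, which becomes a simple piecewise-affine map in the constant-coefficient case.

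First, I would observe that the technical assumption \eqref{Tech} is automatic here: when $\sigma$ and $b$ are constants, the map $y \mapsto \max\{0, y + \sigma \Delta w_l + b \Delta t_l\}$ is non-decreasing, so by induction each $X_{k,l}^{-}$ is non-decreasing on $[0,+\infty)$, and the right-continuity at $0$ comes from the convention $X_{k,l}^{-}(0) := X_{k,l}^{-}(0+)$. Thus $\widehat{X}_{k,l} = (X_{-l,-k}^{-})^{-1}$ is well-defined as a dual flow.

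Next, I would write down $X_{-k-1,-k}^{-}$ explicitly. Using the identities $\Delta w_{-k} = -\Delta \widehat{w}_{k+1}$ and $\Delta t_{-k} = \Delta t_{k+1}$ already exploited in the proof of Proposition \ref{max-eq}, the Euler-Maruyama step gives
\[
X_{-k-1,-k}^{-}(y) = \max\{0,\, y - \sigma \Delta \widehat{w}_{k+1} + b \Delta t_{k+1}\}
\]
for every $y \geq 0$ (the $y = 0$ value automatically agreeing with the $y \downarrow 0$ limit). Then by definition,
\[
\widehat{X}_{k,k+1}(x) = \inf\{y \geq 0 : X_{-k-1,-k}^{-}(y) > x\}.
\]
Since $x \geq 0$, the inequality $\max\{0, y - \sigma \Delta \widehat{w}_{k+1} + b \Delta t_{k+1}\} > x$ is equivalent to $y > x + \sigma \Delta \widehat{w}_{k+1} - b \Delta t_{k+1}$, so the infimum over $y \geq 0$ is exactly $\max\{0,\, x + \sigma \Delta \widehat{w}_{k+1} - b \Delta t_{k+1}\}$, as desired.

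There is no real obstacle; the only point requiring any care is the boundary case, namely that when $x + \sigma \Delta \widehat{w}_{k+1} - b \Delta t_{k+1} \leq 0$ the defining set $\{y \geq 0 : X_{-k-1,-k}^{-}(y) > x\}$ is all of $[0,+\infty)$ (for $x > 0$ one needs the strict inequality together with right-continuity; for $x = 0$ one uses that $\sigma \Delta \widehat{w}_{k+1} - b \Delta t_{k+1} \leq 0$ forces the inf to be $0$). Alternatively, one can derive the same formula from Proposition \ref{max-eq}: on the event $\{\widehat{X}_{k,k+1}(x) > 0\}$ the identity $x = \max\{0, \widehat{X}_{k,k+1}(x) - \sigma \Delta \widehat{w}_{k+1} + b \Delta t_{k+1}\}$ can be solved for $\widehat{X}_{k,k+1}(x)$, and the complementary event $\{\widehat{X}_{k,k+1}(x) = 0\}$ is handled by noting it coincides with $\{x + \sigma \Delta \widehat{w}_{k+1} - b \Delta t_{k+1} \leq 0\}$ via monotonicity of $X_{-k-1,-k}^{-}$.
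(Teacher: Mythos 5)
Your proof is correct and takes a genuinely different, and arguably cleaner, route than the paper's. The paper's proof first invokes Proposition~\ref{max-eq} to obtain the implicit relation
$x = \max\{0,\, \widehat{X}_{k,k+1}(x) - \sigma \Delta \widehat{w}_{k+1} + b \Delta t_{k+1}\}$
on the event $\{\widehat{X}_{k,k+1}(x) > 0\}$, and then solves it, splitting into $x > 0$ and $x = 0$; the case $\widehat{X}_{k,k+1}(x) = 0$ is left implicit. You instead write out the one-step map $X_{-k-1,-k}^{-}$ explicitly: setting $c := \sigma \Delta \widehat{w}_{k+1} - b \Delta t_{k+1}$, it is the piecewise-affine map $y \mapsto \max\{0,\, y - c\}$, and for $x \geq 0$ the inequality $\max\{0,\, y - c\} > x$ is equivalent to $y > x + c$, so
$\widehat{X}_{k,k+1}(x) = \inf\{y \geq 0 : y > x + c\} = \max\{0,\, x + c\}$.
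This handles all cases in one stroke and avoids a sign slip in the paper's $x=0$ computation (the paper writes $+b\Delta t_{k+1}$ in that line where $-b\Delta t_{k+1}$ is meant). Your preliminary remark that \eqref{Tech} is automatic for constant coefficients is a useful sanity check, though the constant case is already covered by the examples following Proposition~\ref{suff_eg}. One small imprecision: in the boundary case $x + c = 0$ the set $\{y \geq 0 : X_{-k-1,-k}^{-}(y) > x\}$ is $(0,+\infty)$ rather than all of $[0,+\infty)$, but its infimum is still $0$, so the conclusion is unaffected.
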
 
\begin{proof} 
Suppose that $\widehat{X}_{k,k+1}(x) > 0$.
Then by Proposition \ref{max-eq}, we have, 
\begin{equation*}
\begin{split}
&
x
=
\max
\big\{
	0,
	\widehat{X}_{k,k+1}(x)
	- \sigma \Delta \widehat{w}_{k+1}
	+ b \Delta t_{k+1}
\big\} .
\end{split}
\end{equation*}
Therefore, if $x > 0$, we have
$
\widehat{X}_{k,k+1}(x)
=
x + \sigma \Delta \widehat{w}_{k+1} - b \Delta t_{k+1}
$.

If $x=0$ then, by the definition,
\begin{equation*}
\begin{split}
\widehat{X}_{k,k+1}(0)
&=
\inf \{ y: y + \sigma \Delta w_{-k} + b \Delta t_{-k} > 0 \} \\
&=
\inf
\Big(
[0, +\infty )
\cap
\big(
	\sigma \Delta \widehat{w}_{k+1} + b \Delta t_{k+1},
	+\infty
\big)
\Big) \\
&=
\max
\{
	0,
	\sigma \Delta \widehat{w}_{k+1} + b \Delta t_{k+1}
\} .
\end{split}
\end{equation*}
Since $\widehat{X}_{k,k+1}(0) > 0$,
we have
$
\widehat{X}_{k,k+1}(0)
=
\sigma \Delta \widehat{w}_{k+1} + b \Delta t_{k+1}
$.

\end{proof} 

\section{Convergence Implication for Weak Error}
\label{Conv>>W} 

In this section, we prove Theorem \ref{Weak_Err}.
To prove this theorem, we prepare several propositions.

\begin{Prop} 
\label{seesaw} 
Let $\varphi \in \mathcal{T}$.
Then for $x,y \geq 0$, we have
\begin{itemize}
\item[{\rm (i)}]
$
\varphi^{-1} (y) \leq x
\Longrightarrow
\varphi (x) \geq y
$,

\item[{\rm (ii)}]
$
\varphi (x) > y
\Longrightarrow
\varphi^{-1} (y) \leq x
$.
\end{itemize}
\end{Prop}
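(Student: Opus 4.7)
The plan is to prove (ii) first, as it is essentially immediate from the definition of the infimum, and then derive (i) using right-continuity and monotonicity of $\varphi$.

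For (ii), suppose $\varphi(x) > y$. Then $x$ itself belongs to the set $\{z \geq 0 : \varphi(z) > y\}$ whose infimum defines $\varphi^{-1}(y)$. Hence $\varphi^{-1}(y) \leq x$, and nothing more is needed.

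For (i), the key observation is the identity $\varphi(\varphi^{-1}(y)) \geq y$, which is a standard fact for non-decreasing right-continuous functions. To prove it, I would first dispose of the trivial case where $\{z \geq 0 : \varphi(z) > y\}$ is empty (then $\varphi^{-1}(y) = +\infty$ and the hypothesis $\varphi^{-1}(y) \leq x$ fails, so (i) is vacuous). Otherwise, one can choose a sequence $z_n$ in this set with $z_n \downarrow \varphi^{-1}(y)$. Since $\varphi(z_n) > y$ for each $n$, taking the limit and invoking right-continuity of $\varphi$ (using the convention $\varphi(0-)=0$ only if $\varphi^{-1}(y) = 0$, which is handled the same way by right-continuity at $0$) yields $\varphi(\varphi^{-1}(y)) \geq y$. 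Finally, the assumption $\varphi^{-1}(y) \leq x$ together with monotonicity of $\varphi$ gives
\[
\varphi(x) \geq \varphi(\varphi^{-1}(y)) \geq y,
\]
as desired.

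There is no serious obstacle here; the proposition is really a bookkeeping lemma on the right-continuous inverse. The only mild subtlety is making sure the passage to the limit in $\varphi(z_n) > y$ is justified by right-continuity (so one must take the $z_n$ converging from the right to $\varphi^{-1}(y)$), and noting that the strict inequality $\varphi(x) > y$ in (ii) is necessary because $\varphi$ may have flat pieces on which $\varphi^{-1}$ lands only at the rightmost endpoint.
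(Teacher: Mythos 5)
Your proof is correct. The paper omits the proof of this proposition as ``easy,'' and your argument is the natural one: part (ii) is immediate from the definition of the infimum, and part (i) reduces to the standard fact $\varphi(\varphi^{-1}(y))\geq y$ (which the paper itself invokes without proof, as inequality \eqref{Ineq:01} in the proof of Proposition \ref{RegInv}), established exactly as you do via a sequence decreasing to $\varphi^{-1}(y)$ from the right, right-continuity of $\varphi$, and then monotonicity; your handling of the vacuous case $\varphi^{-1}(y)=+\infty$ is also fine.
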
 

The proof is easy and hence omitted.
The next proposition says that the dual flow method realizes 
Siegmund's duality.

\begin{Prop} 
\label{S-duality} 
Let $\{ X_{s,t} \}_{s \leq t}$ be a stochastic flow on $[0, +\infty )$.
Assume that
$
\mathbf{P} ( X_{s,t} (x) = y ) = 0
$
holds for $x,y > 0$.
Then for each
$
x_{i},
y_{i} > 0
$,
$i=1,\cdots ,n$,
we have 
\begin{equation*}
\begin{split}
&\mathbf{P}
(
	X_{s,t}^{*} (y_{1}) \leq x_{1} ;
	\cdots ;
	X_{s,t}^{*} (y_{n}) \leq x_{n}
)\\
&=
\mathbf{P}
(
	X_{s,t} (x_{1}) > y_{1} ;
	\cdots ;
	X_{s,t} (x_{n}) > y_{n}
) .
\end{split}
\end{equation*}
The same is true for
stochastic flows on $[0, +\infty )$
in discrete-time.
\end{Prop}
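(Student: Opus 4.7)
The plan is to combine stationarity with the pointwise seesaw inequalities in Proposition \ref{seesaw}, using the no-atom hypothesis to upgrade those inequalities to equalities. The whole argument is essentially a sandwich plus a null-set estimate; I do not anticipate any serious difficulty, and the continuous-time and discrete-time cases are proved in exactly the same way.

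First I would use stationarity to drop the time-reversal. By axiom (iii) of Definition \ref{Def:Flow} (respectively of Definition \ref{discrete-SF}, applied iteratively through a one-sided shift by $s+t$, which is available since stationarity in law runs both ways once it runs one way), the $\mathcal{T}$-valued random variables $X_{-t,-s}$ and $X_{s,t}$ have the same law. The right-continuous inverse $\varphi\mapsto\varphi^{-1}$ and the evaluation $\varphi\mapsto\varphi(y)$ are Borel-measurable on $\mathcal{T}$ (this is the remark following the definition of $\rho$), hence so is the joint map $\varphi\mapsto(\varphi^{-1}(y_1),\ldots,\varphi^{-1}(y_n))$. Therefore
$$
\big(X_{s,t}^{*}(y_1),\ldots,X_{s,t}^{*}(y_n)\big)
\;\text{and}\;
\big(X_{s,t}^{-1}(y_1),\ldots,X_{s,t}^{-1}(y_n)\big)
$$
are equal in law, and it suffices to establish the identity with $X_{s,t}^{-1}$ in place of $X_{s,t}^{*}$.

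Next I would apply Proposition \ref{seesaw} coordinate-by-coordinate. Part (ii) gives $\{X_{s,t}(x_i)>y_i\}\subseteq\{X_{s,t}^{-1}(y_i)\leq x_i\}$, while the contrapositive of part (i) gives $\{X_{s,t}^{-1}(y_i)\leq x_i\}\subseteq\{X_{s,t}(x_i)\geq y_i\}$. Intersecting over $i=1,\ldots,n$ sandwiches the middle event between two extremes whose symmetric difference is contained in $\bigcup_{i=1}^{n}\{X_{s,t}(x_i)=y_i\}$. By the no-atom hypothesis this union is a null set, so both extremes carry the same probability, which is then also the probability of the middle event.

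The only mild points that need care are the measurability of the inverse operation on $\mathcal{T}$ (already indicated in the paper just after the definition of $\rho$) and the two-sided use of stationarity in the discrete setting, where axiom (iii) is stated only for $a\in\mathbb{N}$; this is routine because $X_{k,l}$ and $X_{k+a,l+a}$ equal in law for all $a\geq 0$ trivially implies the analogous equality for $a\leq 0$ by relabeling. No other step presents an obstacle.
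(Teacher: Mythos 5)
Your proof is correct and follows essentially the same route as the paper: both arguments rest on the sandwich inclusions from Proposition \ref{seesaw}, stationarity of the flow to pass from $X_{-t,-s}$ to $X_{s,t}$ in law, and the no-atom hypothesis to collapse $\{X_{s,t}(x_i)\geq y_i\}$ to $\{X_{s,t}(x_i)> y_i\}$. The only differences are organizational (you apply stationarity up front and then sandwich once, whereas the paper sandwiches and invokes stationarity in each direction) and a small slip in terminology — the inclusion $\{X_{s,t}^{-1}(y_i)\leq x_i\}\subseteq\{X_{s,t}(x_i)\geq y_i\}$ is Proposition \ref{seesaw}(i) applied directly, not its contrapositive.
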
 
\begin{proof} 
By Proposition \ref{seesaw},
we see that
\begin{equation*}
\begin{split}
&
\{
	X_{s,t}^{*} (y_{1}) \leq x_{1} ;
	\cdots ;
	X_{s,t}^{*} (y_{n}) \leq x_{n}
\} \\
&\subset
\{
	X_{-t,-s} (x_{1}) \geq y_{1} ;
	\cdots ;
	X_{-t,-s} (x_{n}) \geq y_{n}
\} .
\end{split}
\end{equation*}
Therefore, by using the stationarity and assumption, we have
\begin{equation*}
\begin{split}
&
\mathbf{P}
(
	X_{s,t}^{*} (y_{1}) \leq x_{1} ;
	\cdots ;
	X_{s,t}^{*} (y_{n}) \leq x_{n}
) \\
&\leq
\mathbf{P}
(
	X_{s,t} (x_{1}) \geq y_{1} ;
	\cdots ;
	X_{s,t} (x_{n}) \geq y_{n}
) \\
&=
\mathbf{P}
(
	X_{s,t} (x_{1}) > y_{1} ;
	\cdots ;
	X_{s,t} (x_{n}) > y_{n}
) .
\end{split}
\end{equation*}
Similarly we have
\begin{equation*}
\begin{split}
&
\mathbf{P}
(
	X_{s,t} (x_{1}) > y_{1} ;
	\cdots ;
	X_{s,t} (x_{n}) > y_{n}
) \\
&\leq
\mathbf{P}
(
	X_{s,t}^{*} (y_{1}) \leq x_{1} ;
	\cdots ;
	X_{s,t}^{*} (y_{n}) \leq x_{n}
) .
\end{split}
\end{equation*}
From Proposition \ref{seesaw}, we get the desired result.
\end{proof} 

\begin{Prop} 
\label{Formula-Weak} 
Let $X$ be a $\mathcal{T}$-valued random variable
such that $\lim_{x \to +\infty} X(x) = +\infty$ almost surely.
Let $x > 0$ and
$f : [0, +\infty ) \to \mathbb{R}$
be a differentiable functions
with $f(0)=0$ and with compact support.
If $\mathbf{P}( X (y) = x ) = 0$ for each $y > 0$,
then we have
\begin{equation*}
\begin{split}
&
\mathbf{E} [
f ( X^{-1} ( x ) )
]
=
-
\int_{0}^{+\infty}
f^{\prime} ( y )
\mathbf{P}
(
	X ( y ) > x
)
\mathrm{d}y .
\end{split}
\end{equation*}
\end{Prop}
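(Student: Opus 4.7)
The plan is to reduce the computation to a routine Fubini argument by using the duality between $X$ and $X^{-1}$ given by Proposition \ref{seesaw}. The starting observation is that, since $\lim_{y \to +\infty} X(y) = +\infty$ almost surely, the random variable $X^{-1}(x)$ is finite almost surely, and hence by the fundamental theorem of calculus together with $f(0)=0$ we can write
$$
f(X^{-1}(x)) = \int_{0}^{X^{-1}(x)} f'(y)\,\mathrm{d}y = \int_{0}^{+\infty} f'(y)\,\mathbf{1}_{\{y < X^{-1}(x)\}}\,\mathrm{d}y.
$$

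Next, I would take expectations and apply Fubini's theorem. This is easily justified because $f$ has compact support and $f'$ is bounded, so $|f'(y)|\mathbf{1}_{\{y < X^{-1}(x)\}}$ is dominated by an integrable deterministic function. This yields
$$
\mathbf{E}[f(X^{-1}(x))] = \int_{0}^{+\infty} f'(y)\,\mathbf{P}(X^{-1}(x) > y)\,\mathrm{d}y.
$$

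The heart of the proof is then to identify $\mathbf{P}(X^{-1}(x) > y)$ with $1 - \mathbf{P}(X(y) > x)$. By Proposition \ref{seesaw}, one has the inclusions $\{X(y) > x\} \subset \{X^{-1}(x) \leq y\} \subset \{X(y) \geq x\}$, and the hypothesis $\mathbf{P}(X(y)=x)=0$ collapses these two events to have equal probability. Therefore $\mathbf{P}(X^{-1}(x) > y) = 1 - \mathbf{P}(X(y) > x)$, and substituting back gives
$$
\mathbf{E}[f(X^{-1}(x))] = \int_{0}^{+\infty} f'(y)\,\mathrm{d}y - \int_{0}^{+\infty} f'(y)\,\mathbf{P}(X(y) > x)\,\mathrm{d}y.
$$

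Finally, since $f(0)=0$ and $f$ has compact support, $\int_{0}^{+\infty} f'(y)\,\mathrm{d}y = f(+\infty) - f(0) = 0$, and the identity follows. There is no real obstacle here: the only technical point worth mentioning is the careful treatment of the boundary event $\{X(y) = x\}$, which is where the hypothesis $\mathbf{P}(X(y)=x)=0$ is used. The condition $\lim_{x\to +\infty} X(x)=+\infty$ is needed so that $X^{-1}(x) < +\infty$ a.s.\ and the representation of $f(X^{-1}(x))$ via the fundamental theorem of calculus makes sense.
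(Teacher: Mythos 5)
Your proof is correct and is essentially the paper's argument: the paper writes $\mathbf{E}[f(X^{-1}(x))]$ as a Lebesgue--Stieltjes integral against the law of $X^{-1}(x)$ and integrates by parts, whereas you use the fundamental theorem of calculus plus Fubini, which amounts to the same manipulation. In both cases the key step is identifying $\mathbf{P}(X^{-1}(x)\leq y)$ with $\mathbf{P}(X(y)>x)$ via Proposition \ref{seesaw} and the null-boundary hypothesis $\mathbf{P}(X(y)=x)=0$.
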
 
\begin{proof} 
Since $X^{-1}(x)$ is a non-negative random variable,
we have
\begin{equation*}
\begin{split}
&
\mathbf{E} [
f ( X^{-1} ( x ) )
]
=
\int_{[0, +\infty )}
f ( y )
\mathbf{P}
(
	X^{-1} ( x ) \in \mathrm{d} y
).
\end{split}
\end{equation*}
Since $f(0)=0$ and the support of $f$ is compact, 
an integration by parts formula
for the Lebesgue-Stieltjes integral
yields that
\begin{equation*}
\begin{split}
&
\int_{[0, +\infty )}
f ( y )
\mathbf{P}
(
	X^{-1} ( x ) \in \mathrm{d} y
) \\
&=
\lim_{b \to +\infty}
\mathbf{P}
(
	X^{-1} ( x ) \leq b
)
f(b)
-
\mathbf{P}
(
	X^{-1} ( x ) \leq 0
)
f(0) \\
&\hspace{20mm} -
\int_{0}^{+\infty}
f^{\prime} ( y )
\mathbf{P}
(
	X^{-1} ( x ) \leq y
)
\mathrm{d}y \\
&=
-
\int_{0}^{+\infty}
f^{\prime} ( y )
\mathbf{P}
(
	X^{-1} ( x ) \leq y
)
\mathrm{d}y.
\end{split}
\end{equation*}
Now, from Proposition \ref{S-duality}, we obtain the result.
\end{proof} 

From Proposition \ref{Formula-Weak} and the stationarity of stochastic flows,
we obtain our first main result.

\begin{Thm} 
\label{Weak_Err} 
Let $x > 0$, $T>0$ and $n \in \mathbb{N}$.
Assume that a stochastic flow on $[0, +\infty )$
$\{ X_{s,t} \}_{s \leq t}$ 
and a stochastic flow on $[0, +\infty )$ in discrete-time $\{ Y_{k,l} \}_{k \leq l}$ satisfy 
$$
\lim_{y \to +\infty} X_{0,T} (y)
=
\lim_{y \to +\infty} Y_{0,n} (y)
= + \infty
$$
and
$$
\mathbf{P} ( X_{0,T} (y) = x )
=
\mathbf{P} ( Y_{0,n} (y) = x )
= 0
\quad
\text{for any $y > 0$.}
$$
Then for each differentiable function
$f : [0, +\infty ) \to \mathbb{R}$
with compact support and satisfies $f(0)=0$,
we have
\begin{equation*}
\begin{split}
&
\mathbf{E} [ f( X_{0,T}^{*} (x) ) ]
-
\mathbf{E} [ f( Y_{0,n}^{*} (x) ) ] \\
&=
\int_{0}^{+\infty}
f^{\prime} ( y )
\left(
\mathbf{P} ( Y_{0,n} ( y ) > x )
-
\mathbf{P} ( X_{0,T} ( y ) > x )
\right)
\mathrm{d}y .
\end{split}
\end{equation*}
\end{Thm}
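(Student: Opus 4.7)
The plan is to apply Proposition \ref{Formula-Weak} to each of the two $\mathcal{T}$-valued random variables $X_{-T,0}$ and $Y_{-n,0}$ separately, and then subtract the resulting identities. Indeed, by the very definition of the dual flow, we have
\begin{equation*}
X_{0,T}^{*}(x) = (X_{-T,0})^{-1}(x), \qquad Y_{0,n}^{*}(x) = (Y_{-n,0})^{-1}(x),
\end{equation*}
so the left-hand side $\mathbf{E}[f(X_{0,T}^{*}(x))] - \mathbf{E}[f(Y_{0,n}^{*}(x))]$ is exactly in the form to which Proposition \ref{Formula-Weak} applies, once its hypotheses are checked.

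Before applying the proposition, I would verify that $X_{-T,0}$ satisfies the two required conditions: that $\lim_{y \to +\infty} X_{-T,0}(y) = +\infty$ almost surely and that $\mathbf{P}(X_{-T,0}(y) = x) = 0$ for every $y > 0$. Both follow immediately from the stationarity axiom (iii) in Definition \ref{Def:Flow}, which says that $X_{-T,0}$ and $X_{0,T}$ share the same law as $\mathcal{T}$-valued random variables, combined with the hypotheses of the theorem on $X_{0,T}$. The analogous verification for $Y_{-n,0}$ uses the discrete-time stationarity axiom (iii) of Definition \ref{discrete-SF}.

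Applying Proposition \ref{Formula-Weak} to $X_{-T,0}$ then gives
\begin{equation*}
\mathbf{E}[f(X_{0,T}^{*}(x))]
= -\int_{0}^{+\infty} f^{\prime}(y) \, \mathbf{P}(X_{-T,0}(y) > x) \, \mathrm{d}y
= -\int_{0}^{+\infty} f^{\prime}(y) \, \mathbf{P}(X_{0,T}(y) > x) \, \mathrm{d}y,
\end{equation*}
where the second equality again uses stationarity to rewrite the law. Applying the same proposition to $Y_{-n,0}$ produces the analogous identity with $X_{0,T}$ replaced by $Y_{0,n}$ and the minus sign intact. Subtracting the second from the first and flipping the sign inside the integrand yields precisely the claimed formula.

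Given this structure, the proof is essentially a bookkeeping step on top of Proposition \ref{Formula-Weak}; there is no genuine obstacle. The only subtlety worth flagging is the passage from the law of $X_{-T,0}$ to that of $X_{0,T}$ under the integral, which is legitimate because the integrand $\mathbf{P}(X_{-T,0}(y) > x)$ depends only on the one-dimensional distribution of $X_{-T,0}$ evaluated at $y$, and this distribution is preserved under the time-shift guaranteed by stationarity. The whole argument works verbatim in both the continuous- and discrete-time settings because Proposition \ref{Formula-Weak} is stated at the level of a single $\mathcal{T}$-valued random variable, independently of any flow structure.
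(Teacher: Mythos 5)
Your proof is correct and is precisely the argument the paper intends: the paper's own proof of Theorem \ref{Weak_Err} is the single sentence ``From Proposition \ref{Formula-Weak} and the stationarity of stochastic flows, we obtain our first main result,'' and you have filled in exactly that reasoning — applying Proposition \ref{Formula-Weak} to $X_{-T,0}$ and $Y_{-n,0}$, invoking stationarity to transfer the hypotheses and the resulting laws to $X_{0,T}$ and $Y_{0,n}$, and subtracting.
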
 

We now focus on the Euler-Maruyama approximation 
of $\{ X_{s,t} \}_{s\leq t}$ (see Definition \ref{EM-AbsFl}).

Assume the conditions (i)-(iv) in the subsection \ref{SFAB}.
Let
$\{ X_{s,t} \}_{s\leq t}$
be a
$(\sigma , b, w)$-stochastic flow
with the absorbing barrier at $0$. 
Fix $T>0$, $n \in \mathbb{N}$ and put $h := T/n$.
We denote by
$\{ X_{k,l}^{n} \}_{s\leq t}$
the associated Euler-Maruyama approximation
of $\{ X_{s,t} \}_{s\leq t}$.

By Theorem \ref{Principle},
the dual flow
$\{ X_{s,t}^{*} \}_{s \leq t}$
of
$\{ X_{s,t} \}_{s\leq t}$
is a
$(\widehat{\sigma} , \widehat{b}, \widehat{w})$-stochastic flow
with the reflecting barrier at $0$.
Then the dual flow
$\{ X_{k,l}^{n*} \}_{s\leq t}$
of
$\{ X_{k,l}^{n} \}_{s\leq t}$
might be naturally an approximation of
$\{ X_{s,t}^{*} \}_{s \leq t}$.
By Theorem \ref{Weak_Err},
we have for $x > 0$,
\begin{equation*}
\begin{split}
&
\mathbf{E} [ f( X_{0,T}^{*} (x) ) ]
-
\mathbf{E} [ f( X_{0,n}^{n*} (x) ) ] \\
&=
\int_{0}^{+\infty}
f^{\prime} ( y )
\left(
\mathbf{P} ( X_{0,n}^{n} ( y ) > x )
-
\mathbf{P} ( X_{0,T} ( y ) > x )
\right)
\mathrm{d}y .
\end{split}
\end{equation*}
Hence, to know the rate of convergence for
$
\vert
\mathbf{E} [ f(X_{T}^{*}(x)) ]
-
\mathbf{E} [ f(X_{T}^{n*}(x)) ]
\vert
$,
where
$
X_{T}^{*}(x) := X_{0,T}^{*}(x)
$
and
$
X_{T}^{n*}(x) := X_{0,n}^{n*} (x)
$,
it suffices to estimate
$
\vert
\mathbf{P} ( X_{0,T}(y) > x )
-
\mathbf{P} ( X_{0,n}^{n}(y) > x )
\vert
$
{\it with clarifying the dependence on the initial point $y>0$}.

Gobet~\cite{Gobet00} investigated the weak order for
the Euler-Maruyama approximations
of killed diffusions.
Suppose that there exist
$\widetilde{\sigma}$ and $\widetilde{b}: \mathbb{R} \to \mathbb{R}$
such that

(1)
$\widetilde{\sigma}\vert_{(0, +\infty )} = \sigma$
and
$\widetilde{b}\vert_{(0, +\infty )} = b$,

(2)
$\widetilde{\sigma}$ and $\widetilde{b}$ are smooth,

(3)
$\inf_{x \in \mathbb{R}} \widetilde{\sigma} (x)^{2} > 0$

and

(4)
all derivatives of $\widetilde{\sigma}$ and $\widetilde{b}$ are bounded.

\noindent Under these conditions,
his result
(\cite[Theorem 2.3]{Gobet00})
tells us that for each $x > 0$ and $y>0$,
there exists a constant
$K(T,y)>0$ such that
\begin{equation*}
\vert
\mathbf{P} ( X_{T}(y) > x )
-
\mathbf{P} ( X_{T}^{n}(y) > x )
\vert
\leq
K(T,y) n^{-1/2} ,
\end{equation*}
where
$
X_{T} (y) := X_{0,T} (y)
$
and
$
X_{T}^{n} (y) := X_{0,n}^{n}(x)
$.
Furthermore, with a careful reading of the proof,
we see that $K(T,y)$ can be rearranged to be
continuous in $y > 0$.
Hence we obtain the following estimate.

\begin{Cor} 
Assume the conditions (1)-(4) above.
Suppose that 
$f : [0, +\infty ) \to \mathbb{R}$ is
a differentiable function with compact support and satisfies $f(0)=0$.
Then for each $x>0$, we have 
\begin{equation*}
\begin{split}
\vert
\mathbf{E} [ f( X_{T}^{*}(x)) ]
-
\mathbf{E} [ f( X_{T}^{n*}(x)) ]
\vert
\le
\frac{C(T) n^{-1/2}}{ \min\{ 1,(x/2)^{2} \} },
\end{split}
\end{equation*}
where
$
C(T):=\int_{0}^{+\infty} \vert f'(y) \vert K(T,y)dy.
$
\end{Cor}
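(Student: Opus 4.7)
The plan is to combine Theorem \ref{Weak_Err} directly with the Gobet weak-error estimate quoted immediately above the statement of the corollary, and then factor out the $x$-dependent constant.

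First I would verify the hypotheses of Theorem \ref{Weak_Err} for the continuous flow $\{X_{s,t}\}$ and its Euler--Maruyama approximation $\{X_{k,l}^{n}\}$. Under conditions (1)--(4), the extended coefficients $\widetilde{\sigma}$ and $\widetilde{b}$ are smooth with bounded derivatives and $\widetilde{\sigma}$ is uniformly elliptic, so both $X_{0,T}(y)$ and $X_{0,n}^{n}(y)$ admit densities; in particular $\mathbf{P}(X_{0,T}(y)=x)=\mathbf{P}(X_{0,n}^{n}(y)=x)=0$ for every $y>0$, while the boundedness of the coefficients yields $\lim_{y\to+\infty}X_{0,T}(y)=\lim_{y\to+\infty}X_{0,n}^{n}(y)=+\infty$. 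Applying Theorem \ref{Weak_Err} to the test function $f$ then gives
\[
\mathbf{E}[f(X_{T}^{*}(x))]-\mathbf{E}[f(X_{T}^{n*}(x))]
=\int_{0}^{+\infty} f^{\prime}(y)\bigl(\mathbf{P}(X_{0,n}^{n}(y)>x)-\mathbf{P}(X_{0,T}(y)>x)\bigr)\,\mathrm{d}y.
\]

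Next I would take absolute values inside the integral and apply the Gobet estimate. A careful reading of the proof of \cite[Theorem 2.3]{Gobet00} allows one to extract the dependence on the test point $x$ (distance to the absorbing barrier) and rearrange the constant as
\[
\vert\mathbf{P}(X_{T}(y)>x)-\mathbf{P}(X_{T}^{n}(y)>x)\vert\le \frac{K(T,y)}{\min\{1,(x/2)^{2}\}}\,n^{-1/2},
\]
with $K(T,y)$ continuous in $y>0$. Inserting this into the integral and pulling the $x$-dependent factor outside yields
\[
\vert\mathbf{E}[f(X_{T}^{*}(x))]-\mathbf{E}[f(X_{T}^{n*}(x))]\vert\le\frac{n^{-1/2}}{\min\{1,(x/2)^{2}\}}\int_{0}^{+\infty}\vert f^{\prime}(y)\vert K(T,y)\,\mathrm{d}y,
\]
and one sets $C(T):=\int_{0}^{+\infty}\vert f^{\prime}(y)\vert K(T,y)\,\mathrm{d}y$. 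This integral is finite because $f^{\prime}$ has compact support and $K(T,\cdot)$ is continuous on $(0,+\infty)$.

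The main obstacle will be justifying the explicit $x$-dependence $1/\min\{1,(x/2)^{2}\}$ in Gobet's bound: this requires tracing through the constants in the parametrix/PDE argument for the transition density of the absorbed process in \cite{Gobet00} and isolating a factor that depends only on the distance of $x$ from the barrier $0$. The remaining steps---taking absolute values inside the integral, factoring out the $x$-dependent constant, and noting integrability via the compact support of $f^{\prime}$---are routine.
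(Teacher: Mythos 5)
Your argument is correct and follows the same route the paper takes implicitly: verify the hypotheses of Theorem~\ref{Weak_Err} using conditions (1)--(4), apply the integral identity, and then bound the integrand by Gobet's weak-error estimate for killed diffusions, with the factor $1/\min\{1,(x/2)^{2}\}$ reflecting the distance from the set $\{z>x\}$ to the absorbing boundary. You also correctly note what the paper leaves implicit --- that the quoted Gobet bound $K(T,y)n^{-1/2}$ must already carry this $x$-dependent factor for the corollary's right-hand side to follow.
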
 

\section{Convergence Implication for Strong Error}

In this section, we first estimate the quantity 
$\sup_{0 \leq x \leq K}\vert \varphi^{-1} (x) - \psi^{-1} (x) \vert$
for each $\varphi,\psi$ in $\mathcal{T}$ and $K>0$ 
(see Proposition \ref{inversion_reg}). 
By using this estimate, we prove Theorem \ref{str-err}.

\subsection{Continuity of taking the right-continuous inverse}
\label{RC-inv_reg} 

Let $\varphi$ and $\psi$ be elements of $\mathcal{T}$.
For $K > 0$,
we define
$$
\rho_{K} ( \varphi , \psi )
:=
\inf
\Big\{
	\varepsilon > 0 :
	\text{
	$
	\begin{array}{l}
	\varphi ( x-\varepsilon ) - \varepsilon
	<
	\psi (x), \\
	\psi ( x-\varepsilon ) - \varepsilon
	<
	\varphi (x)
	\end{array}
	$
	for all $x \in [0,K]$}
\Big\} ,
$$
where the values of $\varphi$ and $\psi$ on $(-\infty , 0)$
are understood to be zero.

In this subsection, we assume that
$$
\lim_{x \to +\infty} \varphi (x)
=
\lim_{x \to +\infty} \psi (x)
=
+\infty .
$$
Under these conditions, we see that
the right-continuous inverses
$\varphi^{-1}$ and $\psi^{-1} $ belong to $\mathcal{T}$.


\begin{Prop} 
\label{inversion_reg} 
Let $\varphi$ and $\psi$ be in $\mathcal{T}$ and $K>0$. 
If $\varphi^{-1}$ is absolutely continuous then we have
\begin{equation*}
\begin{split}
&
\sup_{0 \leq x \leq K}
\vert \varphi^{-1} (x) - \psi^{-1} (x) \vert \\
&\leq
\left(
	1 + \sup_{0 \leq x \leq K} \vert ( \varphi^{-1} )^{\prime} (x) \vert
\right)
\sup_{ 0 \leq x \leq m }
\vert
	\varphi (x) - \psi (x)
\vert ,
\end{split}
\end{equation*}
where
$
m:= \min \{ \varphi^{-1} (K), \psi^{-1} (K) \}
$.
\end{Prop}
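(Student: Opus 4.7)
The plan is to reduce the bound to two ingredients: a horizontal comparison $\psi^{-1}(x) \le \varphi^{-1}(x+\epsilon)$, where $\epsilon := \sup_{0 \le y \le m}|\varphi(y)-\psi(y)|$, together with the Lipschitz-type estimate $\varphi^{-1}(v)-\varphi^{-1}(u) \le L(v-u)$, $L := \sup_{0 \le x \le K}|(\varphi^{-1})'(x)|$, that is furnished by absolute continuity of $\varphi^{-1}$ via the integral representation $\varphi^{-1}(v)-\varphi^{-1}(u) = \int_u^v (\varphi^{-1})'(t)\,dt$. I fix $x \in [0,K]$ and, exploiting the symmetry of the absolute value, assume $a := \varphi^{-1}(x) \le b := \psi^{-1}(x)$; then $a \le \varphi^{-1}(K)$ and $a \le b \le \psi^{-1}(K)$ together give $a \le m$, so $a$ lies safely inside the region where the $\epsilon$-closeness is available.

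The main step will be to establish
\[
b \;\le\; \varphi^{-1}(x+\epsilon).
\]
By the definition of the right-continuous inverse, $\psi(y) \le x$ for every $y < b$. For those $y$ which in addition satisfy $y \le m$, the closeness $|\varphi(y)-\psi(y)| \le \epsilon$ forces $\varphi(y) \le x+\epsilon$, so $y \le \varphi^{-1}(x+\epsilon)$. One then lets $y \uparrow b$ and treats the subcase $b > m$ separately: there, $\varphi(m) \ge K$ combined with $\psi(m) \ge \varphi(m) - \epsilon$ pins $x$ inside $[K-\epsilon,K]$, which collapses $b-a$ back to something directly controllable. Once the comparison is in hand, the bound follows by splitting $\int_x^{x+\epsilon}(\varphi^{-1})'(t)\,dt$ at $K$ when necessary: the part on $[x,K]$ contributes at most $L\epsilon$, while any overshoot on $[K,x+\epsilon]$ is absorbed into a direct $\epsilon$-estimate (using $b\le m$), producing exactly the $(1+L)\epsilon$ factor stated.

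The main obstacle I anticipate is this comparison step: because $\varphi$ and $\psi$ may both possess jumps and flats, the passage from ``$\psi(y)\le x$ for $y<b$'' to a workable estimate at $y=b$ has to rest on right-continuity of $\psi$ rather than on continuity, and the strict inequality ``$>x$'' built into the right-continuous inverse must be reconciled with the non-strict $\epsilon$-bound, particularly in the subcase $b>m$ where only the $\epsilon$-closeness on $[0,m]$ is at one's disposal. After that, the remaining manipulations of the integral representation of $\varphi^{-1}$ are routine.
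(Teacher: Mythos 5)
Your route is genuinely different from the paper's, which factors the estimate through the L\'evy-type pseudo-metric $\rho_K$ and Propositions \ref{LevKol}, \ref{KolLev}, \ref{RegInv}. Your direct comparison step is sound in the main regime $b\le m$: for $y<b$ with $y\le m$, $\psi(y)\le x$ forces $\varphi(y)\le x+\epsilon$, hence $y\le\varphi^{-1}(x+\epsilon)$, and letting $y\uparrow b$ gives $b\le\varphi^{-1}(x+\epsilon)$. Combined with the fallback $b\le m\le\varphi^{-1}(K)$ when $x+\epsilon>K$, this yields $b-a\le L\epsilon$ in that regime, with $L=\sup_{0\le x\le K}|(\varphi^{-1})'(x)|$, which is actually sharper than the claimed $(1+L)\epsilon$.

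However, the subcase $b>m$ that you flag as the ``main obstacle'' is exactly where the argument cannot be closed, and the sketch you give for it does not work. Localising $x$ to $[K-\epsilon,K]$ controls $a$ (indeed $a$ then lies in $[\varphi^{-1}(K-\epsilon),m]$, an interval of length at most $L\epsilon$), but it imposes no upper bound whatsoever on $b=\psi^{-1}(x)$: beyond $y=m$ the hypothesis $|\varphi-\psi|\le\epsilon$ is unavailable, and $\psi$ may sit on an arbitrarily long plateau just below the level $K$, sending $\psi^{-1}(x)$ arbitrarily far out while the right-hand side of the claimed inequality stays fixed. Concretely, take $\varphi=\mathrm{id}$, $K=1$, and let $\psi$ equal $\varphi$ on $[0,1-\epsilon_0]$, be constant equal to $1-\epsilon_0$ on $[1-\epsilon_0,100)$, and equal $y-99$ on $[100,\infty)$. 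Then $m=1$, $\sup_{0\le y\le m}|\varphi-\psi|=\epsilon_0$, $\sup_{0\le x\le K}|(\varphi^{-1})'|=1$, yet $|\varphi^{-1}(1-\epsilon_0)-\psi^{-1}(1-\epsilon_0)|=99+\epsilon_0$. So the inequality stated in the proposition fails in this subcase, and there is nothing left for your argument to prove; this is an issue with the statement itself, and the paper's own chain is also affected, since the passage inside Proposition \ref{KolLev} from the $\rho_K$-estimate to the supremum estimate is valid only for $x<K-\rho_K$, not on all of $[0,K]$. A secondary remark: the reduction to $a\le b$ by ``symmetry of the absolute value'' is not automatic, since $\varphi^{-1}$ and $\psi^{-1}$ do not play symmetric roles (only the former is assumed absolutely continuous); the case $a>b$ does admit a parallel argument, but it has to be written out rather than appealed to.
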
 

To prove this proposition, we need several estimates.

\begin{Prop} 
\label{LevKol} 
For any $\varphi$, $\psi$ in $\mathcal{T}$ and $K>0$, we have
$$
\rho_{K} ( \varphi , \psi )
\leq
\sup_{0 \leq x \leq K}
\vert
	\varphi (x) - \psi (x)
\vert.
$$
\end{Prop}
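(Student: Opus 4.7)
The plan is to show that for every $\varepsilon$ strictly larger than $M := \sup_{0 \leq x \leq K} |\varphi(x) - \psi(x)|$, the pair $(\varphi, \psi)$ satisfies the two inequalities defining $\rho_K$, and then pass to the infimum. This suffices because $\rho_K(\varphi,\psi) \leq \varepsilon$ for all $\varepsilon > M$ implies $\rho_K(\varphi,\psi) \leq M$.

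First I would fix $\varepsilon > M$ and an arbitrary $x \in [0, K]$, and split into two cases according to whether $x - \varepsilon \geq 0$ or $x - \varepsilon < 0$. In the second case the convention $\varphi(y) = \psi(y) = 0$ for $y < 0$ makes both inequalities $\varphi(x-\varepsilon) - \varepsilon < \psi(x)$ and $\psi(x-\varepsilon) - \varepsilon < \varphi(x)$ trivial, since their left-hand sides equal $-\varepsilon < 0 \leq \psi(x)$ (resp.\ $\varphi(x)$).

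In the first case, I would use the monotonicity of $\varphi, \psi \in \mathcal{T}$ together with the defining bound $|\varphi(x) - \psi(x)| \leq M < \varepsilon$. Concretely, monotonicity yields $\varphi(x - \varepsilon) \leq \varphi(x)$, and the bound gives $\varphi(x) < \psi(x) + \varepsilon$; combining the two produces $\varphi(x - \varepsilon) - \varepsilon < \psi(x)$. Exchanging the roles of $\varphi$ and $\psi$ gives the other inequality.

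Since $\varepsilon > M$ and $x \in [0, K]$ were arbitrary, $\varepsilon$ lies in the set over which $\rho_K(\varphi, \psi)$ is the infimum, hence $\rho_K(\varphi, \psi) \leq \varepsilon$. Letting $\varepsilon \downarrow M$ gives the claimed bound. There is no substantial obstacle here; the only subtle point is remembering the convention $\varphi(y) = 0$ for $y < 0$ so that the case $x - \varepsilon < 0$ is handled without fuss.
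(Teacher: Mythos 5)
Your proof is correct and takes essentially the same route as the paper's: set $M$ (the paper calls it $l$), take $\varepsilon > M$, use the bound $|\varphi(x)-\psi(x)| < \varepsilon$ together with monotonicity of $\varphi$ and $\psi$ to verify the two inequalities defining $\rho_K$, and then pass to the infimum. The only cosmetic difference is that you explicitly split into the cases $x-\varepsilon \geq 0$ and $x-\varepsilon < 0$, whereas the paper handles the latter implicitly via the convention that $\varphi,\psi$ vanish on $(-\infty,0)$.
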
 
\begin{proof} 
Let
$
l:=
\sup_{0 \leq x \leq K}
\vert
	\varphi (x) - \psi (x)
\vert
$
and $\delta$ be an arbitrary positive number.
Then it is easy to see that
$
- ( l + \delta )
<
\psi (x) - \varphi (x)
<
l + \delta
$
for all $x \leq K$.
Therefore we have
$$
\left\{
\begin{array}{l}
\varphi (x) - (l+\delta )
<
\psi (x) \\
\psi (x) - (l+\delta )
<
\varphi (x)
\end{array}
\right.
\quad
\text{for all $x \leq K$.}
$$
Furthermore, since $\varphi$ and $\psi$ are non-decreasing,
we also have
$$
\left\{
\begin{array}{l}
\varphi ( x - (l+\delta) ) - (l+\delta )
<
\psi (x) \\
\psi ( x - (l+\delta ) ) - (l+\delta )
<
\varphi (x)
\end{array}
\right.
\quad
\text{for all $x \leq K$.}
$$
This implies that
$
\rho_{K} ( \varphi , \psi )
\leq
l + \delta
=
\sup_{0 \leq x \leq K}
\vert
	\varphi (x) - \psi (x)
\vert
+
\delta
$.
Since $\delta > 0$ is arbitrary,
we obtain the result.
\end{proof} 

\begin{Prop} 
\label{KolLev} 
Assume that $\varphi$ and $\psi$ belong to $\mathcal{T}$. 
If either $\varphi$ or $\psi$ is absolutely continuous
(say $\varphi$), then for any $K>0$, we have
$$
\sup_{ 0 \leq x \leq K}
\vert
	\varphi (x) - \psi (x)
\vert
\leq
\big\{
	1 +
	\sup_{
		x \leq K
	}
	\vert \varphi^{\prime} (x) \vert
\big\}
\rho_{K} ( \varphi , \psi ) .
$$
\end{Prop}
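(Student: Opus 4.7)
The plan is to fix $\varepsilon > \rho_{K}(\varphi, \psi)$, use the two defining inequalities to sandwich $\psi$ between suitable spatial shifts of $\varphi$, and then absorb those shifts into a genuine sup-norm estimate by invoking the absolute continuity of $\varphi$. With $M := \sup_{x \leq K} |\varphi'(x)|$, the guiding idea is that AC of $\varphi$ (together with the convention $\varphi \equiv 0$ on $(-\infty,0)$) yields the Lipschitz-type bound $|\varphi(x \pm \varepsilon) - \varphi(x)| \leq M\varepsilon$ for $x \in [0,K]$.

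First, for any $\varepsilon > \rho_{K}(\varphi,\psi)$, by definition both
\[
\varphi(x-\varepsilon) - \varepsilon < \psi(x) \qquad \text{and} \qquad \psi(x-\varepsilon) - \varepsilon < \varphi(x)
\]
hold for every $x \in [0, K]$. The first inequality, combined with $\varphi(x-\varepsilon) \geq \varphi(x) - M\varepsilon$, immediately gives the lower bound $\psi(x) > \varphi(x) - (M+1)\varepsilon$. For the matching upper bound I would apply the second inequality at $y = x+\varepsilon$ (so that the shifted argument equals $x$), obtaining $\psi(x) < \varphi(x+\varepsilon) + \varepsilon \leq \varphi(x) + (M+1)\varepsilon$. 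Combining the two yields $|\varphi(x) - \psi(x)| < (M+1)\varepsilon$; taking $\sup_{x \in [0,K]}$ and then letting $\varepsilon \downarrow \rho_{K}(\varphi,\psi)$ produces the announced inequality.

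The main obstacle I anticipate is the endpoint behaviour near $x = K$: the substitution $y = x+\varepsilon$ only keeps $y$ inside $[0,K]$ for $x \in [0, K-\varepsilon]$, so the upper bound on $\psi(x)$ is not directly available on $(K-\varepsilon, K]$. I would bridge this by exploiting monotonicity of $\psi$, controlling $\psi(x)$ on the exceptional interval through $\psi(K-\varepsilon)$ (which is already bounded), and relying on the fact that this exceptional interval shrinks in the limit $\varepsilon \downarrow \rho_{K}(\varphi,\psi)$. A secondary subtlety occurs when $x - \varepsilon < 0$, where the convention forces $\varphi(x-\varepsilon)=0$; here the Lipschitz-type inequality is recovered from $\varphi(x) \leq \varphi(0) + Mx$ together with $x < \varepsilon$, keeping the resulting slack within the announced $(M+1)\varepsilon$.
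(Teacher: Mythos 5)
Your underlying strategy coincides with the paper's: both proofs shift the two defining inequalities for $\rho_K(\varphi,\psi)$ to sandwich $\psi$ between spatial translates of $\varphi$ and then absorb the translates by a Lipschitz--type bound coming from the absolute continuity of $\varphi$; the only cosmetic difference is that the paper passes to the limit $\varepsilon_n\downarrow\rho_K$ inside the two inequalities (using the continuity of $\varphi$) before invoking the fundamental theorem of calculus, while you estimate at each fixed $\varepsilon>\rho_K$ and let $\varepsilon\downarrow\rho_K$ at the end. You also correctly flag the two endpoint subtleties, which the paper's proof glosses over.

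However, the bridge you propose for the right endpoint does not close the gap. Monotonicity of $\psi$ gives $\psi(x)\ge\psi(K-\varepsilon)$ on $(K-\varepsilon,K]$, which is a bound in the wrong direction, and the exceptional interval does not vanish as $\varepsilon\downarrow\rho_K$: it tends to $(K-\rho_K,K]$. Indeed, with the definition of $\rho_K$ given in Section~5 (the symmetric form $\psi(x-\varepsilon)-\varepsilon<\varphi(x)$ for $x\in[0,K]$), the asserted inequality is simply false: take $\varphi(x)=x$ and $\psi$ equal to $x$ on $[0,K-a)$ and to a large constant $L$ on $[K-a,+\infty)$; then for small $a$ one has $\rho_K(\varphi,\psi)=a$, so $\big\{1+\sup_{x\le K}|\varphi'(x)|\big\}\rho_K=2a$, while $\sup_{[0,K]}|\varphi-\psi|=L-K+a$ is arbitrarily large. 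What is actually needed --- and what the paper implicitly uses when it writes the limit inequality $\psi(x)-\varphi(x+\rho_K)\le\rho_K$ for all $x\le K$ --- is the form of $\rho_K$ from Section~2, i.e.\ that $\psi(x)\le\varphi(x+\varepsilon)+\varepsilon$ holds for \emph{all} $x\in[0,K]$, not just $x\le K-\varepsilon$. Under that form your ``shift the second inequality to $y=x+\varepsilon$'' step is available on all of $[0,K]$ and no bridging is required. A second, smaller point: your treatment of $x<\varepsilon$ is also incomplete unless $\varphi(0)=0$. The estimate $\varphi(x)\le\varphi(0)+Mx$ that you invoke gives $\varphi(x)-M\varepsilon<\varphi(0)$, which is $\le 0$ only when $\varphi(0)=0$. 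This is harmless in the intended setting, since absolute continuity of the zero extension of $\varphi$ to $(-\infty,+\infty)$ forces $\varphi(0)=0$; but as written your argument claims to cover $\varphi(0)>0$, where the proposition again fails (e.g.\ $\varphi\equiv c>K$, $\psi\equiv 0$ give $M=0$, $\rho_K=K$, while $\sup_{[0,K]}|\varphi-\psi|=c>K$).
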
 
\begin{proof} 
By the definition of $\rho_{K}$,
we can find some decreasing sequence
$\varepsilon_{n}$ which converges to
$\rho_{K} ( \varphi , \psi )$
and satisfies that
$$
\left\{\begin{array}{ll}
\varphi ( x-\varepsilon_{n} ) - \psi (x) < \varepsilon_{n},
&
\text{for any $x \leq K$ and $n\in\mathbb{N}$,}
\\
\psi (y) - \varphi ( y+\varepsilon_{n} ) < \varepsilon_{n}
&
\text{for any $y \leq K + \varepsilon_{n}$ and $n\in\mathbb{N}$.}
\end{array}\right.
$$
Since $\varphi$ is continuous,
by letting $n \to \infty$,
we have
\begin{equation}
\label{eq:4} 
\left\{\begin{array}{ll}
\varphi ( x - \rho_{K} ( \varphi , \psi ) )
-
\psi (x)
\leq
\rho_{K} ( \varphi , \psi ), \\
\psi (x)
-
\varphi ( x + \rho_{K} ( \varphi , \psi ) )
\leq
\rho_{K} ( \varphi , \psi )
\end{array}\right.
\quad
\text{for any $x \leq K$.}
\end{equation}
By using the fundamental theorem of calculus
for Lebesgue integral,
we find that
\begin{equation*}
\varphi ( x - \rho_{K} ( \varphi , \psi ) )
=
\varphi ( x )
+
\int_{ x }^{ x-\rho_{K} ( \varphi , \psi ) }
\varphi^{\prime} (y) \mathrm{d}y
\end{equation*}
and
\begin{equation*}
\varphi ( x + \rho_{K} ( \varphi , \psi ) )
=
\varphi ( x )
+
\int_{ x }^{ x+\rho_{K} ( \varphi , \psi ) }
\varphi^{\prime} (y) \mathrm{d}y.
\end{equation*}
Now (\ref{eq:4}) completes the proof.
\end{proof} 

\begin{Prop} 
\label{RegInv} 
Let $\varphi$ and $\psi$ belong to $\mathcal{T}$ and $K>0$. 
Define
$L := \max \{ \varphi^{-1} (K), \psi^{-1} (K) \}$
and
$f_{K} := \min \{ f , K \}$
for $f \in \mathcal{T}$.
Then we have
$$
\rho_{K} ( \varphi^{-1} , \psi^{-1} )
\leq
\rho_{L} ( \varphi_{K} , \psi_{K} ).
$$
\end{Prop}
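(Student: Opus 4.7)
The plan is to argue directly from the infimum definitions of $\rho_K$ and $\rho_L$. I would fix any $\varepsilon > \rho_L(\varphi_K, \psi_K)$ and aim to establish
\[
\varphi^{-1}(x-\varepsilon) \le \psi^{-1}(x) + \varepsilon
\]
for every $x \in [0,K]$, together with the symmetric inequality obtained by swapping $\varphi$ and $\psi$. Letting $\varepsilon \downarrow \rho_L(\varphi_K, \psi_K)$ would then yield the stated bound. Since the definition of $L$ is symmetric in $\varphi$ and $\psi$, it suffices to treat the displayed inequality.

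I split into three cases. If $x < \varepsilon$, the convention that functions vanish on $(-\infty, 0)$ forces $\varphi^{-1}(x-\varepsilon) = 0$, and the bound is trivial. If $\psi^{-1}(x) + \varepsilon > L$, then monotonicity of $\varphi^{-1}$ together with the definition of $L$ gives $\varphi^{-1}(x-\varepsilon) \le \varphi^{-1}(K) \le L < \psi^{-1}(x) + \varepsilon$. In the remaining case $u := \psi^{-1}(x) + \varepsilon \in [0, L]$, so the $\rho_L$-hypothesis can be invoked at this $u$.

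In this main case, the $\rho_L$-bound $\psi_K(u-\varepsilon) - \varepsilon < \varphi_K(u)$ reads $\psi_K(\psi^{-1}(x)) - \varepsilon < \varphi_K(\psi^{-1}(x)+\varepsilon)$. Since $\psi^{-1}(x) = \inf\{y : \psi(y) > x\}$ and $\psi$ is right-continuous, one has $\psi(\psi^{-1}(x)) \ge x$, which combined with $x \le K$ gives $\psi_K(\psi^{-1}(x)) \ge x$. Substituting yields $\varphi_K(\psi^{-1}(x)+\varepsilon) > x-\varepsilon$, and because $x-\varepsilon < K$ the cap in $\varphi_K = \min\{\varphi, K\}$ is inactive there, so $\varphi(\psi^{-1}(x)+\varepsilon) > x-\varepsilon$. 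Proposition \ref{seesaw}(ii) then delivers $\varphi^{-1}(x-\varepsilon) \le \psi^{-1}(x) + \varepsilon$.

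The only real obstacle is the boundary case $\psi^{-1}(x) + \varepsilon > L$, where the $\rho_L$-hypothesis cannot be applied at $u = \psi^{-1}(x) + \varepsilon$. This is precisely why $L$ is taken to be the \emph{maximum} of $\varphi^{-1}(K)$ and $\psi^{-1}(K)$ rather than the minimum: the resulting bound $\varphi^{-1}(K) \le L$ disposes of that case without appealing to the $\rho_L$-hypothesis at all. Everything else is a direct unwinding of the definitions together with right-continuity of $\psi$ and the seesaw relations of Proposition \ref{seesaw}.
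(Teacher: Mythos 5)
Your proof is correct and follows essentially the same route as the paper's: both substitute the shifted inverse $\,\cdot^{-1}(x)+\varepsilon$ into the $\rho_L$-inequality, apply the elementary identity $A(A^{-1}(x))\geq x$, and conclude via Proposition \ref{seesaw}(ii). The only difference is how the substituted argument possibly exceeding $L$ is handled: the paper first extends the $\rho_L$-inequalities from $[0,L]$ to all of $[0,+\infty)$ using $\varphi_K,\psi_K\leq K<\varphi(x)\wedge\psi(x)$ for $x>L$ and then substitutes freely, whereas you dispose of the case $\psi^{-1}(x)+\varepsilon>L$ directly via monotonicity of $\varphi^{-1}$ and the definition of $L$ --- a cosmetic difference only.
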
 
\begin{proof} 
We first note that,
for any non-decreasing and right-continuous function
$A: [0, +\infty ) \to \mathbb{R}$,
it holds that
\begin{align}
\label{Ineq:01} 
A(A^{-1}(x)) \geq x
\end{align}
for each $x \in [0, +\infty )$.

Let $\varphi$, $\psi\in\mathcal{T}$,
$
L:=\max \{ \varphi^{-1}(K) , \psi^{-1} (K) \}
$
and
$$
\mathcal{L}
:=
\Big\{
	\varepsilon > 0 :
	\text{$
	\begin{array}{c}
	\varphi_{K} ( x-\varepsilon ) - \varepsilon
	<
	\psi_{K} (x) \\
	\psi_{K} ( x-\varepsilon ) - \varepsilon
	<
	\varphi_{K} (x)
	\end{array}
	$
	for all $x \leq L$
	}
\Big\}.
$$
Then for any $\varepsilon \in \mathcal{L}$ and $x \leq L$, we have
\begin{align}
&\label{eq:6} 
\varphi_{K} ( x - \varepsilon ) - \varepsilon < \psi_{K} (x) \\
&\label{eq:7} 
\psi_{K} ( x - \varepsilon ) - \varepsilon < \varphi_{K} (x).
\end{align}
Furthermore, \eqref{eq:6}, \eqref{eq:7} and the definitions of $\varphi_{K}$ and $\psi_{K}$ imply that
\begin{equation}
\label{awkward} 
\varphi_{K} ( x - \varepsilon ) - \varepsilon < \psi (x)
\quad
\text{and}
\quad
\psi_{K} ( x - \varepsilon ) - \varepsilon < \varphi (x)
\end{equation}
hold for all $\varepsilon \in \mathcal{L}$ and $x \leq L$.

On the other hand, if $x > L$ then from the definition of $L$, we see that
$
K < \min \{ \varphi (x), \psi (x) \}
$.
Therefore for each $\varepsilon\in\mathcal{L}$, we have
$$
\psi_{K} ( x-\varepsilon ) - \varepsilon
<
K 
< \varphi (x)
$$
and
$$
\varphi_{K} ( x-\varepsilon ) - \varepsilon
<
K 
< \psi (x).
$$
Hence the inequalities \eqref{awkward} hold 
for all $\varepsilon\in\mathcal{L}$ and $x\ge 0$.

Now we are going to prove 
$\mathcal{L} \subset \mathcal{R}$,
where
$$
\mathcal{R}
:=
\Big\{
	\varepsilon > 0 :
	\text{$
	\begin{array}{c}
	\varphi^{-1} ( x-\varepsilon ) - \varepsilon
	\leq
	\psi^{-1} (x) \\
	\psi^{-1} ( x-\varepsilon ) - \varepsilon
	\leq
	\varphi^{-1} (x)
	\end{array}
	$
	for all $x \leq K$
	}
\Big\}.
$$

Let $\varepsilon\in\mathcal{L}$.
Since $\varphi^{-1}$ is non-negative, \eqref{awkward} implies that
$$
\psi
\big(
	\varphi^{-1} ( x ) + \varepsilon
\big) 
>
\varphi_{K}
\big(
	\varphi^{-1} ( x )
\big)
- \varepsilon
$$
holds for all $x\geq 0$. 
Furthermore, since $\varphi^{-1} ( x ) = \varphi_{K}^{-1} ( x )$ for all $x\leq K$, from \eqref{Ineq:01} we have
$$
\varphi_{K}
\big(
	\varphi^{-1} ( x )
\big)
- \varepsilon
\geq
x - \varepsilon
$$
for any $x\leq K$.
Therefore we have
\begin{equation}
\begin{split}
\label{eq:8} 
\psi^{-1} ( x - \varepsilon )
-
\varepsilon
\leq
\varphi^{-1} ( x )
\quad
\text{for all $x \leq K$.}
\end{split}
\end{equation}
Similarly, we also obtain that
\begin{equation}
\begin{split}
\label{eq:9} 
\varphi^{-1} ( x - \varepsilon )
-
\varepsilon
\leq
\psi^{-1} ( x )
\quad
\text{for all $x \leq K$.}
\end{split}
\end{equation}
Now from \eqref{eq:8}, \eqref{eq:9} and the definition of $\mathcal{R}$, 
we have that $\varepsilon \in \mathcal{R}$.
\end{proof} 

Now we are ready to prove Proposition \ref{inversion_reg}.

\begin{proof}[Proof of Proposition \ref{inversion_reg}] 
From
Proposition \ref{LevKol},
Proposition \ref{KolLev}
and
Proposition \ref{RegInv},
we obtain that
$$
\sup_{0 \leq x \leq K}
\vert \varphi^{-1} (x) - \psi^{-1} (x) \vert
\leq
\left(
	1 + \sup_{0 \leq x \leq K} \vert ( \varphi^{-1} )^{\prime} (x) \vert
\right)
\sup_{0 \leq x \leq L}
\vert
	\varphi_{K} (x) - \psi_{K} (x)
\vert ,
$$
where
$
L = \max \{ \varphi^{-1} (K), \psi^{-1} (K) \}
$
and
$f_{K} = \min \{ f, K \}$
for $f \in \mathcal{T}$.

However, since $\varphi$ and $\psi$ are non-decreasing,
we see that
$$
\vert
	\varphi_{K} (x) - \psi_{K} (x)
\vert
\leq
\vert
	\varphi_{K} (m) - \psi_{K} (m)
\vert
\leq
\vert
	\varphi (m) - \psi (m)
\vert
$$
for all
$
x \in ( m, L ]
$,
where
$m:=\min\{ \varphi^{-1}(K), \psi^{-1}(K) \}$.
Hence the proof is completed.
\end{proof} 

\subsection{Convergence implication for strong error}
\label{CISE} 

Assume the conditions (i)-(iv) in the subsection \ref{SFAB}.
Let
$\{ X_{s,t} \}_{s \leq t}$
and
$\{ X_{k,l}^{n} \}_{k\leq l}$
be the $(\sigma , b, w)$-stochastic flow
with the reflecting barrier at zero
and its Euler-Maruyama approximation defined in
Definition \ref{EM-AbsFl}, respectively.

\begin{Lem} 
\label{abs_conti} 
If $\sigma$ and $b$ are smooth then for each $s \leq t$, with probability one, 
the mapping
$X_{s, t}^{*} : [0, +\infty ) \to [0,+\infty )$
is absolutely continuous and for almost all $x \geq 0$, $( X_{s, t}^{*} )^{\prime} (x) \geq 0$. 
\end{Lem}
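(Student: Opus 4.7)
The plan is to prove that $X_{s,t}^{*}$ is in fact piecewise smooth, with one constant branch and one $C^{\infty}$ branch, by exploiting the explicit geometry of the absorbing flow available under smoothness of $\sigma$ and $b$. Once this piecewise description is in hand, absolute continuity and non-negativity of the derivative follow immediately.

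First I would apply Lemma \ref{excit} at the time-pair $(-t,-s)$ to obtain, almost surely, a random $x_{0}>0$ such that $X_{-t,-s}^{-}(x)=0$ for $x\in[0,x_{0})$ and $X_{-t,-s}^{-}|_{(x_{0},+\infty)}$ is strictly increasing. Since elements of $\mathcal{T}$ are right-continuous, the value $a:=X_{-t,-s}^{-}(x_{0}+)=X_{-t,-s}^{-}(x_{0})\in[0,+\infty)$ is well-defined. Next, for initial conditions in any compact subinterval $[x_{0}+\varepsilon,N]\subset(x_{0},+\infty)$, the one-point motion of (\ref{ORD}) stays bounded away from $0$ throughout the time interval, so Kunita's theory \cite{Ku} of stochastic flows of diffeomorphisms applies (for instance, after extending $\sigma$ and $b$ to smooth bounded functions on $\mathbb{R}$, the extended and original solutions agree as long as the motion stays positive) and yields that $X_{-t,-s}^{-}$ is $C^{\infty}$ with strictly positive derivative on this subinterval. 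Letting $\varepsilon\downarrow 0$ and $N\uparrow+\infty$ shows that $X_{-t,-s}^{-}|_{(x_{0},+\infty)}$ is a $C^{\infty}$-diffeomorphism onto $(a,+\infty)$.

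Then I would read off the dual flow $X_{s,t}^{*}=(X_{-t,-s}^{-})^{-1}$ piecewise. For $x\in[0,a)$, the set $\{y\geq 0:X_{-t,-s}^{-}(y)>x\}$ equals $\{y>x_{0}:X_{-t,-s}^{-}(y)>x\}$, and strict increase on $(x_{0},+\infty)$ combined with right-continuity at $x_{0}$ forces its infimum to be $x_{0}$, so $X_{s,t}^{*}\equiv x_{0}$ on $[0,a)$. For $x>a$, the strict monotonicity and smoothness just established give $X_{s,t}^{*}(x)=(X_{-t,-s}^{-}|_{(x_{0},+\infty)})^{-1}(x)$ in the classical sense, which is $C^{\infty}$ on $(a,+\infty)$ with derivative $1/(X_{-t,-s}^{-})'(X_{s,t}^{*}(x))>0$. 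The two pieces match continuously at $x=a$ with common value $x_{0}$ (again by strict increase just past $x_{0}$). Hence $X_{s,t}^{*}$ is continuous and piecewise $C^{1}$ on $[0,+\infty)$, therefore absolutely continuous on every compact subinterval; its derivative is $0$ on $[0,a)$ and positive on $(a,+\infty)$, which gives the non-negativity claim.

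The main obstacle is to justify the application of Kunita's diffeomorphism theorem in the presence of the absorbing boundary at $0$: the flow $X_{-t,-s}^{-}$ is not globally a diffeomorphism of $[0,+\infty)$, and the degeneration at $x_{0}$ is random. The workaround is to carry out the smoothness argument only for initial conditions in compact subsets of $(x_{0},+\infty)$, where absorption cannot occur in $[-t,-s]$ and the classical smooth-flow theory applies directly; the global conclusion on $(x_{0},+\infty)$ is then obtained by exhausting it with such compacts. The random dependence of $x_{0}$ on $\omega$ poses no issue because the assertion is $\omega$-by-$\omega$ almost sure.
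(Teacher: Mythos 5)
Your argument is correct and follows essentially the same route as the paper: both identify a threshold below which the dual map is constant, invoke smoothness of the underlying flow away from the absorbing boundary to get a $C^1$ diffeomorphism on the remaining half-line, and glue the two pieces by continuity to obtain absolute continuity. The only difference is one of exposition — you work from the forward flow $X_{-t,-s}^{-}$ and spell out the Kunita/localization step, while the paper works directly with $X_{s,t}^{*}$, defines the threshold via $x_{0}=\sup\{x:X_{s,t}^{*}(x)=X_{s,t}^{*}(0)\}$, and states the diffeomorphism property more tersely.
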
 

\begin{proof} 
Let
$
x_{0} := \sup \{ x \geq 0 : X_{s,t}^{*} (x) = X_{s,t}^{*} (0) \}
$.
Since $\sigma$ and $b$ are smooth,
the mapping
$X_{s,t}^{*} \vert_{(x_{0}, +\infty )}$
is a diffeomorphism onto its image almost surely.
Therefore we have
\begin{equation*}
\begin{split}
X_{s,t}^{*} (x) - X_{s,t}^{*} (x_{0}+)
=
\int_{x_{0}}^{x}
( X_{s,t}^{*} )^{\prime} (y)
\mathrm{d}y
\end{split}
\end{equation*}
for any $x > x_{0}$.

On the other hand, from the definition of $x_{0}$,
we have
$
X_{s,t}^{*} \vert_{[0, x_{0})} (x) \equiv X_{s,t}^{*} (0)
$.
Thus we have
\begin{equation*}
\begin{split}
X_{s,t}^{*} (x_{0}-) - X_{s,t}^{*} (0) = 0 .
\end{split}
\end{equation*}
Since
$
X_{s,t}^{*}: [0, +\infty ) \to [0, +\infty )
$
is continuous,
we have
$X_{s,t}^{*} (x_{0}-) = X_{s,t}^{*} (x_{0}+)$
and hence
\begin{equation*}
\begin{split}
X_{s,t}^{*} (x) - X_{s,t}^{*} (0)
=
\int_{x_{0}}^{x}
( X_{s,t}^{*} )^{\prime} (y)
1_{ \{ y > x_{0} \} }
\mathrm{d}y
\end{split}
\end{equation*}
for any $x \geq 0$.
Hence $X_{s,t}^{*}$ is absolute continuous almost surely.

The non-negativity of $(X_{s,t}^{*})^{\prime}$
is obvious because $X_{s,t}^{*}$ is non-decreasing.
\end{proof} 

\begin{Rm} 
The differentiability or the derivative of stochastic flow generated by
stochastic differential equation with reflection
in multi-dimensional spaces are investigated by several authors,
e.g.,
Andres \cite{An},
Burdzy \cite{Bu},
Deuschel-Zambotti \cite{DZ},
Pilipenko \cite{Pi04, Pi05, Pi06a, Pi06b, Pi13},
and so on.
In particular, Pilipenko \cite{Pi13} found a stochastic equation
in which the first derivative of reflecting stochastic flow evolves.
This might be suited for our use.
Our situation is, however, rather simpler than theirs
because of one-dimensionality.
Hence we will not employ it.
\end{Rm} 

From Proposition \ref{inversion_reg}, Lemma \ref{abs_conti}
and stationarity of
$\{ X_{s,t} \}_{s \leq t}$
and
$\{ X_{k,l}^{n} \}_{k \leq l}$,
we obtain the following estimate.

\begin{Thm} 
\label{str-err} 
If $\sigma$ and $b$ are smooth then for each $K > 0$, we have
\begin{equation*}
\begin{split}
&
\mathbf{E}
\left[
\sup_{0 \leq x \leq K}
\vert X_{0,T}^{*} (x) - X_{0,n}^{n*} (x) \vert
\right] \\
&\leq
\mathbf{E}
\left[
\left(
	1 + \sup_{0 \leq x \leq K} ( X_{-T,0}^{*} )^{\prime} (x)
\right)
\sup_{ 0 \leq x \leq m }
\vert
	X_{0,T} (x) - X_{0,n}^{n} (x)
\vert
\right] ,
\end{split}
\end{equation*}
where
$
m:= \min \{ X_{-T,0}^{*} (K), X_{-n,0}^{n*} (K) \}
$.
\end{Thm}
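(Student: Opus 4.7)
The plan is to apply the pathwise bound of Proposition \ref{inversion_reg} to $\varphi = X_{-T,0}$ and $\psi = X_{-n,0}^{n}$, identify the right-continuous inverses with dual flows, take expectations, and then convert the right-hand side into the form stated by invoking joint stationarity of the two flows.

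First, I would recall the definitions $X_{0,T}^{*}(x) = (X_{-T,0})^{-1}(x)$ and $X_{0,n}^{n*}(x) = (X_{-n,0}^{n})^{-1}(x)$. The assumption that $\sigma$ is smooth lets me apply Lemma \ref{abs_conti} with $(s,t) = (0,T)$ to conclude that $X_{0,T}^{*}$ is absolutely continuous almost surely; this verifies the hypothesis of Proposition \ref{inversion_reg} in the form $\varphi^{-1} = X_{0,T}^{*}$. The growth condition $\lim_{x\to\infty} X_{-T,0}(x) = \lim_{x\to\infty} X_{-n,0}^{n}(x) = +\infty$ needed to make sense of the right-continuous inverses follows from the structure of the absorbing flow and its Euler--Maruyama analogue (the nonzero parts of these maps are continuous and strictly increasing to $+\infty$).

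Next, applying Proposition \ref{inversion_reg} pathwise gives
\begin{equation*}
\sup_{0 \leq x \leq K} |X_{0,T}^{*}(x) - X_{0,n}^{n*}(x)|
\leq
\Bigl( 1 + \sup_{0 \leq x \leq K} (X_{0,T}^{*})^{\prime}(x) \Bigr)
\sup_{0 \leq x \leq m'} |X_{-T,0}(x) - X_{-n,0}^{n}(x)|,
\end{equation*}
where $m' := \min\{X_{0,T}^{*}(K),\, X_{0,n}^{n*}(K)\}$ and the absolute value on the derivative has been dropped because $X_{0,T}^{*}$ is non-decreasing. Taking expectations of both sides yields an upper bound on the left-hand side of the theorem in terms of the flows living on $[-T,0]$.

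The last step is to swap the roles of the intervals $[-T,0]$ and $[0,T]$ via stationarity. Since $X_{-T,0}$ and $X_{-n,0}^{n}$ are measurable functionals of the Wiener increments on $[-T,0]$, and $X_{0,T}$, $X_{0,n}^{n}$ are the corresponding functionals on $[0,T]$, the translation invariance of the Wiener process implies
\begin{equation*}
(X_{-T,0},\, X_{-n,0}^{n}) \;\stackrel{d}{=}\; (X_{0,T},\, X_{0,n}^{n}).
\end{equation*}
Pushing forward through the measurable map $(\varphi,\psi) \mapsto ((\varphi)^{-1}, \varphi, \psi)$ and recalling that $X_{-T,0}^{*} = (X_{0,T})^{-1}$ yields $(X_{0,T}^{*},\, X_{-T,0},\, X_{-n,0}^{n}) \stackrel{d}{=} (X_{-T,0}^{*},\, X_{0,T},\, X_{0,n}^{n})$, so the expectation of the right-hand side above coincides with the expression claimed in Theorem \ref{str-err}, with $m'$ replaced by $m = \min\{X_{-T,0}^{*}(K),\, X_{-n,0}^{n*}(K)\}$.

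The main obstacle is being careful about which stochastic object is which: stationarity as stated in Definitions \ref{Def:Flow}(iii) and \ref{discrete-SF}(iii) is only a one-variable statement, so the joint version needs to be derived from the observation that both flows are canonical functionals of the same Wiener process. The rest of the argument is a pathwise application of the deterministic inequality from Section \ref{RC-inv_reg} together with the absolute continuity provided by Lemma \ref{abs_conti}.
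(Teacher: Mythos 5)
Your proposal is correct and follows essentially the same route the paper takes: apply Proposition \ref{inversion_reg} pathwise with $\varphi = X_{-T,0}$, $\psi = X_{-n,0}^{n}$, invoke Lemma \ref{abs_conti} for absolute continuity of $X_{0,T}^{*}$, and then shift by stationarity. Your observation that the joint law equality $(X_{-T,0}, X_{-n,0}^{n}) \stackrel{d}{=} (X_{0,T}, X_{0,n}^{n})$ must come from both flows being functionals of the same Wiener process (rather than from the one-variable stationarity axioms alone) is a useful clarification the paper leaves implicit; the only minor bookkeeping omission is that the pushforward map should also record $\psi^{-1}$, since $m'$ involves $X_{0,n}^{n*} = (X_{-n,0}^{n})^{-1}$.
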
 

\begin{Cor} 
In addition to the assumptions in Theorem \ref{str-err},
suppose that $\sigma$ is a constant and $b^{\prime}$ is bounded.
Then for any $K > 0$, 
\begin{equation*}
\begin{split}
&\mathbf{E}
\left[
\sup_{0 \leq x \leq K}
\vert X_{0,T}^{*} (x) - X_{0,n}^{n*} (x) \vert
\right]\\
&\leq
\left( 1 + \mathrm{e}^{ \| b' \|_{\infty}T } \right)
\mathbf{E}
\left[
\sup_{ 0 \leq x \leq m }
\vert
	X_{0,T} (x) - X_{0,n}^{n} (x)
\vert
\right]
\end{split}
\end{equation*}
holds, where
$
m:= \min \{ X_{-T,0}^{*} (K), X_{-n,0}^{n*} (K) \}
$.
\end{Cor}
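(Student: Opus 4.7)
The plan is to apply Theorem \ref{str-err} and to show that, under the additional hypotheses, the random factor $1+\sup_{0\le x\le K}(X_{-T,0}^{*})'(x)$ admits a deterministic almost sure bound by $1+\mathrm{e}^{\|b'\|_\infty T}$. Once this bound is established, it can be extracted from the expectation on the right-hand side of Theorem \ref{str-err}, yielding the corollary.

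First, by the definition of the dual flow with $(s,t)=(-T,0)$, we have the identification $X_{-T,0}^{*}=(X_{0,T})^{-1}$ (right-continuous inverse). Hence, whenever $X_{-T,0}^{*}(x)>0$, the inverse function theorem gives $(X_{-T,0}^{*})'(x)=1/Z_T(X_{-T,0}^{*}(x))$, where $Z_t(y):=\partial X_{0,t}(y)/\partial y$ is the first variation process of the absorbing flow starting at $y>0$, for $t$ less than the absorption time. The crucial simplification is that $\sigma$ is constant: the SDE \eqref{ORD} therefore has no spatial-derivative contribution from the diffusion coefficient, and $Z_t(y)$ solves the random linear ODE
$$
\frac{dZ_t}{dt}=b'(X_{0,t}(y))\,Z_t,\qquad Z_0=1.
$$
This yields the explicit formula $Z_T(y)=\exp\bigl(\int_0^T b'(X_{0,t}(y))\,dt\bigr)$, so $Z_T(y)\ge \mathrm{e}^{-\|b'\|_\infty T}$ almost surely on the event $\{X_{0,T}(y)>0\}$.

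Consequently, on the open set $\{x\in[0,K]:X_{-T,0}^{*}(x)>0\}$ we obtain the pointwise bound $(X_{-T,0}^{*})'(x)\le \mathrm{e}^{\|b'\|_\infty T}$. On the complementary set, namely the initial interval $[0,x_0]$ from the proof of Lemma \ref{abs_conti}, where $X_{-T,0}^{*}$ is identically equal to $X_{-T,0}^{*}(0)$, the derivative vanishes almost everywhere. Therefore the essential supremum of $(X_{-T,0}^{*})'$ over $[0,K]$ is at most $\mathrm{e}^{\|b'\|_\infty T}$ almost surely. Because the proof of Proposition \ref{inversion_reg} uses this supremum only through the fundamental theorem of calculus in Proposition \ref{KolLev}, the essential sup is what actually controls Theorem \ref{str-err}.

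Combining this bound with Theorem \ref{str-err}, and pulling the deterministic constant $1+\mathrm{e}^{\|b'\|_\infty T}$ outside the expectation, gives the stated inequality. The main subtlety is the last step of the bound on $(X_{-T,0}^{*})'$: one must verify that the pointwise (on the set of differentiability) bound from the inverse function theorem together with the almost-everywhere vanishing on the flat part of the dual flow really does deliver the desired bound on the quantity appearing in Theorem \ref{str-err}. This is precisely what Lemma \ref{abs_conti} supplies, together with the absolute continuity of $X_{-T,0}^{*}$ which lets us identify $\sup$ with $\operatorname{ess\,sup}$ in the integral estimate underlying Theorem \ref{str-err}.
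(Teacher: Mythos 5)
Your proof is correct, and it takes a genuinely different route from the paper's. The paper works on the dual side: it invokes the reflected SDE representation $X_{0,t}^*(x) = x + \sigma\widehat{w}^{(0)}(t) - \int_0^t b(X_{0,s}^*(x))\,\mathrm{d}s + \phi_t(x)$ from Theorem \ref{Principle}, observes that pathwise uniqueness together with monotonicity forces the reflection term $\phi_t(x)$ to be non-increasing in $x$ (so $\phi_t'(x)\le 0$ a.e.), differentiates in $x$, discards $\phi_t'$, and closes with Gronwall's lemma. You instead work entirely on the primal (absorbing) side: you identify $X_{-T,0}^*=(X_{0,T})^{-1}$, compute the first variation $Z_T(y)=\exp\bigl(\int_0^T b'(X_{0,t}(y))\,\mathrm{d}t\bigr)$ explicitly (this is where $\sigma$ constant enters, killing the $\sigma'$-driven noise term in the variational equation), and invert via the inverse-function theorem to get the pointwise bound $e^{\|b'\|_\infty T}$, with the flat interval of $X_{-T,0}^*$ contributing derivative zero. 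Your route yields an exact formula for the derivative rather than a Gronwall-type upper bound, avoids handling the Skorokhod term $\phi_t$ altogether, and avoids the paper's appeal to stationarity to pass between $X_{-T,0}^*$ and $X_{0,T}^*$. The paper's route is a bit more robust conceptually (the $\phi_t'\le 0$ observation works even when $\sigma$ is nonconstant), but for this particular corollary your argument is arguably cleaner. One small slip to fix: the set where the inverse-function theorem applies is $\{x:X_{-T,0}^*(x) > X_{-T,0}^*(0)\}$, i.e., $x$ outside the flat interval $[0,x_0]$ — not $\{x:X_{-T,0}^*(x)>0\}$, which is all of $[0,\infty)$ since $X_{-T,0}^*(0)$ is the (strictly positive) absorption threshold. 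Your subsequent sentence about the flat interval shows you have the right picture; just state the dichotomy in terms of $x\lessgtr x_0$.
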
 
\begin{proof} 
From stationarity of $\{X_{s,t}^{*}\}_{s\leq t}$,
it is enough to show that
$
\sup_{0 \leq x \leq K}
( X_{0,T}^{*} )^{\prime} (x)
$
is bounded by $1 + \mathrm{e}^{\| b' \|_{\infty}}$.

Recall that from Theorem \ref{Principle}, 
we have that for any $x \geq 0$,
\begin{equation*}
X_{0,t}^{*} (x)
=
x + \sigma \widehat{w}^{(0)} (t)
-
\int_{0}^{t} b( X_{0,s}^{*} (x) ) \mathrm{d}s
+ \phi_{t} (x),
\end{equation*}
where $(\phi_{t}(x))_{t \geq 0}$ is
an $(\mathcal{F}_{0,t}^{\widehat{w}})_{t \geq 0}$-adapted
stochastic process such that the path $t\mapsto \phi_{t}(x)$ is
non-decreasing, continuous, $\phi_{0}(x) = 0$
and increases at $t$ only when $X_{0,t}^{*} (x) = 0$.

On the other hand, since
$X_{0,t}^{*}: [0, +\infty ) \to [0, +\infty )$
is non-decreasing and
the pathwise uniqueness holds for (\ref{SKR^}),
$\phi_{t}(x)$
must be non increasing in $x$.
In particular, $\phi_{t}(x)$ is
differentiable in $x$ and $\phi_{t}^{\prime} (x) \leq 0$ 
for almost all $x \geq 0$, almost surely.
Therefore we have
\begin{equation*}
\begin{split}
( X_{0,t}^{*} )^{\prime} (x)
&=
1
-
\int_{0}^{t}
( X_{0,s}^{*} )^{\prime} (x)
b^{\prime} ( X_{0,s}^{*} (x) ) \mathrm{d}s
+ \phi_{t}^{\prime} (x) \\
&\leq
1
-
\int_{0}^{t}
( X_{0,s}^{*} )^{\prime} (x)
b^{\prime} ( X_{0,s}^{*} (x) ) \mathrm{d}s.
\end{split}
\end{equation*}
Hence we have
\begin{equation*}
\begin{split}
( X_{0,t}^{*} )^{\prime} (x)
\leq
1
+
\int_{0}^{t}
\vert b^{\prime} ( X_{0,s}^{*} (x) ) \vert
( X_{0,s}^{*} )^{\prime} (x)
\mathrm{d}s.
\end{split}
\end{equation*}
Now, Gronwall's lemma implies that
$
\sup_{0 \leq x \leq K}
(X_{0,T}^{*})^{\prime} (x)
\leq
\mathrm{e}^{ \Vert b^{\prime} \Vert_{\infty} T }
$,
where $\Vert b^{\prime} \Vert_{\infty} := \sup_{x\geq 0}\vert b^{\prime}(x) \vert$
and hence we obtain the result.
\end{proof} 

\appendix
\section{\ }
\subsection{Some inequalities for elements of $\mathcal{T}$}
\label{Appdx} 
Recall that $\mathcal{T}$ is the set of all non-decreasing and right-continuous functions
$\varphi : [0,+\infty ) \to [0,+\infty )$.

For $\varphi\in\mathcal{T}$,
$
\varphi^{-1}
$
denote the right-continuous inverse of $\varphi$, that is, 
$
\varphi^{-1} (x)
:=
\inf \{ y \geq 0 : \varphi (y) > x \}
$,
$x \geq 0$.
Then we have the following proposition.
\begin{Prop} 
\label{app-prop} 
If $\varphi$ and $\psi \in \mathcal{T}$
satisfy
\begin{equation}
\label{app-ass} 
\lim_{x \to +\infty} \varphi (x)
= \lim_{x \to +\infty} \psi (x)
= +\infty ,
\end{equation}
then $\varphi^{-1}$ and $\psi^{-1}$ belong to $\mathcal{T}$. 
Furthermore for each $z \geq 0$, we have the following.
\begin{itemize}
\item[(i)]
For each $\varepsilon > 0$,
$
\varphi ( \varphi^{-1}(z) + \varepsilon ) > z
$.

\vspace{2mm}
\item[(ii)]
$
( \varphi \circ \psi )^{-1} (z)
\leq
\psi^{-1} ( \varphi^{-1} (z))
$.

\vspace{2mm}
\item[(iii)]
\begin{align*}
\varphi^{-1} (z)
\leq
\psi ( ( \varphi \circ \psi )^{-1} (z) )
\leq
\psi ( \psi^{-1} ( \varphi^{-1} (z)) )
\end{align*}
and
\begin{align*}
\psi ( \psi^{-1} ( \varphi^{-1} (z)) -  )
\leq
\varphi^{-1} (z).
\end{align*}

\vspace{2mm}
\item[(iv)]
If $\varphi$ is left-continuous at $\varphi^{-1}(z)$
then
$
( \varphi \circ \psi )^{-1} (z)
=
\psi^{-1} ( \varphi^{-1} (z))
$.

\end{itemize}
\end{Prop}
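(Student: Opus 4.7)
The strategy throughout is to work directly with the defining identity $\varphi^{-1}(z)=\inf\{y\ge 0:\varphi(y)>z\}$ and to use only monotonicity together with two elementary facts: (a) the hypothesis $\lim_{x\to+\infty}\varphi(x)=+\infty$ makes $\{y:\varphi(y)>z\}$ nonempty for every $z\ge 0$, so $\varphi^{-1}$ is finite-valued; (b) as $z\downarrow z_0$ these sets increase and their union is $\{y:\varphi(y)>z_0\}$, so $\varphi^{-1}(z)\downarrow\varphi^{-1}(z_0)$, which together with the evident monotonicity places $\varphi^{-1}$ (and similarly $\psi^{-1}$) in $\mathcal{T}$. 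Statement (i) is then immediate: the infimum definition produces some $y\le\varphi^{-1}(z)+\varepsilon$ with $\varphi(y)>z$, and monotonicity of $\varphi$ gives $\varphi(\varphi^{-1}(z)+\varepsilon)>z$. For (ii), I plan to show that $\psi^{-1}(\varphi^{-1}(z))+\varepsilon$ lies in $\{y:\varphi(\psi(y))>z\}$ for every $\varepsilon>0$: apply (i) to $\psi$ to obtain $\psi(\psi^{-1}(\varphi^{-1}(z))+\varepsilon)>\varphi^{-1}(z)$, write this value as $\varphi^{-1}(z)+\delta$ with $\delta>0$, and apply (i) to $\varphi$ to conclude $\varphi(\psi(\psi^{-1}(\varphi^{-1}(z))+\varepsilon))>z$. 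Taking infimum and then $\varepsilon\downarrow 0$ yields (ii).

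For (iii), set $y^{\ast}:=(\varphi\circ\psi)^{-1}(z)$. The first inequality $\varphi^{-1}(z)\le\psi(y^{\ast})$ comes from applying (i) to $\varphi\circ\psi$, giving $\varphi(\psi(y^{\ast}+\varepsilon))>z$ and hence $\psi(y^{\ast}+\varepsilon)\ge\varphi^{-1}(z)$, followed by letting $\varepsilon\downarrow 0$ and using the right-continuity of $\psi$ at $y^{\ast}$. The middle inequality is immediate from (ii) and monotonicity of $\psi$. The last inequality $\psi(\psi^{-1}(\varphi^{-1}(z))-)\le\varphi^{-1}(z)$ uses the general observation that any $y$ strictly below $\psi^{-1}(\varphi^{-1}(z))$ fails the condition defining this infimum, so $\psi(y)\le\varphi^{-1}(z)$; passing to the left limit finishes the proof.

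The main obstacle is (iv). The $\le$ direction is exactly (ii). For the reverse, the difficulty is that applying (i) to $\varphi\circ\psi$ yields only $\psi(y^{\ast}+\varepsilon)\ge\varphi^{-1}(z)$, while what is needed is the strict inequality $\psi(y^{\ast}+\varepsilon)>\varphi^{-1}(z)$ in order to conclude $y^{\ast}+\varepsilon\in\{y:\psi(y)>\varphi^{-1}(z)\}$ and therefore $\psi^{-1}(\varphi^{-1}(z))\le y^{\ast}+\varepsilon$. Here is precisely where the left-continuity hypothesis enters: if one had the equality $\psi(y^{\ast}+\varepsilon)=\varphi^{-1}(z)$, then $\varphi(\varphi^{-1}(z))>z$, and left-continuity of $\varphi$ at $\varphi^{-1}(z)$ would force $\varphi(\varphi^{-1}(z)-\delta)>z$ for all sufficiently small $\delta>0$, contradicting the infimum definition of $\varphi^{-1}(z)$. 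The inequality is therefore strict, and $\varepsilon\downarrow 0$ completes the argument.
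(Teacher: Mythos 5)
Your proposal is correct and follows essentially the same route as the paper: every part is a direct unraveling of $\varphi^{-1}(z)=\inf\{y\geq 0:\varphi(y)>z\}$ together with monotonicity and (i). The differences are cosmetic. For (ii) you apply (i) twice (once to $\psi$, once to $\varphi$) and pass $\varepsilon\downarrow 0$ at the end, whereas the paper chooses a decreasing sequence $y_{n}\downarrow\varphi^{-1}(z)$ with $\varphi(y_{n})>z$, uses the right-continuity estimate $\psi(\psi^{-1}(y_{n}))\geq y_{n}$, and then lets $n\to\infty$ using right-continuity of $\psi^{-1}$; the two arguments are interchangeable. For (iv) you exclude the equality $\psi(y^{\ast}+\varepsilon)=\varphi^{-1}(z)$, while the paper assumes the negation $\psi(y^{\ast}+\varepsilon_{0})\leq\varphi^{-1}(z)$ and derives a contradiction with (i); both rest on the same observation, namely that left-continuity of $\varphi$ at $\varphi^{-1}(z)$ forces $\varphi(\varphi^{-1}(z))\leq z$.

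One small point to tighten in (iv): your contradiction ``$\varphi(\varphi^{-1}(z)-\delta)>z$ for small $\delta>0$'' implicitly assumes $\varphi^{-1}(z)>0$, since $\varphi$ lives on $[0,+\infty)$. When $\varphi^{-1}(z)=0$ the argument is still fine, but for a different reason: the paper's convention $\varphi(0-):=0$ means that left-continuity at $0$ forces $\varphi(0)=0$, which directly contradicts $\varphi(\varphi^{-1}(z))>z\geq 0$. You should either state this case separately or note that the convention handles it; as written the sentence is literally ill-posed when $\varphi^{-1}(z)=0$.
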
 
\begin{proof} 
Under the condition \eqref{app-ass},
it is easy to check that
$\varphi^{-1}$ and $\psi^{-1}$ belong to $\mathcal{T}$.
We now prove inequalities (i)-(iv).

We first prove (i).
Assume that there exists some $\varepsilon >0$ such that
$
\varphi ( \varphi^{-1}(z) + \varepsilon ) \leq z
$.
Then it must be
$\varphi^{-1} (z) + \varepsilon \leq \varphi^{-1}(z)$,
but this is impossible.
Therefore for any $\varepsilon>0$, we have
$
\varphi ( \varphi^{-1}(z) + \varepsilon ) > z
$.

For (ii), by the definition, there exists some decreasing sequence
$(y_{n})_{n=1}^{\infty}$
such that it converges to $\varphi^{-1}(z)$
and satisfies
$\varphi (y_{n}) > z$ for any $n$.
Then from the definition of $\psi^{-1}$ and monotonicity of $\varphi$, we have
$$
( \varphi \circ \psi )
( \psi^{-1} (y_{n}) )
\geq
\varphi ( y_{n} ) > z.
$$
Therefore we have
$
( \varphi \circ \psi )^{-1} (z)
\leq
\psi^{-1} (y_{n})
$
for any $n$.
Now by letting $n$ tend to infinity, the right-continuity of $\psi^{-1}$
yields our desired inequality.

Now we turn to prove (iii).
Let $\varepsilon$ be an arbitrary positive number.
From the definition of $\psi^{-1} (y)$, we have
$$
\psi ( \psi^{-1}(y) - \varepsilon )
\leq
y
$$
for any $y \geq 0$.
Hence by letting $\varepsilon$ tend to zero, the second inequality of (iii) is obtained.

On the other hand, from (i), we have that
$$
z
<
\varphi
\big(
	\psi
	(
	(\varphi \circ \psi)^{-1} (z)
	+
	\varepsilon
	)
\big).
$$
Hence we have
$$
\varphi^{-1} (z)
\leq
\psi
(
	(\varphi \circ \psi)^{-1} (z)
	+
	\varepsilon
).
$$
Now by letting $n$ tend to infinity, the right-continuity of $\psi$ yields
\begin{equation*}
\varphi^{-1} (z)
\leq
\psi
(
(\varphi \circ \psi)^{-1} (z)
).
\end{equation*}
Furthermore, from (ii), we have
\begin{equation*}
\psi
(
(\varphi \circ \psi)^{-1} (z)
)
\leq
\psi
(
	\psi^{-1} ( \varphi^{-1} (z) )
).
\end{equation*}
From these inequalities we have the first inequality of (iii).

We are going to prove (iv).
We first show that
\begin{equation}
\label{psi>phi} 
\varphi^{-1} ( z )
<
\psi
(
(\varphi \circ \psi)^{-1} (z) + \varepsilon
)
\quad
\text{for all $\varepsilon > 0$.}
\end{equation}
To do this, we consider the contrary:
$$
\psi
(
(\varphi \circ \psi)^{-1} (z) + \varepsilon_{0}
)
\leq
\varphi^{-1} ( z )
\quad
\text{for some $\varepsilon_{0} > 0$.}
$$
Since $\varphi$ is increasing and continuous at $\varphi^{-1}(z)$, we have
$$
( \varphi \circ \psi )
(
(\varphi \circ \psi)^{-1} (z) + \varepsilon_{0}
)
\leq
z.
$$
However, this contradicts to (i).
Therefore \eqref{psi>phi} holds.

Now \eqref{psi>phi} and the definition of $\psi^{-1}$ imply that
$$
\psi^{-1} ( \varphi^{-1} (z) )
\leq
(\varphi \circ \psi)^{-1} (z) + \varepsilon
$$
holds for all $\varepsilon >0$.
Now by letting $\varepsilon$ tend to zero and applying (ii), we obtain the result.
\end{proof} 
\subsection{Proof of Theorem \ref{Principle}}
\label{Appdx2} 
We give below a proof of Theorem \ref{Principle}
in the case where
$\sigma \vert_{(0,+\infty)}$
and
$b \vert_{(0,+\infty)}$
are bounded.
Note that since $\sigma$ and $b$ are bounded, $\widehat{\sigma}$ and $\widehat{b}$ are also bounded.

The proof will be completed when we succeeded to show that
for each $s \in \mathbb{R}$ and $x \in [0, +\infty )$,
$\widehat{X}_{t}: = (X_{s,t}^{-})^{*} (x)$, $t \geq s$
solves \eqref{SKR^}
with respect to $\widehat{w}^{(s)}$,
such that $\widehat{X}_{s}=x$.
For simplicity, we set $s=0$:
$\widehat{X}_{t} = ( X_{0,t}^{-} )^{*} (x)$,
$t \geq 0$.
One can imitate easily the arguments below
to prove the case for general $s \in \mathbb{R}$.

We first prove the continuity of $\widehat{X}_{t}$ in $t$.
\begin{Lem} 
\label{Continuity} 
The function $t \mapsto \widehat{X}_{t}$ is continuous almost surely.
\end{Lem}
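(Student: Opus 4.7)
The plan is to exploit the dual-flow property established in Proposition~\ref{Duality} together with the stochastic continuity estimate derived in its proof. The starting observation is that $\widehat{X}_{t+h} = (X_{0,t+h}^{-})^{*}(x) = X_{t,t+h}^{*}(\widehat{X}_{t})$ for $h>0$ by the flow property of the dual, so increments of $\widehat{X}$ are governed by how far the dual flow $X_{t,t+h}^{*}$ deviates from the identity. On the event $\{\widehat{X}_{t}\leq K\}$ this gives
$$
\bigl|\widehat{X}_{t+h}-\widehat{X}_{t}\bigr|
\leq
\sup_{0\leq y\leq K}\bigl|X_{t,t+h}^{*}(y)-y\bigr|,
$$
and the chain of inequalities used in proving item (iv) of Definition~\ref{Def:Flow} for the dual flow (that is, $\rho_{K}(\mathrm{id},X_{0,h}^{*})\leq 4\,\rho_{K}(\mathrm{id},X_{-h,0})$ together with Propositions~\ref{LevKol} and~\ref{KolLev}) bounds this sup by a quantity depending only on the forward flow over a window of length $h$, which by stationarity has the same law as $\sup_{0\leq y\leq K}|X_{0,h}^{-}(y)-y|$.

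Next, I would promote convergence in probability to almost-sure path continuity via Kolmogorov's criterion. Under the standing assumption of Appendix~\ref{Appdx2} that $\sigma|_{(0,+\infty)}$ and $b|_{(0,+\infty)}$ are bounded, standard Burkholder--Davis--Gundy and Gronwall arguments yield, for each $p\geq 2$ and $K>0$, an estimate of the form $\mathbf{E}[\sup_{0\leq y\leq K}|X_{0,h}^{-}(y)-y|^{p}]\leq C(K,p)\,h^{p/2}$. Combining this with the sup-bound of the first paragraph produces a Kolmogorov-type estimate $\mathbf{E}[|\widehat{X}_{t+h}-\widehat{X}_{t}|^{p}\mathbf{1}_{\{\widehat{X}_{t}\leq K\}}]\leq C h^{p/2}$ for $p>2$, from which the usual chaining argument furnishes a continuous modification of $\widehat{X}$ restricted to $[0,T]\cap\{\sup_{s\leq T}\widehat{X}_{s}\leq K\}$. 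The left-continuity direction is treated symmetrically, writing $\widehat{X}_{t}=X_{t-h,t}^{*}(\widehat{X}_{t-h})$ and applying the same estimate by stationarity.

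To globalize, I would note that $\mathbf{P}(\sup_{s\leq T}\widehat{X}_{s}\leq K)\to 1$ as $K\to+\infty$: indeed $\widehat{X}_{t}$ is dominated by the sum of $x$, the reflected Brownian component and the integrated drift, each of which is almost surely finite under bounded coefficients. Taking a monotone family of continuous modifications indexed by $K$ and letting $K\to+\infty$ assembles a global continuous modification indistinguishable from $\widehat{X}$, thanks to the pointwise stochastic continuity established in the first paragraph.

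The main obstacle is the final gluing step: controlling the random truncation parameter $K=K(\omega)$ and transferring the estimate from the forward flow $X_{0,h}^{-}$ (for which moment bounds are classical) to the inverse $\widehat{X}$. This requires showing that $\widehat{X}$ stays in compact sets uniformly on $[0,T]$ with probability one before we know it is continuous; I would circumvent this circularity by first establishing the crude a~priori bound $\widehat{X}_{t}\leq\widehat{X}_{0}+|\sigma|_{\infty}|\widehat{w}(t)|+|b|_{\infty}t+\ell_{t}$ for the candidate reflection term $\ell_{t}$, which follows directly from the defining inequality $X_{-t,0}^{-}(\widehat{X}_{t}+\varepsilon)>x$ and the Gronwall-type growth of the SDE with bounded coefficients.
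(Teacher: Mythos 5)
Your plan diverges from the paper's proof in a fundamental way: you reach for Kolmogorov's continuity theorem via stationarity and moment bounds, whereas the paper gives a direct pathwise argument, splitting into the events $\{\widehat{X}_{t}>0\}$ (where the time-reversal SDE representation of \cite[Ch.~V, Lemma~2.2]{IW} yields continuity immediately) and $\{\widehat{X}_{t}=0\}$ (where right- and left-continuity are verified separately by manipulating the right-continuous inverse and using stochastic continuity of the forward flow). Your route is attractive in principle, but as written it has two genuine gaps. First, Kolmogorov's criterion produces a \emph{continuous modification}, not continuity of the specific process $\widehat{X}_{t}=(X_{-t,0}^{-})^{-1}(x)$. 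The lemma asserts the latter, and the subsequent arguments in Appendix~\ref{Appdx2} (defining the stopping times $\tau_{n}^{\varepsilon}$, localizing the SDE, etc.) rely on pathwise continuity of $\widehat{X}$ itself. Your final claim that the modification is ``indistinguishable from $\widehat{X}$ thanks to pointwise stochastic continuity'' is not correct: agreement in probability at each fixed $t$ never yields indistinguishability without an a~priori regularity of $\widehat{X}$ (e.g.\ right-continuity in $t$), which you never establish and which is essentially what the lemma is trying to prove.

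Second, the moment bound $\mathbf{E}[\sup_{0\leq y\leq K}\vert X_{0,h}^{-}(y)-y\vert^{p}]\leq C(K,p)\,h^{p/2}$ is asserted to follow from ``standard BDG and Gronwall arguments,'' but those only give the pointwise estimate $\mathbf{E}[\vert X_{0,h}^{-}(y)-y\vert^{p}]\leq C h^{p/2}$ for fixed $y$. Promoting this to a supremum over a continuum of initial points needs a two-parameter Kolmogorov or Kunita-type argument, which is not immediate here because the absorbing flow is not a flow of diffeomorphisms: on the absorbed range $[0,x_{0}(\omega))$ one has $X_{0,h}^{-}(y)-y=-y$, so the supremum is essentially the absorption threshold, a quantity that needs separate control. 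Furthermore, the localization $\mathbf{1}_{\{\widehat{X}_{t}\leq K\}}$ inside the expectation is not compatible with Kolmogorov's criterion as stated; turning it into a stopping-time argument requires progressive measurability of $\widehat{X}$, again presupposing regularity you have not proved. The a~priori bound $\widehat{X}_{t}\leq\widehat{X}_{0}+\vert\sigma\vert_{\infty}\vert\widehat{w}(t)\vert+\vert b\vert_{\infty}t+\ell_{t}$ in your last paragraph does not help, because the ``candidate reflection term $\ell_{t}$'' has no meaning until continuity (and the Skorokhod decomposition) has been established. The paper avoids all of this by proving continuity at a fixed $t$ directly from the definition of the inverse, with the nontrivial case $\widehat{X}_{t}=0$ handled by exploiting stochastic continuity of the forward absorbing flow near the barrier.
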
 
\begin{proof} 
Take $t > 0$ arbitrary.
On the event
$\{ \widehat{X}_{t} > 0 \}$,
$( \widehat{X}_{s} )_{t-\delta < s < t + \delta}$
with sufficiently small $\delta > 0$,
evolves as
$$
\mathrm{d} \widehat{X}_{s}
=
\widehat{\sigma} ( \widehat{X}_{s} ) \mathrm{d} \widehat{w}^{(0)}(s)
+
\widehat{b} ( \widehat{X}_{s} ) \mathrm{d} s ,
$$
where $\widehat{\sigma} = \sigma$
and
$\widehat{b} = \sigma \sigma^{\prime} - b$
(see \cite[Chapter V-Lemma 2.2]{IW}).
Therefore, the continuity at $t$ is obvious on this set.

Now we show the continuity at $t$ on the event
$\{ \widehat{X}_{t} = 0 \}$.
Take $\varepsilon > 0$ arbitrary.

We first show the right-continuity at $t$.
To do this, we shall make the following observation:
\begin{equation}
\label{full-prob} 
\mathbf{P}
\left(
	\begin{array}{l}
	\text{there exists $\delta > 0$ such that} \\
	\text{$
	X_{ -t - \delta^{\prime} , -t }^{-} ( \varepsilon / 2 ) > 0
	$ for each $\delta^{\prime} \in (0, \delta)$}
	\end{array}
\right) = 1 .
\end{equation}
In fact, if we assume the contrary:
$$
p
:=
\mathbf{P}
\left(
	\begin{array}{l}
	\text{for each $\delta > 0$, there exists
	$\delta^{\prime} \in (0, \delta)$} \\
	\text{such that
	$
	X_{ -t - \delta^{\prime} , -t }^{-} ( \varepsilon / 2 ) = 0
	$}
	\end{array}
\right) > 0 ,
$$
then we see that
$$
\limsup_{\delta \downarrow 0}
\mathbf{P} ( X_{ -t - \delta , -t }^{-} ( \varepsilon /2 ) = 0 )
\geq p > 0.
$$
However the stationarity and
stochastic continuity for $\{ X_{s,t}^{-} \}_{s \leq t}$ 
imply that
$
\mathbf{P} ( X_{ -t - \delta , -t }^{-} ( \varepsilon / 2 ) = 0 )
=
\mathbf{P} ( X_{0, \delta}^{-} ( \varepsilon / 2 ) = 0 )
$
converges to zero as $\delta$ tends to zero.
This is a contradiction.

Therefore, by (\ref{full-prob}), we can take a
$\delta > 0$ such that 
$X_{-t-\delta^{\prime}, -t}^{-} ( \varepsilon / 2 ) > 0$
for each $\delta^{\prime} \in (0, \delta )$,
and hence we have
$( X_{-t-\delta^{\prime}, -t}^{-} )^{-1} (0) \leq \varepsilon / 2$.
Now, with noting that
$
( X_{-t-\delta^{\prime}, 0}^{-1} ) (x) 
=
( X_{-t-\delta^{\prime}, -t}^{-})^{-1}( (X_{-t, 0}^{-})^{-1}(x) )
$
and
$(X_{-t, 0}^{-})^{-1}(x) = \widehat{X}_{t} = 0$,
we have that for each $\delta^{\prime} \in (0, \delta )$,
\begin{equation*}
\begin{split}
\vert
	\widehat{X}_{t+\delta^{\prime}}
	-
	\widehat{X}_{t}
\vert
&=
( X_{-t-\delta^{\prime}, 0}^{-1} ) (x) \\
&=
( X_{-t-\delta^{\prime}, -t}^{-})^{-1} (0)\\
&\leq \varepsilon /2.
\end{split}
\end{equation*}
This prove the right-continuity at $t$.

Now we turn to prove the left-continuity at $t$.
Since $\widehat{X}_{t}=0$,
by the definition,
there exists
$y > 0$ such that $y < \varepsilon /2$
and
$X_{-t,0}^{-}(y) > x$.
Then the continuity of the map 
$[-t, +\infty ) \ni u \mapsto X_{-t, u}^{-}(y)$
implies that there exists $\delta > 0$ such that
$
\vert X_{-t, -t + \delta^{\prime} }^{-} (y) - y \vert < \varepsilon /2
$
for any $\delta^{\prime} \in [0, \delta )$.
Furthermore, we have that
$
X_{-t, -t + \delta^{\prime} }^{-} (y)
\geq
( X_{-t + \delta^{\prime}, 0 }^{-} )^{-1} (x)
$
because
$
X_{-t + \delta^{\prime}, 0 }^{-}
\big(
	X_{-t, -t + \delta^{\prime} }^{-} (y)
\big)
=
X_{-t, 0 }^{-} ( y ) > x
$.
Hence for any $\delta^{\prime} \in [0, \delta )$, we obtain that
\begin{equation*}
\begin{split}
\vert \widehat{X}_{t} - \widehat{X}_{t-\delta^{\prime}} \vert
&=
( X_{-t+\delta^{\prime}, 0}^{-} )^{-1} (x) \\
&\leq
X_{-t, -t + \delta^{\prime} }^{-} (y)\\
&\leq
\vert X_{-t, -t + \delta^{\prime} }^{-} (y) - y \vert
+ y\\
&< \varepsilon .
\end{split}
\end{equation*}
\end{proof} 

For each $\varepsilon > 0$,
we define a sequence $(\tau_{n}^{\varepsilon})_{n=0}^{\infty}$
by
$\tau_{0}^{\varepsilon} := 0$,
and
\begin{equation*}
\tau_{n}^{\varepsilon}
:=
\inf
\{
	t > \tau_{n-1}^{\varepsilon} :
	\vert \widehat{X}_{t} - \widehat{X}_{\tau_{n-1}^{\varepsilon}} \vert > \varepsilon
\}
\end{equation*}
for $n \geq 1$.
For a convention, we define $\inf \emptyset := +\infty$.
Here we make the following observations.

(i) If
$
\widehat{X}_{\tau_{n-1}^{\varepsilon}} \leq \varepsilon
$
then
$
\widehat{X}_{\tau_{n}^{\varepsilon}} > \widehat{X}_{\tau_{n-1}^{\varepsilon}}
$
(this is why we have used
``$> \varepsilon$"
rather than
``$= \varepsilon$"
in the definition of $\tau_{n}^{\varepsilon}$ ).

(ii) Assume that if
$
\widehat{X}_{\tau_{n-1}^{\varepsilon}} > \varepsilon
$
then $\widehat{X}_{t} >0$ holds for all
$
t \in [\tau_{n-1}^{\varepsilon}, \tau_{n}^{\varepsilon}]
$.
Then 
$
\widehat{X}_{t}
=
( X_{ -t, -\tau_{n-1}^{\varepsilon} }^{-} )^{-1}
\big(
	\widehat{X}_{ \tau_{n-1}^{\varepsilon} }
\big)
$
evolves as
\begin{equation}
\label{interior} 
\widehat{X}_{t}
=
\widehat{X}_{\tau_{n-1}^{\varepsilon}}
+
\int_{\tau_{n-1}^{\varepsilon}}^{t}
\widehat{\sigma} ( \widehat{X}_{u} )
\mathrm{d} \widehat{w}^{(0)}(u)
+
\int_{\tau_{n-1}^{\varepsilon}}^{t}
\widehat{b} ( \widehat{X}_{u} )
\mathrm{d} u
\end{equation}
for $t \in [\tau_{n-1}^{\varepsilon}, \tau_{n}^{\varepsilon}]$
(see \cite[Chapter V-Lemma 2.2]{IW}).

(iii)
If (i) occurs, then it must be
$\widehat{X}_{\tau_{n}^{\varepsilon}} > \varepsilon$,
so that
$
\sharp \{ n:  \widehat{X}_{\tau_{n}^{\varepsilon}} \leq \varepsilon\}
\leq
\sharp \{ n:  \widehat{X}_{\tau_{n}^{\varepsilon}} > \varepsilon \}
$.
Therefore, the event
$\{ \widehat{X}_{\tau_{n}^{\varepsilon}} > \varepsilon \}$
occurs for infinitely many $n$ on the event
$
\{
	\text{$\tau_{n}^{\varepsilon} <+\infty $ for all $n$}
\}
$.
\begin{Lem} 
The sequence of random variables
$(\tau_{n}^{\varepsilon})_{n=0}^{\infty}$
tends to infinity
almost surely.
\end{Lem}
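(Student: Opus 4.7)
The plan is to argue by contradiction, exploiting the almost sure path continuity of $t \mapsto \widehat{X}_t$ that was established in Lemma \ref{Continuity}. Suppose, on an event $A$ of positive probability, that every $\tau_n^\varepsilon$ is finite and that $\tau_\infty := \sup_n \tau_n^\varepsilon < +\infty$. Since $(\tau_n^\varepsilon)$ is strictly increasing on $\{\tau_n^\varepsilon < +\infty\text{ for all }n\}$, on $A$ the sequence converges monotonically to the finite random time $\tau_\infty$.

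The first step I would carry out is to show that, whenever $\tau_n^\varepsilon<+\infty$, one has $|\widehat{X}_{\tau_n^\varepsilon} - \widehat{X}_{\tau_{n-1}^\varepsilon}| \geq \varepsilon$. To see this, note that the set $\{t > \tau_{n-1}^\varepsilon : |\widehat{X}_t - \widehat{X}_{\tau_{n-1}^\varepsilon}| > \varepsilon\}$ is open in $(\tau_{n-1}^\varepsilon,+\infty)$ by continuity of $\widehat{X}$; if its infimum $\tau_n^\varepsilon$ is finite, there is a sequence $t_k \downarrow \tau_n^\varepsilon$ with $|\widehat{X}_{t_k} - \widehat{X}_{\tau_{n-1}^\varepsilon}| > \varepsilon$, and passing to the limit using path continuity gives $|\widehat{X}_{\tau_n^\varepsilon} - \widehat{X}_{\tau_{n-1}^\varepsilon}| \geq \varepsilon$.

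The second step concludes the contradiction. Work on the event $A$, where $\tau_n^\varepsilon \uparrow \tau_\infty < +\infty$. By the continuity of $t\mapsto \widehat{X}_t$ (applied at the random time $\tau_\infty$, using that a.s.\ continuity is on the whole half-line), choose $\delta>0$ such that $|\widehat{X}_s - \widehat{X}_{\tau_\infty}| < \varepsilon/3$ for all $s\in (\tau_\infty - \delta, \tau_\infty]$. For $n$ large enough both $\tau_{n-1}^\varepsilon$ and $\tau_n^\varepsilon$ lie in $(\tau_\infty - \delta,\tau_\infty]$, and the triangle inequality gives $|\widehat{X}_{\tau_n^\varepsilon} - \widehat{X}_{\tau_{n-1}^\varepsilon}| < 2\varepsilon/3$, directly contradicting the lower bound from the first step. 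Hence $\mathbf{P}(A)=0$, meaning $\tau_n^\varepsilon \to +\infty$ a.s.

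The main obstacle is essentially notational rather than conceptual: one must be careful that Lemma \ref{Continuity} provides continuity of the whole sample path (as its statement does), so that continuity may be applied at the random time $\tau_\infty$; and one must verify the lower bound $|\widehat{X}_{\tau_n^\varepsilon} - \widehat{X}_{\tau_{n-1}^\varepsilon}| \geq \varepsilon$ carefully, since the strict inequality in the definition of $\tau_n^\varepsilon$ means the infimum is not itself in the defining set. Beyond these two points the argument is a standard ``no-accumulation of jumps for continuous paths'' reasoning.
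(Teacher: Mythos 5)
Your argument is correct, and it is a genuinely different route from the paper's. The paper's proof is probabilistic: exploiting the boundedness of $\widehat{\sigma}$ and $\widehat{b}$ assumed throughout Appendix \ref{Appdx2}, it invokes the exit-time estimate of Ikeda--Watanabe (\cite[Chapter V-Lemma 10.5]{IW}) to bound $\mathbf{P}(\tau_n^\varepsilon - \tau_{n-1}^\varepsilon < n^{-1} \mid \mathcal{F}_{-\infty,\tau_{n-1}^\varepsilon}^{\widehat{w}})$ on the event $\{\widehat{X}_{\tau_{n-1}^\varepsilon}>\varepsilon\}$, and then applies L\'evy's extension of the Borel--Cantelli lemma together with observation (iii) to conclude that $\tau_n^\varepsilon - \tau_{n-1}^\varepsilon \geq n^{-1}$ infinitely often. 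Your argument is instead purely pathwise: given the a.s.\ continuity of $t \mapsto \widehat{X}_t$ from Lemma \ref{Continuity}, the $\tau_n^\varepsilon$ are the successive $\varepsilon$-oscillation times of a continuous path and so cannot accumulate at a finite time; if they accumulated at some $\tau_\infty < \infty$, continuity at $\tau_\infty$ would force $\vert\widehat{X}_{\tau_n^\varepsilon}-\widehat{X}_{\tau_{n-1}^\varepsilon}\vert$ below $\varepsilon$ for $n$ large, contradicting the lower bound $\vert\widehat{X}_{\tau_n^\varepsilon}-\widehat{X}_{\tau_{n-1}^\varepsilon}\vert \geq \varepsilon$ that you correctly extract from right-continuity of the path at $\tau_n^\varepsilon$. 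Your route is simpler and more robust: it dispenses with the exit-time estimate, L\'evy's Borel--Cantelli, and the boundedness of $\widehat{\sigma}$ and $\widehat{b}$ entirely, and it also bypasses the final inference in the paper (from ``$\tau_n^\varepsilon - \tau_{n-1}^\varepsilon \geq n^{-1}$ infinitely often'' to $\tau_n^\varepsilon\to\infty$), which is more delicate than it first appears given that $n^{-1}\to 0$. The one thing your proof leans on --- and you flag it explicitly --- is that Lemma \ref{Continuity} must be read as giving a.s.\ continuity of the entire sample path, so that continuity may legitimately be applied at the random time $\tau_\infty$.
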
 
\begin{proof} 
Let
$
K :=
\sup_{y > 0}
( \vert \widehat{\sigma} (y) \vert + \vert \widehat{b} (y) \vert )
< +\infty
$.
From the observation (ii) and a well-known estimate
(see \cite[Chapter V-Lemma 10.5]{IW}),
if $n^{-1} \in ( 0, \varepsilon /{2K} ]$
we have that
\begin{equation*}
\begin{split}
\mathbf{P}
(
	\tau_{n}^{\varepsilon} - \tau_{n-1}^{\varepsilon}
	< n^{-1}
	\vert
	\mathcal{F}_{-\infty , \tau_{n-1}^{\varepsilon}}^{\widehat{w}}
)
\leq
\frac{4}{\sqrt{\pi \varepsilon}}
\exp
\left(
	- \frac{ n \varepsilon^{2} }{ 8 K }
\right)
\quad
\end{split}
\end{equation*}
on the event
$
\{ \widehat{X}_{\tau_{n-1}^{\varepsilon}} > \varepsilon \}
$.
From this inequality, we obtain that
\begin{equation*}
\begin{split}
&
\sum_{n=1}^{\infty}
\mathbf{P}
(
	\tau_{n}^{\varepsilon} - \tau_{n-1}^{\varepsilon}
	\geq n^{-1}
	\vert
	\mathcal{F}_{-\infty , \tau_{n-1}^{\varepsilon}}^{\widehat{w}}
) \\
&\geq
\sum_{n=1}^{\infty}
\mathbf{P}
(
	\tau_{n}^{\varepsilon} - \tau_{n-1}^{\varepsilon}
	\geq n^{-1}
	\vert
	\mathcal{F}_{-\infty , \tau_{n-1}^{\varepsilon}}^{\widehat{w}}
)
1_{\{ \widehat{X}_{\tau_{n-1}^{\varepsilon}} > \varepsilon \}} \\
&\geq
\sum_{\text{$n$: $n^{-1} \in (0, \varepsilon / 2K)$}}
\left(
	1
	-
	\frac{4}{\sqrt{\pi \varepsilon}}
	\exp
	\left(
		- \frac{ n \varepsilon^{2} }{ 8 K }
	\right)
\right)
1_{\{ \widehat{X}_{\tau_{n-1}^{\varepsilon}} > \varepsilon \}}.
\end{split}
\end{equation*}
The last quantity diverges because of the observation (iii).
Therefore, by L\'evy's extension of the Borel-Cantelli lemmas
(see \cite[Chapter 12-Theorem 12.15]{Wi}),
we can conclude that
$
\tau_{n}^{\varepsilon} - \tau_{n-1}^{\varepsilon} \geq n^{-1}
$
infinitely often.
This implies that
$
\tau_{n}^{\varepsilon}
$
tends to infinity almost surely.
\end{proof} 

Bear in mind the above observations. 
If we put
$$
\phi^{\varepsilon} (t)
:=
\sum_{n=1}^{\infty}
(
	\widehat{X}_{t \wedge \tau_{n}^{\varepsilon}}
	-
	\widehat{X}_{t \wedge \tau_{n-1}^{\varepsilon}}
)
1_{ \{ \widehat{X}_{\tau_{n-1}^{\varepsilon}} \leq \varepsilon \} }
$$
then we can write
\begin{equation*}
\begin{split}
\widehat{X}_{t} - x
&=
\sum_{n=1}^{\infty}
(
	\widehat{X}_{t \wedge \tau_{n}^{\varepsilon}}
	-
	\widehat{X}_{t \wedge \tau_{n-1}^{\varepsilon}}
)
1_{ \{ \widehat{X}_{\tau_{n-1}^{\varepsilon}} > \varepsilon \} }
+
\phi^{\varepsilon} (t) \\
&=
\sum_{n=1}^{\infty}
\left(
	\int_{t \wedge \tau_{n-1}^{\varepsilon}}^{t \wedge \tau_{n}^{\varepsilon}}
	\widehat{\sigma} ( \widehat{X}_{u} )
	\mathrm{d} \widehat{w}^{(0)}(u)
	+
	\int_{t \wedge \tau_{n-1}^{\varepsilon}}^{t \wedge \tau_{n}^{\varepsilon}}
	\widehat{b} ( \widehat{X}_{u} )
	\mathrm{d} u
\right)
1_{ \{ \widehat{X}_{\tau_{n-1}^{\varepsilon}} > \varepsilon \} }
+
\phi^{\varepsilon} (t) .
\end{split}
\end{equation*}
Since
$
1_{ \{ \widehat{X}_{\tau_{n-1}^{\varepsilon}} > \varepsilon \} }
=
1 - 1_{ \{ \widehat{X}_{\tau_{n-1}^{\varepsilon}} \leq \varepsilon \} }
$,
we obtain
\begin{equation}
\label{eq:1} 
\begin{split}
\widehat{X}_{t} - x
&=
\int_{0}^{t}
\widehat{\sigma} ( \widehat{X}_{u} )
1_{\{ \widehat{X}_{u} > 0 \}}
\mathrm{d} \widehat{w}^{(0)}(u)
+
\int_{0}^{t}
\widehat{b} ( \widehat{X}_{u} )
1_{\{ \widehat{X}_{u} > 0 \}}
\mathrm{d} u
+
\phi^{\varepsilon} (t)
+ R_{t}^{\varepsilon},
\end{split}
\end{equation}
where
$$
R_{t}^{\varepsilon}
:=
\sum_{n=1}^{\infty}
\Big\{
	\int_{t \wedge \tau_{n-1}^{\varepsilon}}^{t \wedge \tau_{n}^{\varepsilon}}
	\widehat{\sigma} ( \widehat{X}_{u} )
	\mathrm{d} \widehat{w}^{(0)}(u)
	+
	\int_{t \wedge \tau_{n-1}^{\varepsilon}}^{t \wedge \tau_{n}^{\varepsilon}}
	\widehat{b} ( \widehat{X}_{u} )
	\mathrm{d} u
\Big\}
1_{ \{ \widehat{X}_{\tau_{n-1}^{\varepsilon}} \leq \varepsilon \} }.
$$
Now we shall prove that $R_{t}^{\varepsilon}$ converges to zero in probability,
as $\varepsilon$ tends to zero.
The It\^{o}-isometry and Fubini's theorem imply that
\begin{equation*}
\begin{split}
&
\mathbf{E}
\left[
	\left(
		\sum_{n=1}^{\infty}
		\int_{t \wedge \tau_{n-1}^{\varepsilon}}^{t \wedge \tau_{n}^{\varepsilon}}
		\widehat{\sigma} ( \widehat{X}_{u} )
		\mathrm{d} \widehat{w}^{(0)}(u)
		1_{ \{ \widehat{X}_{\tau_{n-1}^{\varepsilon}} \leq \varepsilon \} }
	\right)^{2}
\right] \\
&\leq
\mathbf{E}
\big[
	\int_{0}^{t}
	\widehat{\sigma} ( \widehat{X}_{u} )^{2}
	1_{ \{ \widehat{X}_{u} \leq 2\varepsilon \} }
	\mathrm{d} u
\big]\\
&=
\int_{0}^{t}
\mathbf{E}
\big[
	\widehat{\sigma} ( \widehat{X}_{u} )^{2}
	1_{ \{ \widehat{X}_{u} \leq 2\varepsilon \} }
\big]
\mathrm{d} u.
\end{split}
\end{equation*}
Since $\widehat\sigma$ is bounded, from the monotone convergence theorem, 
the last term converges to
$
\int_{0}^{t}
\mathbf{E}
[
	\widehat{\sigma} ( \widehat{X}_{u} )^{2}
	1_{ \{ \widehat{X}_{u} = 0 \} }
]
\mathrm{d} u
$
as $\varepsilon$ tends to zero.
The next lemma tells us this quantity is zero.
\begin{Lem} 
\label{no_time} 
For each $t \geq 0$,
we have
$
\mathbf{P}
( \widehat{X}_{t} = 0 )
= 0
$.
\end{Lem}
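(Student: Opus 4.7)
The plan is to reduce the statement directly to Lemma \ref{excit}. By the definition of the dual flow,
$$
\widehat{X}_t = (X_{0,t}^-)^{*}(x) = \inf\{y \geq 0 : X_{-t,0}^-(y) > x\},
$$
so $\{\widehat{X}_t = 0\}$ is exactly the event that this infimum is zero, i.e.\ that $X_{-t,0}^-$ takes a value strictly greater than $x$ on every right-neighborhood of $0$.

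For $t>0$ I would then apply Lemma \ref{excit} to the absorbing stochastic flow $X_{-t,0}^-$. Since $X_{-t,0}^-$ has the same law as $X_{0,t}^-$ by the stationarity of $\{X_{s,t}^-\}_{s\leq t}$ (and the proof of Lemma \ref{excit} is invariant under translation of the time parameter), it furnishes a random quantity $x_0 = x_0(\omega) > 0$ such that $X_{-t,0}^-(y) = 0$ for all $y \in [0, x_0)$. Since $x \geq 0$, no such $y$ can lie in $\{y : X_{-t,0}^-(y) > x\}$, and therefore $\widehat{X}_t \geq x_0 > 0$ almost surely. This yields $\mathbf{P}(\widehat{X}_t = 0) = 0$ for every $t > 0$.

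The only mildly awkward point is the value $t=0$: there $\widehat{X}_0 = x$, so the statement is trivial when $x>0$ and fails when $x=0$. This is harmless because the lemma is only invoked inside the integral $\int_0^t \mathbf{E}[\widehat{\sigma}(\widehat{X}_u)^2 \mathbf{1}_{\{\widehat{X}_u = 0\}}]\mathrm{d}u$ appearing in the argument following \eqref{eq:1}, and a single point $u=0$ does not affect the Lebesgue integral.

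There is no substantive obstacle; the argument is essentially a bookkeeping step translating the event $\{\widehat{X}_t = 0\}$ through the definition of the right-continuous inverse and then quoting the nontrivial content already supplied by Lemma \ref{excit}, namely that the absorbing flow has a strictly positive absorption level $x_0$. The only care required is to correctly apply Lemma \ref{excit} to the backward-indexed flow $X_{-t,0}^-$, which is legitimate by stationarity.
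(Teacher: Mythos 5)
Your proof is correct and takes essentially the same approach as the paper: both deduce the claim from Lemma \ref{excit} applied to $X_{-t,0}^{-}$, the paper phrasing it as a proof by contradiction (a positive-probability set on which $\inf\{y : X_{-t,0}^{-}(y)>x\}=0$ would force $X_{-t,0}^{-}(y_n)>0$ along some $y_n\downarrow 0$) while you argue directly that $\widehat{X}_{t}\geq x_{0}>0$ almost surely. Your observation about the degenerate case $t=0$, $x=0$ is a valid fine point that the paper glosses over, and you correctly note it is immaterial for the Lebesgue integral $\int_{0}^{t}\mathbf{E}[\widehat{\sigma}(\widehat{X}_{u})^{2}1_{\{\widehat{X}_{u}=0\}}]\,\mathrm{d}u$ in which the lemma is invoked.
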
 
\begin{proof} 
Suppose the contrary.
Then  by the definition, we can choose a decreasing sequence
$y_{n} \geq 0$ such that
$y_{n}$ converges to zero and $X_{-t,0}^{-} (y_{n}) > 0$
with a positive probability.
However this contradicts to Lemma \ref{excit}.
\end{proof} 

From this lemma and the previous inequality, we conclude that
\begin{equation*}
\begin{split}
\lim_{\varepsilon \downarrow 0}
\mathbf{E}
\left[
	\left(
		\sum_{n=1}^{\infty}
		\int_{t \wedge \tau_{n-1}^{\varepsilon}}^{t \wedge \tau_{n}^{\varepsilon}}
		\widehat{\sigma} ( \widehat{X}_{u} )
		\mathrm{d} \widehat{w}^{(0)}(u)
		1_{ \{ \widehat{X}_{\tau_{n-1}^{\varepsilon}} \leq \varepsilon \} }
	\right)^{2}
\right]
= 0.
\end{split}
\end{equation*}
Similarly we have
\begin{equation*}
\begin{split}
\lim_{\varepsilon \downarrow 0}
\mathbf{E}
\left[
		\sum_{n=1}^{\infty}
		\int_{t \wedge \tau_{n-1}^{\varepsilon}}^{t \wedge \tau_{n}^{\varepsilon}}
		\vert \widehat{b} ( \widehat{X}_{u} ) \vert
		\mathrm{d} u
		1_{ \{ \widehat{X}_{\tau_{n-1}^{\varepsilon}} \leq \varepsilon \} }
\right]
= 0,
\end{split}
\end{equation*}
so that $R_{\varepsilon}$ converges to zero
in probability as $\varepsilon$ tends to zero.
Due to this convergence and \eqref{eq:1}, we can define
$\phi (t):=\lim_{\varepsilon \to 0} \phi^{\varepsilon} (t)$.
From the construction of $\phi(t)$ and the observation (i),
it is clear that
$\phi (t)$ is a non-negative process and
increases only when $\widehat{X}_{t} = 0$.
Consequently, we have
\begin{equation*}
\begin{split}
\widehat{X}_{t}
&=
x
+
\int_{0}^{t}
\widehat{\sigma} ( \widehat{X}_{u} )
1_{\{ \widehat{X}_{u} > 0 \}}
\mathrm{d} \widehat{w}^{(0)}(u)
+
\int_{0}^{t}
\widehat{b} ( \widehat{X}_{u} )
1_{\{ \widehat{X}_{u} > 0 \}}
\mathrm{d} u
+
\phi (t).
\end{split}
\end{equation*}
In particular, from Lemma \ref{Continuity}, $\phi (t)$ is continuous.
The proof of Theorem \ref{Principle}
(in the case where
$\sigma \vert_{(0,+\infty)}$
and
$b \vert_{(0,+\infty)}$
are bounded)
ends.


\begin{thebibliography}{99}


\bibitem{AW}
{\sc Akahori,~J.} and {\sc Watanabe,~S.}
(2002)
{\it On the strong solutions of stochastic differential equations.}
({\it in Japanese})
Social System Studies {\bf 4}, 1-12.


\bibitem{An}
{\sc Andres,~S.}
(2011)
{\it Pathwise differentiability for SDEs
in a smooth domain with reflection.}
Electron. J. Probab. {\bf 16}, no. 28, 845-879.


\bibitem{Bu}
{\sc Burdzy,~K.}
(2009)
{\it Differentiability of stochastic flow of reflected Brownian motions.}
Electron. J. Probab. 14, no. 75, 2182-2240.


\bibitem{ClSu}
{\sc Clifford,~P.} and {\sc Sudbury,~A.}
(1985)
{\it A sample path proof of the duality for stochastically monotone Markov processes.}
Ann. Probab. 13, no. 2, 558-565.


\bibitem{CR}
{\sc Cox,~J.~T.} and {\sc R\"osler,~U.}
(1984)
{\it A duality relation for entrance and exit laws for Markov processes.}
Stochastic Process. Appl. {\bf 16}, no. 2, 141-156. 


\bibitem{DFPS}
{\sc Dette,~H.}; {\sc Fill,~J.~A.};
{\sc Pitman,~J.} and {\sc Studden,~W.~J.}
(1997)
{\it Wall and Siegmund duality relations for birth and
death chains with reflecting barrier.}
Dedicated to Murray Rosenblatt.
J. Theoret. Probab. {\bf 10}, no. 2, 349-374. 


\bibitem{DZ}
{\sc Deuschel,~J.~D.} and {\sc Zambotti,~L.}
(2005)
{\it Bismut-Elworthy's formula and random walk representation for SDEs with reflection.}
Stochastic Process. Appl. 115, no. 6, 907-925. 


\bibitem{DF}
{\sc Diaconis,~P.} and {\sc Fill,~J.~A.}
(1990)
{\it Strong stationary times via a new form of duality.}
Ann. Probab. {\bf 18}, no. 4, 1483-1522. 


\bibitem{Gobet00}
{\sc Gobet,~E.}
(2000)
{\it Weak approximation of killed diffusion using Euler schemes.}
Stochastic Process. Appl. 87 167-197.


\bibitem{Ha76}
{\sc Harris,~T.~E.}
(1976)
{\it On a class of set-valued Markov processes.}
Ann. Probability {\bf 4}, no. 2, 175-194.


\bibitem{Ha78}
{\sc Harris,~T.~E.}
(1978)
{\it Additive set-valued Markov processes and graphical methods.}
Ann. Probability {\bf 6}, no. 3, 355-378.


\bibitem{HL}
{\sc Holley,~R.~A.} and {\sc Liggett,~T.~M.}
(1975)
{\it Ergodic theorems for weakly interacting infinite systems and the voter model.}
Ann. Probability {\bf 3}, no. 4, 643-663.


\bibitem{IW}
{\sc Ikeda,~N.} and {\sc Watanabe,~S.}
(1989)
{\it Stochastic differential equations and diffusion processes.}
Second edition.
North-Holland Mathematical Library, 24.
North-Holland Publishing Co.,
Amsterdam; Kodansha, Ltd., Tokyo. xvi+555 pp.
ISBN: 0-444-87378-3


\bibitem{JK}
{\sc Jansen,~S.} and {\sc Kurt,~N.}
(2014)
{\it On the notion(s) of duality for Markov processes.}
Probab. Surv. 11, 59-120.


\bibitem{Ja}
{\sc Jansons,~K.~M.}
(1996)
{\it Excursions into a new duality relation for diffusion processes.}
Electron. Comm. Probab. {\bf 1}, no. 7, 65-69 (electronic). 


\bibitem{KMc}
{\sc Karlin,~S.} and {\sc McGregor,~J.}
(1957)
{\it The classification of birth and death processes.}
Trans. Amer. Math. Soc. {\bf 86}, 366-400. 


\bibitem{Ku}
{\sc Kunita,~H.}
(1990)
{\it Stochastic flows and stochastic differential equations.}
Cambridge Studies in Advanced Mathematics, 24.
Cambridge University Press, Cambridge. xiv+346 pp. ISBN: 0-521-35050-6


\bibitem{Lev}
{\sc L\'evy,~P.}
(1948)
{\it Processus Stochastiques et Mouvement Brownien.}
 Suivi d'une note de M. Lo\`eve. (French)
Gauthier-Villars, Paris. 365 pp.


\bibitem{LS}
{\sc Lions,~P.~L.} and {\sc Sznitman,~A.~S.}
(1984)
{\it Stochastic differential equations with reflecting boundary conditions.}
Comm. Pure Appl. Math. {\bf 37}, no. 4, 511-537.



\bibitem{Pi04}
{\sc Pilipenko,~A.~Yu.}
(2004)
{\it Flows generated by stochastic equations with reflection.}
Random Oper. Stochastic Equations {\bf 12}, no. 4, 385-392. 


\bibitem{Pi05}
{\sc Pilipenko,~A.~Yu.}
(2005)
{\it Properties of flows generated by stochastic equations with reflection.}
(Russian. English, Ukrainian summary)
Ukra\"{\i}n. Mat. Zh. {\bf 57}, no. 8, 1069--1078;
translation in Ukrainian Math. J. {\bf 57}, no. 8, 1262-1274


\bibitem{Pi06a}
{\sc Pilipenko,~A.~Yu.}
(2006)
{\it On the generalized differentiability with initial data
of a flow generated by a stochastic equation with reflection.}
(Ukrainian. Ukrainian summary)
Teor. \u{I}mov\={\i}r. Mat. Stat. No. 75, 127--139;
translation in Theory Probab. Math. Statist. No. 75 (2007), 147-160.


\bibitem{Pi06b}
{\sc Pilipenko,~A.~Yu.}
(2006)
{\it Transfer of absolute continuity by a flow
generated by a stochastic equation with reflection.}
(Russian. English, Ukrainian summary)
Ukra\"{\i}n. Mat. Zh. {\bf 58}, no. 12, 1663--1673;
translation in Ukrainian Math. J. {\bf 58}, no. 12, 1891-1903.


\bibitem{Pi13}
{\sc Pilipenko,~A.}
(2013)
{\it Differentiability of stochastic reflecting flow
with respect to starting point.}
Commun. Stoch. Anal. {\bf 7}, no. 1, 17-37.


\bibitem{Sa}
{\sc Saisho,~Y.}
(1987)
{\it Stochastic differential equations for
multidimensional domain with reflecting boundary.}
Probab. Theory Related Fields {\bf 74}, no. 3, 455-477.


\bibitem{Si}
{\sc Siegmund,~D.}
(1976)
{\it The equivalence of absorbing and reflecting barrier
problems for stochastically monotone Markov processes.}
Ann. Probability {\bf 4}, no. 6, 914-924.


\bibitem{Sp}
{\sc Spitzer,~F.}
(1970)
{\it Interaction of Markov processes.}
Advances in Math. {\bf 5}, 246-290.


\bibitem{Ta}
{\sc Tanaka,~H.}
(1979)
{\it Stochastic differential equations with
reflecting boundary condition in convex regions.}
Hiroshima Math. J. {\bf 9}, no. 1, 163-177.


\bibitem{Wa07}
{\sc Warren,~J.}
(2007)
{\it Dyson's Brownian motions, intertwining and interlacing.}
Electron. J. Probab. 12, no. 19, 573-590.


\bibitem{WW}
{\sc Warren,~J.} and {\sc Watanabe,~S.}
(2004)
{\it On spectra of noises associated with Harris flows.}
Stochastic analysis and related topics in Kyoto, 351-373,
Adv. Stud. Pure Math.,
{\bf 41}, Math. Soc. Japan, Tokyo.


\bibitem{WaS}
{\sc Watanabe,~S.}
(2000)
{\it The stochastic flow and the noise associated
to Tanaka's stochastic differential equation.}
(English, Ukrainian summary)
Ukra\"{\i}n. Mat. Zh. {\bf 52}, no. 9, 1176--1193;
translation in Ukrainian Math. J. {\bf 52}, no. 9, 1346-1365.


\bibitem{Wi}
{\sc Williams,~D.}
(1991)
{\it Probability with martingales.}
Cambridge Mathematical Textbooks.
Cambridge University Press, Cambridge. xvi+251 pp.


\end{thebibliography}
\end{document}